\title{Semi-classical Heat Kernel Asymptotics and Morse Inequalities} 
\author{Eric Jian-Ting Chen}
\address{Institute of Mathematics, Academia Sinica, 6F, Astronomy-Mathematics Building, 
No. 1, Sec. 4, Roosevelt Road, Taipei 10617, Taiwan}
\email{as0210903@gate.sinica.edu.tw or ejtchen1997@gmail.com}
\newtheorem{theo}{\bf Theorem} [section]
\newtheorem{lem}[theo]{\bf Lemma} 
\newtheorem{coro}[theo]{\bf Corollary} 
\newtheorem{prop} [theo] {\bf Proposition} 
\theoremstyle{remark} 
\DeclareMathOperator\supp{supp}
\DeclareMathOperator\crit{Crit}
\DeclareMathOperator\ind{Ind}
\DeclareMathOperator\Ker{Ker}
\DeclareMathOperator\im{Im}
\DeclareMathOperator\spe{Spec}
\DeclareMathOperator\tr{tr}
\DeclareMathOperator\hess{Hess}
\DeclareMathOperator\dom{Dom}
\numberwithin{equation}{section}
\begin{document}

\begin{abstract}
In this paper, we study the asymptotic behavior of the heat kernel with respect to the Witten Laplacian. We introduce the localization and the scaling technique in semi-classical analysis, and study the semi-classical asymptotic behavior of the family of the heat kernel, indexed by $k$, near the critical point $p$ of a given Morse function, as $k\to \infty$. It is shown that this family is approximately close to the heat kernel with respect to a system of the harmonic oscillators attached to $p$. We also furnish some asymptotic results regarding heat kernels away from the critical points. These heat kernel asymptotic results lead to a novel proof of the Morse inequalities.
\end{abstract}

\maketitle

\tableofcontents

\allowdisplaybreaks 

\section{Introduction}\label{Intro}

In his marvellous paper \cite{W82}, Witten gave a new proof of the Morse inequalities by considering the  family of the so called Witten Laplacians $\Delta_{k}=\Delta+ k^{2}\left|df\right|^{2}+kA$, where $k>0$ is a parameter, $f$ is a Morse function, and $A$ is an operator of order $0$. He proved that the spectral functions of $\Delta_{k}$ is approximately close to the spectral functions of a system of harmonic oscillators attached to the critical point of $f$, as $k\to \infty$. His idea of studying deformed operators indexed by $k$ has led to several breakthrough in several fields.

In complex geometry, Demailly \cite{D85} discovered the holomorphic Morse inequalities that describe the $k$-large asymptotic upper bounds for the Morse-Witten sums of the Betti numbers of $\overline{\partial}$ on $L^{\otimes k}$ in terms of the Chern curvature of the Hermitian holomorphic line bundle $L$. The key of finding such inequalities is that he managed to localize the problem by converting the local frame of $L$ to the local weight function that plays a role of a Morse function in \cite{W82}. This in turn, led to the consideration of the family of operators $\Box_{k}$ analogous to $\Delta_{k}$. Meanwhile, Bismut gave the heat equation proofs of Morse inequalities in \cite{B86} and of Demailly's holomorphic Morse inequalities in \cite{B87}, using the probability theory. Subsequently, Demailly \cite{D91} replaced Bismut's probabilistic argument and recovered his holomorphic Morse inequalities by investigating the heat kernel asymptotics. In view of the recent progress, we feel it important to investigate the heat kernel asymptotics in Witten's classical setting.

In this present paper, we study the asymptotic behavior of the heat kernel with respect to the Witten Laplacian $\Delta_{k}$ and in turn recover the Morse inequalities. 

Let us briefly illustrate how we obtain our semi-classical heat kernel asymptotics (see Theorem \ref{Main Thm: Semi-classical Heat Kernel Asymptotics}). Our asymptotic behaviors of the heat kernel are achieved based on the two techniques: localization and scaling technique. We seek to localized the heat kernel near the critical point $p$ of the Morse function by constructing a metric that is flat around $p$ together with the Morse lemma. The localization of this kind was indeed motivated by Witten's work \cite{W82}. To obtain the asymptotics, we introduced the scaling technique in semi-classical analysis. This technique allows us to consider the family of the so-called scaled heat kernels near the critical point $p$, indexed by $k$, and study the asymptotic behavior of this family as $k\to \infty$. It turns out that this family is approximately close to the heat kernel with respect to a system of the harmonic oscillators attached to $p$ that resembles Witten's finding. We would like to stress that these two techniques effectively tackle the asymptotic behavior of heat kernels in a computable way. 

These techniques have been applied to several projects. In CR geometry, for example, Hsiao and Zhu \cite{HZ23} investigated the semi-classical asymptotics of the heat kernel with respect to Kohn Laplacian using these tricks and obtained the CR and $\mathbb{R}$-equivariant Morse inequalities. On the other hand, in complex geometry, Chiang \cite{C23} made use of these techniques and obtained the semi-classical asymptotic behaviors of Bergman kernels and spectral kernels. We refer the reader to \cite{B04}, \cite{MM07}, \cite{HM12}, \cite{HM14} for further related scaling techniques.

To recover the Morse inequalities from the heat kernel asymptotics, we use the classical trace integral formula (see \cite{MS67}) in the local index theory. It turns out that, somewhat surprisingly, we need more delicate asymptotic results regarding the heat kernels away from the critical points (see Theorems \ref{Main Thm: Asymptotic Behavior of Scaled Heat Kernel in between} and \ref{Main Thm: Asymptotic Behavior of Heat Kernel outside Critical Points}). Perhaps one of the causes of the difference from Hsiao and Zhu's work \cite{HZ23} would be that they investigate their semi-classical heat kernel asymptotics for each point in the underlying manifold, while we only consider the asymptotics of this kind near the critical point. These delicate results are obtained based on certain Bochner-type estimates.

\subsection{Statements of the Main Results}

In this subsection, we state our results in detail. We refer the reader to Section \ref{Prelim} and Subsections \ref{local.} and \ref{s.t.} for the terminologies we use. 

Let $M$ be a compact smooth manifold of dimension $n$ and let $f$ be a Morse function on $M$. We equip $M$ with a metric $g=\langle \cdot|\cdot\rangle$ such that for every critical point $p$ of $f$, we can choose a coordinate chart such that
$\langle \frac{\partial}{\partial x^{i}},\frac{\partial}{\partial x^{j}}\rangle=\delta^{i}_{j}$ and $f$ is written as a quadratic form (according to Morse Lemma) near $p$, where $\delta^{i}_{j}$ is the Kronecker delta (see Theorem \ref{Thm: Locally Flat Metric}).  For each $k>0$, denote the Witten Laplacian (acting on $r$-forms) by
\begin{equation*}
\begin{aligned}
\Delta_{k}^{\left(r\right)} := d_{k}d_{k}^{*}+d_{k}^{*}d_{k},
\end{aligned}
\end{equation*}
where $d_{k}:=e^{-kf}de^{kf}$ is the deformed exterior operator and $d_{k}^{*}$ is its formal adjoint with respect to the induced $L^{2}$-inner product from the metric $g$.  

For each critical point $p\in \crit\left(f\right)$, denote the scaled heat kernel by
\begin{equation*}
\begin{aligned}
A_{\left(k\right),p}^{r}\left(t,x,y\right) :=k^{-\frac{n}{2}} e^{-\frac{t}{k}\Delta_{k}^{\left(r\right)}}\left(\frac{x}{\sqrt{k}},\frac{y}{\sqrt{k}}\right),
\end{aligned}
\end{equation*}
where $e^{-t\Delta_{k}^{\left(r\right)}}\left(x,y\right)$ denotes the heat kernel with respect to $\Delta_{k}^{\left(r\right)}$ on $r$-forms.  Moreover, denote  by $e^{-t\Delta_{f,p}^{\left(r\right)}}\left(x,y\right)$ the heat kernel of the system of harmonic oscillators $\Delta_{f,p}^{\left(r\right)}$ attached to $p$ (see \eqref{Eq: Model Laplacian}). Then our semi-classical heat kernel asymptotics is stated as follows:

\begin{theo} \label{Main Thm: Semi-classical Heat Kernel Asymptotics} For each critical point $p\in \crit\left(f\right)$,
\begin{equation*}
\begin{aligned}
\lim_{k\to \infty} A_{\left(k\right),p}^{r}\left(t,x,y\right) = e^{-t\Delta_{f,p}^{\left(r\right)}}\left(x,y\right).
\end{aligned}
\end{equation*}
in $C^{\infty}$-topology in each compact subset of $\mathbb{R}^{+}\times \mathbb{R}^{n}\times \mathbb{R}^{n}$. Consequently, we obtain the following pointwise asymptotic  
\begin{equation*}
\begin{aligned}
\lim_{k\to \infty} k^{-\frac{n}{2}}e^{-\frac{t}{k}\Delta_{k}^{\left(r\right)}}\left(p,p\right) = e^{-t\Delta_{f,p}^{\left(r\right)}}\left(0,0\right).
\end{aligned}
\end{equation*}
\end{theo}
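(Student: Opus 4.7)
The plan is to combine the localization-and-rescaling argument outlined in the introduction with standard parabolic a priori estimates. Fix a critical point $p\in\crit(f)$ and, by Theorem \ref{Thm: Locally Flat Metric} and Morse's lemma, choose a coordinate chart around $p$ in which the metric is Euclidean and
\[
f(y)=f(p)+\tfrac{1}{2}\sum_{j=1}^{n}\lambda_{j}(y^{j})^{2},\qquad \lambda_{j}=\pm 1.
\]
In these coordinates $\Delta_{k}^{(r)}=\Delta+k^{2}|df|_{g}^{2}+kA$ is fully explicit: $|df|^{2}=\sum_{j}\lambda_{j}^{2}(y^{j})^{2}$, and $A$ is a zero-order endomorphism of $\Lambda^{r}T^{*}M$ whose value at $p$ is the Lefschetz-type operator built from $\hess f(p)$. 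Introducing the scaled coordinate $x=\sqrt{k}\,y$, the operator $k^{-1}\Delta_{k}^{(r)}$ reads, as a differential operator in $x$,
\[
\tilde{\Delta}_{k,p}^{(r)}=-\sum_{j}\partial_{x^{j}}^{2}+\sum_{j}\lambda_{j}^{2}(x^{j})^{2}+A\!\left(\tfrac{x}{\sqrt{k}}\right),
\]
which converges in $C^{\infty}_{\mathrm{loc}}(\mathbb{R}^{n})$ to the model harmonic oscillator $\Delta_{f,p}^{(r)}$ as $k\to\infty$.

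Next I would view $A_{(k),p}^{r}(t,x,y)$ as the fundamental solution of the rescaled parabolic system. A direct change of variables shows $(\partial_{t}+\tilde{\Delta}_{k,p}^{(r)})A_{(k),p}^{r}=0$ in the $x$-variable, with initial datum $\delta_{y}$. Because $e^{-s\Delta_{k}^{(r)}}$ is an $L^{2}$-contraction on $M$, standard Schwartz-kernel bounds yield uniform-in-$k$ $L^{2}_{\mathrm{loc}}$ estimates for $A_{(k),p}^{r}$ on any compact $K\subset\mathbb{R}^{+}\times\mathbb{R}^{n}\times\mathbb{R}^{n}$ lying inside the scaled chart (which is the case for all sufficiently large $k$). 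Iterating interior parabolic regularity for $\tilde{\Delta}_{k,p}^{(r)}$, whose coefficients stay uniformly bounded in $C^{\infty}_{\mathrm{loc}}$, then upgrades these to uniform $C^{\infty}(K)$-bounds.

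By Arzelà-Ascoli, every subsequence has a further sub-subsequence converging in $C^{\infty}_{\mathrm{loc}}(\mathbb{R}^{+}\times\mathbb{R}^{n}\times\mathbb{R}^{n})$ to some limit $B^{r}(t,x,y)$. Smooth convergence of the rescaled operators forces $(\partial_{t}+\Delta_{f,p}^{(r)})B^{r}=0$. To identify the initial datum I would pair against $\varphi\in C_{c}^{\infty}(\mathbb{R}^{n})$ and pass to the limit in the semigroup identity $\int A_{(k),p}^{r}(t,x,y)\varphi(y)\,dy=(e^{-t\tilde{\Delta}_{k,p}^{(r)}}\varphi)(x)$ by a Trotter--Kato type argument, obtaining $(e^{-t\Delta_{f,p}^{(r)}}\varphi)(x)$. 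By uniqueness of the fundamental solution of the model heat equation (explicitly given by a Mehler-type kernel), $B^{r}(t,x,y)=e^{-t\Delta_{f,p}^{(r)}}(x,y)$, so the full sequence converges and the pointwise corollary at $x=y=0$ is immediate. The principal obstacle is the uniform $C^{\infty}$ regularity: $\tilde{\Delta}_{k,p}^{(r)}$ carries an unbounded quadratic potential, so the bootstrap cannot be run by off-the-shelf Schauder theory, and one is driven to use the Bochner-type estimates alluded to in the introduction in order to control derivatives of $A_{(k),p}^{r}$ uniformly in $k$.
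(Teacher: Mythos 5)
Your overall strategy matches the paper's: localize via a flat metric and Morse normal form, rescale, obtain $k$-uniform local bounds on the scaled heat kernel, extract a convergent subsequence via Arzel\`a--Ascoli and a diagonal argument, and identify the limit by uniqueness of the model heat semigroup. The pointwise corollary is then immediate. This is exactly the route in Section \ref{A.B.S.H.K.}.

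However, there are two substantive issues with the execution. First, a minor but clarifying point: because the metric is made \emph{exactly} flat near $p$ and $f$ is \emph{exactly} quadratic (Theorem \ref{Thm: Locally Flat Metric}), the rescaled operator is not an operator with $k$-dependent zero-order coefficient $A(x/\sqrt{k})$ converging in $C^\infty_{\mathrm{loc}}$ to $\Delta_{f,p}^{(r)}$ --- it is \emph{identically equal} to $\Delta_{f,p}^{(r)}$ on $B_{k^{\varepsilon}}(0)$ (Proposition \ref{Prop: Scaling formula}). This exact intertwining is what lets one work with the fixed elliptic operator $\Delta_{f,p}^{(r)}$ throughout; your formulation introduces an unnecessary $k$-dependence.

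Second, and this is the genuine gap: the uniform local $C^\infty$ bound on $A_{(k),p}^r$ cannot be obtained from the $L^2$-contraction of $e^{-s\Delta_k^{(r)}}$ together with interior parabolic regularity and ``Bochner-type estimates.'' The $L^2$-contraction controls the operator on $L^2(M)$ but does not by itself give a $k$-uniform operator bound from $W^{-m}_c$ to $W^{m}_{\mathrm{loc}}$, which is what the Schwartz-kernel argument needs in order to yield a continuous kernel with $k$-uniform $C^l$ bounds. What actually produces this in the paper is the Mapping Property, Theorem \ref{Critical Thm: mapping property of s.h.o.}: $\|\tilde\chi A_{(k),p}^r(t)\chi\omega\|_{2m}\le C(\chi,\tilde\chi,t)\|\omega\|_{-2m}$, proved by combining G\aa rding's inequality for the elliptic operator $\Delta_{f,p}^{(r)}$ with the spectral theorem applied to $\Delta_k^{(r)}$ (Theorem \ref{Spectral Theorem}), the scaling identities (\ref{Scaling formula 1})--(\ref{Eq: Scaling formula for heat kernel}), and a duality argument. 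The crucial cancellation is that the $k^{n/2}$ factor arising from the change of variables exactly offsets the $k^{-n/2}$ in the definition of $A_{(k),p}^r$, and the spectral calculus controls $(s/k)^{4m}e^{-2ts/k}$ by a constant depending smoothly on $t$ but not on $k$. The Bochner-type estimates you invoke (Lemmas \ref{Lem: Bochner type estimate (Pointwise)} and \ref{Lem: Bochner-type Estimate}) are a different tool, used only in Section \ref{H.K.A.O.C.P.} for the off-critical-point asymptotics of Theorems \ref{Main Thm: Asymptotic Behavior of Scaled Heat Kernel in between} and \ref{Main Thm: Asymptotic Behavior of Heat Kernel outside Critical Points}, where the point is to exploit the positive lower bound on $|df|^2$ away from critical points; they play no role in the proof of Theorem \ref{Main Thm: Semi-classical Heat Kernel Asymptotics}. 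Also note that your worry about the quadratic potential obstructing Schauder theory is misplaced: on a fixed compact $K$ the coefficients of $\Delta_{f,p}^{(r)}$ are uniformly smooth, so local elliptic or parabolic regularity is fine once one has the starting estimate --- the difficulty is producing the $k$-uniform negative-to-positive Sobolev bound, not local regularity theory.
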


Theorem \ref{Main Thm: Semi-classical Heat Kernel Asymptotics} shows that the leading term of on-diagonal heat kernel expansion of $e^{-\frac{t}{k}\Delta_{k}^{\left(r\right)}}\left(x,y\right)$ at the critical point $p$ as $k$ large is given by $e^{-t\Delta_{f,p}^{\left(r\right)}}\left(0,0\right)$. We can further unwind $e^{-t\Delta_{f,p}^{\left(r\right)}}\left(x,x\right)$ by Mehler's formula and then capture the information of the critical point $p$ (see Theorem \ref{Thm: Trace integral for perturbed harmonic oscillator}).

Now, let us state the asymptotic results of heat kernels away from the critical points. Denote by $B_{R}\left(q\right)\subset \mathbb{R}^{n}$ an Euclidean ball centered at $q$ with radius $R$. For each $r$-form $\omega$ (on $\mathbb{R}^{n}$), let $\left|\omega\right|$ be the norm of $\omega$ induced from the (flat) metric $g$ (on $\mathbb{R}^{n}$). Define the norm $\left|\cdot\right|_{x}$  for the space of the linear transformations $\bigwedge^{r}T^{*}_{x}\mathbb{R}^{n}\otimes \left(\bigwedge^{r}T^{*}_{x}\mathbb{R}^{n}\right)^{*}$ by 
\begin{equation*}
\begin{aligned}
\left| S \right|_{x} := \sup_{\omega_{x}\in \bigwedge^{r}T^{*}_{x}\mathbb{R}^{n}, \omega_{x}\neq 0} \frac{ \left| S\omega_{x} \right| }{ \left| \omega_{x} \right|} 
\end{aligned}
\end{equation*}
for each $S \in \bigwedge^{r}T^{*}_{x}\mathbb{R}^{n}\otimes \left(\bigwedge^{r}T^{*}_{x}\mathbb{R}^{n}\right)^{*}$.

\begin{theo}\label{Main Thm: Asymptotic Behavior of Scaled Heat Kernel in between}
There exists $D>0$ such that if $k$ large enough, then for each $p\in \crit\left(f\right)$ and for each $N\in \mathbb{N}$, 
\begin{equation*}
\begin{aligned}
\left| A_{\left(k\right),p}^{r}\left(t,x,x\right) \right|_{x}  \leq  C\left(t,N\right)\left|x\right|^{-N},
\end{aligned}
\end{equation*}
where $C\left(t,N\right)$ depends on $N$ and smoothly on $t$ and is independent of $D,x,k$, for each $x\in B_{k^{\varepsilon}}\left(0\right) \setminus B_{2D}\left(0\right)$. 
\end{theo}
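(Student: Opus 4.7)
The plan is to exploit the confining quadratic potential that emerges upon rescaling to obtain weighted $L^{2}$ bounds on the scaled heat semigroup, and then to convert these into pointwise polynomial decay via Sobolev embedding. The cornerstone is an iterated Bochner-type inequality. Throughout, let $L_k := U_k(k^{-1}\Delta_k^{(r)})U_k^{-1}$ denote the generator of the semigroup whose kernel is $A^r_{(k),p}$, where $U_k\colon u \mapsto k^{-n/4}u(\cdot/\sqrt{k})$ is the scaling unitary. In the flat coordinate chart around $p$, the Weitzenb\"ock identity gives
\[ \Delta_k^{(r)} = \nabla^*\nabla + k^2|df|^2 + k\,\mathcal{A}, \]
where $\mathcal{A}$ is a bundle endomorphism bounded uniformly in $k$. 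The Morse normal form provides $|df(q)|^2 \geq c|q|^2$ for $q$ in the chart, and hence after conjugating by $U_k$ one gets $k\,|df(x/\sqrt{k})|^2 \geq c|x|^2$ for $x$ in the scaled region $B_{k^{\varepsilon}}(0)$, provided $k$ is large. Combining these yields the Bochner inequality
\[ \langle L_k\,u,\, u\rangle \geq \|\nabla u\|^2 + c\,\||x|u\|^2 - C\|u\|^2 \quad\text{for every } u \in C^\infty_c(B_{k^{\varepsilon}}(0)). \]

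Next, by induction on $N$ and by exploiting the commutator identities between multiplication by $|x|$, $\nabla$, and $L_k$, the Bochner inequality upgrades to the statement: for every $N\in\mathbb{N}$ and every multi-index $\alpha$, there exist an integer $M=M(N,\alpha)$ and a constant $C=C(N,\alpha)>0$, uniform in $k$ for $k$ large, such that
\[ \bigl\||x|^{N}\nabla^\alpha u\bigr\|_{L^2}^2 \leq C\,\bigl\|(L_k+C_0)^{M}u\bigr\|_{L^2}^2 \]
for every $u \in C^\infty_c(B_{k^{\varepsilon}}(0))$, where $C_0$ is chosen so that $L_k+C_0$ is positive.

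To convert this to a kernel bound, I apply the weighted estimate to $u = e^{-tL_k}v$ (with $v\in L^2$) and use the functional-calculus bound $\|(L_k+C_0)^{M}e^{-tL_k}\|_{L^2\to L^2}\leq C(t,M)$, which is smooth in $t>0$. This yields weighted $L^{2}$ estimates on the kernel $A^r_{(k),p}(t,\cdot,\cdot)$ and its derivatives in both arguments. For $x_0\in B_{k^{\varepsilon}}(0)\setminus B_{2D}(0)$, a localized Sobolev embedding on $B_1(x_0)\times B_1(x_0)$ --- on which $|x|,|z|\geq |x_0|/2$ --- converts these weighted $L^{2}$ bounds into the pointwise estimate $|A^r_{(k),p}(t, x_0, x_0)|_{x_0}\leq C(t, N)|x_0|^{-N}$, as desired.

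The main challenge is keeping the constants uniform in $k$ throughout the growing annulus $B_{k^{\varepsilon}}(0)\setminus B_{2D}(0)$. The condition $\varepsilon<\tfrac12$ built into the scaling framework is precisely what keeps the Taylor remainders such as $k|df(x/\sqrt{k})|^2-|x|^2$ and $\mathcal{A}(x/\sqrt{k})-\mathcal{A}(0)$ bounded on the region, and it ensures that $x/\sqrt{k}$ stays inside the Morse chart for all $x\in B_{k^{\varepsilon}}(0)$. Handling the fact that $e^{-tL_k}v$ need not be compactly supported inside $B_{k^\varepsilon}(0)$ requires an auxiliary cutoff together with a small-error argument provided by the localization technique of the paper.
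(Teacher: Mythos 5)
There is a genuine gap at the heart of the proposal. You set up the argument around an operator $L_k := U_k(k^{-1}\Delta_k^{(r)})U_k^{-1}$ and treat $A^r_{(k),p}(t)$ as the heat semigroup $e^{-tL_k}$ of a globally defined self-adjoint operator on $L^2(\mathbb{R}^n)$, then invoke the functional-calculus bound $\|(L_k+C_0)^M e^{-tL_k}\|_{L^2\to L^2} \leq C(t,M)$. But $\Delta_k^{(r)}$ acts on $L^2_r(M)$, not $L^2_r(\mathbb{R}^n)$, so the conjugation by the $\mathbb{R}^n$-scaling $U_k$ is not literally meaningful; $A^r_{(k),p}(t,x,y)$ is merely the pull-back of a kernel on $M\times M$ to a coordinate ball, and it is \emph{not} the kernel of $e^{-t\Delta_{f,p}^{(r)}}$ (if it were, Theorem~\ref{Main Thm: Semi-classical Heat Kernel Asymptotics} would be trivial). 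Consequently your weighted estimate $\||x|^N\nabla^\alpha u\| \lesssim \|(L_k+C_0)^M u\|$, which is stated for $u\in C_c^\infty(B_{k^\varepsilon}(0))$, cannot simply be applied to $u=e^{-tL_k}v$: that output is not compactly supported in the chart, and its generator equals the model operator only where the Morse normal form holds.

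You flag this issue in your last sentence as something fixable by ``an auxiliary cutoff together with a small-error argument,'' but this is in fact the crux of the paper's proof, not a minor detail. If you cut off at a \emph{fixed} scale (say $B_{k^\varepsilon}(0)$), the commutator $[L_k^M,\chi]$ picks up derivatives of $\chi$ of size $k^{-\varepsilon}$, and the resulting errors decay in $k$ but not in $|x|$ --- you would get a $k^{-N}$ type bound (in the spirit of Theorem~\ref{Main Thm: Asymptotic Behavior of Heat Kernel outside Critical Points}) rather than the required $|x|^{-N}$. The paper instead constructs an $x$\emph{-dependent} cut-off $\chi_{x,k}$ supported in an annulus $A_x^k$ of the ball of radius $k^{-1/2+\varepsilon}$ centred at $p$, at radial scale $\sim |x|k^{-1/2}$, with $\sup|D^\alpha\chi_{x,k}|\lesssim|x|^{-|\alpha|}k^{|\alpha|/2}$ (see \eqref{Ineq: Supnorm of chi_x,k}). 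Combined with the Bochner estimate $(\Delta_k^{(r)}\omega|\omega)\geq Ck|x|^2\|\omega\|^2$ on that annulus (Lemma~\ref{Lem: Bochner type estimate (Pointwise)}), each integration by parts nets a factor $|x|^{-1}$; iterating and closing the loop via Theorem~\ref{Spectral Theorem} applied to the honest operator $\Delta_k^{(r)}$ on the compact manifold $M$ then gives Theorem~\ref{Thm: L^2 estimate bdd by |x|^-N w/o diff.}, Corollary~\ref{Cor: L^2 estimate bdd by |x|^-N}, and the mapping property of Theorem~\ref{Thm: Mapping property outside criti pts 2}, and the pointwise bound follows from the approximate-identity/Sobolev argument you also describe. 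So the overall \emph{shape} of your strategy (Bochner confinement, weighted bounds, Sobolev embedding) matches the paper, but the weighted estimate you propose has no honest domain to live on, and the indispensable mechanism --- the $x$-scaled annular cut-off and the interplay between the $|x|^{-1}k^{1/2}$ cost of differentiating it and the $(k|x|^2)^{-1}$ gain from Bochner --- is missing.
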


For each large $k$, set $\mathcal{U}^{k}= \bigcup_{p\in \crit\left(f\right)} U_{p}^{k}$, where $U_{p}^{k}$ is identified with $B_{k^{-\frac{1}{2}+\varepsilon}} \left(0\right)$, $\varepsilon\in \left(0,\frac{1}{2}\right)$, under the coordinate chart of $p\in \crit\left(f\right)$. 

\begin{theo} \label{Main Thm: Asymptotic Behavior of Heat Kernel outside Critical Points}
If $k$ is sufficiently large, then for each $t>0$ and for each $N\in \mathbb{N}$, 
\begin{equation*}
\begin{aligned}
\left\| e^{-\frac{t}{k}\Delta_{k}^{\left(r\right)}}\left(x,x\right)\right\|_{\mathcal{C}^{0}\left(M\setminus \mathcal{U}^{k} \right)} \leq C\left(t,N\right)k^{-N},
\end{aligned}
\end{equation*}
where $C\left(t,N\right)$ depends on $N$ and smoothly on $t$ and is independent of $k$ and the $\mathcal{C}^{0}$-norm is determined by a choice of partition of unity and orthonormal frame. This implies
\begin{equation*}
\begin{aligned}
\lim_{k\to\infty} e^{-\frac{t}{k}\Delta_{k}^{\left(r\right)}}\left(x,x\right) = 0
\end{aligned}
\end{equation*}
for each $x \in M\setminus \crit\left(f\right)$.
\end{theo}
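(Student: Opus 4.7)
The plan is to derive a pointwise Bochner inequality from the Weitzenb\"ock formula and combine it with a cutoff iteration argument, exploiting that on $M\setminus\mathcal{U}^{k}$ the Morse lemma forces $|df|^{2}\gtrsim k^{-1+2\varepsilon}$, so that the Schr\"odinger potential $k^{2}|df|^{2}$ hidden in $\Delta_{k}^{(r)}$ yields an effective mass of order $k^{2\varepsilon}$ after the time rescaling $t\mapsto t/k$. Writing the Weitzenb\"ock decomposition $\Delta_{k}^{(r)} = \nabla^{*}\nabla + k^{2}|df|^{2} + k\mathcal{A}$, where $\mathcal{A}$ is a bounded zero-order algebraic operator depending on $\hess f$ and the curvature of $(M,g)$, for the heat flow $\omega(t,\cdot) := e^{-\frac{t}{k}\Delta_{k}^{(r)}}\omega_{0}$ one derives the pointwise inequality
\begin{equation*}
\bigl(\partial_{t} + \tfrac{1}{k}\Delta\bigr)|\omega|^{2} + \tfrac{2}{k}|\nabla\omega|^{2} + 2k|df|^{2}|\omega|^{2} \leq C|\omega|^{2},
\end{equation*}
with $C$ independent of $k$.

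I would then choose a cutoff $\chi_{0}$ with $\chi_{0}\equiv 1$ on $M\setminus\mathcal{U}^{k}$ and $\chi_{0}\equiv 0$ on a slightly shrunken neighborhood of $\crit(f)$, with transition layer of width $\sim k^{-1/2+\varepsilon}$, so $|d\chi_{0}|\lesssim k^{1/2-\varepsilon}$. Multiplying the pointwise inequality by $\chi_{0}^{2}$, integrating over $M$, and using integration by parts on the $\Delta|\omega|^{2}$ term (whose Cauchy--Schwarz loss from the gradient of $\chi_{0}$ is absorbed by the $\frac{2}{k}|\nabla\omega|^{2}$ term on the left) yields the differential inequality
\begin{equation*}
\frac{d}{dt}\int_{M}\chi_{0}^{2}|\omega|^{2}\,dv \leq -c\,k^{2\varepsilon}\int_{M}\chi_{0}^{2}|\omega|^{2}\,dv + \frac{C}{k}\int_{M}|d\chi_{0}|^{2}|\omega|^{2}\,dv.
\end{equation*}
Gronwall combined with the $L^{2}$-contractivity $\|\omega(t)\|_{L^{2}}\leq \|\omega_{0}\|_{L^{2}}$ gives a first-pass decay $\int\chi_{0}^{2}|\omega(t)|^{2} = O(k^{-4\varepsilon})\|\omega_{0}\|^{2}$. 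To upgrade to any polynomial order $k^{-N}$, I would iterate with a telescoping chain $\chi_{0}\prec\chi_{1}\prec\cdots\prec\chi_{J}$ of cutoffs supported in progressively smaller neighborhoods of $\crit(f)$; at each step the inhomogeneous term $\int|d\chi_{j}|^{2}|\omega|^{2}$ is supported in $\{\chi_{j+1}=1\}$, so is controlled by the previously improved $\int\chi_{j+1}^{2}|\omega|^{2}$, gaining an extra factor $k^{-4\varepsilon}$ per step.

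Finally, the $L^{2}$-decay of $e^{-\frac{t}{k}\Delta_{k}^{(r)}}(\cdot,y)$ on a neighborhood of any $x_{0}\in M\setminus\mathcal{U}^{k}$ is promoted to the claimed pointwise $\mathcal{C}^{0}$-bound via Sobolev embedding: higher Sobolev norms of the smooth heat kernel are controlled by applying powers of $\Delta_{k}^{(r)}$, which by spectral calculus costs only polynomial factors in $k$, so the super-polynomial decay survives. The main obstacle will be orchestrating the iteration: each cutoff $\chi_{j}$ must simultaneously (i) equal $1$ on $M\setminus\mathcal{U}^{k}$, (ii) vanish on a small enough neighborhood of $\crit(f)$ for the Morse lower bound $|df|^{2}\geq c\,k^{-1+2\varepsilon}$ to persist on its support, and (iii) keep $|d\chi_{j}|$ controlled up to constants growing only geometrically in $j$. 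Since the shrinking is by a constant factor per step, one may iterate for $O(\log k)$ steps, more than enough for any prescribed polynomial decay.
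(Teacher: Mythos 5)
Your proposal is correct in its core idea but takes a genuinely different route from the paper. The paper's proof proceeds through a sequence of operator-theoretic lemmas: it establishes a \emph{static} Bochner-type quadratic form inequality $(\Delta_k^{(r)}\omega \,|\, \omega) \geq Ck^{1+2\varepsilon}\|\omega\|^2$ for $\omega$ supported in $M\setminus\mathcal{U}^k$ (Lemma \ref{Lem: Bochner-type Estimate}), then applies this iteratively to $\chi_k e^{-\frac{t}{k}\Delta_k^{(r)}}\omega$, splitting each application via Leibniz rules for $d_k(\chi_k\cdot)$ and $d_k^*(\chi_k\cdot)$, and controlling the resulting terms with the spectral theorem ($\lambda^m e^{-t\lambda/k}\lesssim (k/t)^m$). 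This gives a Sobolev mapping property from $W^{-2m}$ to $W^{2m}$ with $O(k^{-N})$ gain (Theorem \ref{Thm: Mapping propety outside criti pts 1}), from which the pointwise bound follows by feeding in approximate identities and using Sobolev embedding. By contrast, you work at the \emph{parabolic} level: you derive the pointwise differential inequality $(\partial_t + \frac{1}{k}\Delta)|\omega|^2 + \frac{2}{k}|\nabla\omega|^2 + 2k|df|^2|\omega|^2 \leq C|\omega|^2$ directly from the Weitzenb\"ock decomposition, integrate against $\chi_0^2$, absorb the cross-term from $\nabla\chi_0$ into the $\frac{2}{k}|\nabla\omega|^2$ term by Young's inequality, and then invoke Gronwall. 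The subsequent upgrade to $k^{-N}$ decay is achieved by a telescoping chain of cutoffs, each controlling the inhomogeneous Gronwall source of the previous one. This is a legitimate and more energy-method-flavored alternative to the paper's spectral-calculus iteration; both exploit the same Schr\"odinger potential lower bound $|df|^2\gtrsim k^{-1+2\varepsilon}$ on $M\setminus\mathcal{U}^k$, but the paper iterates the quadratic-form inequality whereas you iterate the parabolic Gronwall estimate.

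Two small cautions. First, the telescoping iteration must use a \emph{fixed} number of cutoffs $J=J(N)$ depending on $N$ but not $k$; your remark about iterating $O(\log k)$ steps is unnecessary and would actually threaten the $k$-independence of the constant $C(t,N)$, since the cutoff derivative constants and the Bochner constants both degrade geometrically with the iteration index. Since each step gains $k^{-4\varepsilon}$, $J = \lceil N/(4\varepsilon)\rceil$ suffices and keeps all constants bounded. Second, the final $L^2$-to-$\mathcal{C}^0$ promotion is glossed over: your estimate gives an $L^2\to L^2$ operator bound for $\chi_0 e^{-\frac{t}{k}\Delta_k^{(r)}}\chi_0$, but to bound the kernel pointwise one still needs a negative-to-positive Sobolev mapping property and an approximate-identity argument as in the paper (or equivalently a duality/square-root trick using the semigroup property), so this step should be spelled out rather than attributed to ``Sobolev embedding'' alone.
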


From Theorems \ref{Main Thm: Semi-classical Heat Kernel Asymptotics}, \ref{Main Thm: Asymptotic Behavior of Scaled Heat Kernel in between}, and \ref{Main Thm: Asymptotic Behavior of Heat Kernel outside Critical Points}, we recover the Morse inequalities (See section \ref{M.I.}). In fact, from these three theorems, we can deduce
\begin{equation*}
\begin{aligned}
\lim_{t\to \infty} \lim_{k\to \infty} \int_{M} \tr e^{-\frac{t}{k}\Delta_{k}^{\left(r\right)}}\left(x,x\right) dV = m_{r},
\end{aligned}
\end{equation*}
where $m_{r}$ is the number of the critical points of $f$ with index $r$ (see Theorem \ref{Main Thm: Critical point info captured by the asymptotic behaviors}), which together with the local index theory, establishes the Morse inequalities. We refer the reader to \cite{M63} for a topological proof of the Morse inequalities.  

This paper is organized as follows: In Section \ref{Prelim}, we set up some notations and review some essential notions and theorems. In Section \ref{A.B.S.H.K.}, we introduce the localization and scaling technique, and prove Theorem \ref{Main Thm: Semi-classical Heat Kernel Asymptotics}. Later on, in Section \ref{H.K.A.O.C.P.}, we introduce the Bochner-type estimate and prove Theorems \ref{Main Thm: Asymptotic Behavior of Scaled Heat Kernel in between} and \ref{Main Thm: Asymptotic Behavior of Heat Kernel outside Critical Points}. Finally, in Section \ref{M.I.}, we investigate the model kernel and present the new heat kernel proof of the Morse inequalities.

\section{Preliminaries}\label{Prelim}

\subsection{Notations and Terminologies}\label{nota}

Let $\alpha=\left(\alpha_{1},\cdots,\alpha_{n}\right)$ be a multi-index and we set $\left| \alpha\right|=\sum_{i=1}^{n}\alpha_{i}$ and put
\begin{equation*}
\begin{aligned}
\partial^{\alpha}_{x} = \frac{\partial^{\left|\alpha\right|}}{\left( \partial x^{1}\right)^{\alpha_{1}} \cdots \left(\partial x^{n}\right)^{\alpha_{n}} },
\end{aligned}
\end{equation*}
where $x=\left(x^{1},\cdots, x^{n}\right)\in \mathbb{R}^{n}$.

Let $M$ be a smooth manifold of dimension $n$. We denote by $TM$ the tangent bundle of $M$ and by $T^{*}M$ the cotangent bundle of $M$. We say a (differential) $r$-form $\omega$ to be a section (not necessarily smooth) of the exterior bundle $\bigwedge^{r} \left(T^{*}M\right)$. Choosing a local coordinate chart with the coordinates $x^{1},\cdots,x^{n}$, we can locally write $\omega$ as $\omega=\sideset{}{'}\sum_{I} \omega_{I} dx^{ I }$, where the primed sum $\sideset{}{'}\sum_{I}$ refers to the one that runs over all multi-indices $I=\left(i_{1},\cdots, i_{r}\right)$ with $\left|I\right|=r$ arranged in increasing order (namely, $1\leq i_{1}<\cdots< i_{r} \leq n$), and where $ \omega_{ I } $ are component functions of $\omega$ and $dx^{I}=dx^{i_1} \wedge \cdots \wedge dx^{i_r}$. In particular, let $U\subset M$ be an open subset, and we denote by $\Omega^{r}\left(U\right) = \mathcal{C}^{\infty}\left(U,\bigwedge^{r}\left(T^{*}M\right)\right)$ the space of smooth $r$-forms and by $\Omega_{c}^{r}\left(U\right)$ the space of all smooth $r$-forms compactly supported in $U$. We denote by $H_{dR}^{r}\left(M\right)$ the $r$-th de Rham cohomology group on $M$.

Let $\left(M,g\right)$ be an orientable Riemannian manifold endowed with the metric $g\left(\cdot,\cdot\right)=\langle \cdot|\cdot\rangle$ and let $dV$ be the volume form induced by $g$. We denote by $\left|\cdot\right|$ the norm associated to the metric $g$.
Note that $g$ induces the metric on $\bigwedge^{r}\left(T^{*}M\right)$ (still denoted as $g$ and its associated norm is also denoted as $\left|\cdot\right|$), and thus the $L^{2}$-inner product $\left(\cdot|\cdot\right)$ on $\Omega^{r}_{c}\left(M\right)$ with respect to $dV$. The completion of $\Omega^{r}_{c}\left(M\right)$ with respect to $\left(\cdot|\cdot\right)$ is denoted by $L^{2}_{r}\left(M\right)$. We denote the associated $L^{2}$-norm on $M$ by $\left\| \cdot\right\|_{L^{2}\left(M\right)}$, and we will omit the subscript $L^{2}\left(M\right)$ if there is no ambiguity. Let $d$ be the exterior derivative and denote by $d^{*}$ the formal adjoint of $d$ with respect to 
$\left(\cdot|\cdot\right)$.

Let $f$ be a Morse function. The set consisting of all critical points of $f$ is denoted by $\crit \left(f\right)$. The index of $f$ at $p\in \crit \left(f\right)$ is denoted by $\ind_{f} p$. Besides, the $j$-th Morse number $m_{j}$ is defined to be the number of the set $\left\{ p\in \crit\left(f\right):\ind_{f} p=j\right\}$.

We give a final remark on the appearance of constants. Throughout this paper, $C\left(\cdot\right)$ denotes a constant that depends on what appears within the parenthesis.

\subsection{Witten Laplacians and Heat Kernels}\label{w.l.}

Let $M$ be a compact Riemannian manifold of dimension $n$ and let $f$ be a Morse function on $M$. For each $k> 0$, define the deformed exterior derivative $d_{k}: \Omega^{r}\left(M\right)\to\Omega^{r+1}\left(M\right)$ by
\begin{equation*}
\begin{aligned}
	d_{k} := e^{-kf} d e^{kf} = d+kdf\wedge.
\end{aligned}
\end{equation*}

It is evident to see that $d_{k}^{2}=0$. Define the deformed $r$-th de Rham cohomology group on $M$ by
\begin{equation*}
\begin{aligned}
H_{k}^{r}\left(M\right)
=\frac{ \Ker\left(d_{k} : \Omega^{r}\left(M\right)\to \Omega^{r+1}\left(M\right)\right)}{\im\left(d_{k} : \Omega^{r-1}\left(M\right)\to \Omega^r\left(M\right)\right)}.
\end{aligned}
\end{equation*}

\begin{prop}[\cite{M}, Theorem 2.3]\label{Prop: Isomorphism between two coho. gps}
For each $k> 0$, $H_{k}^{r}\left(M\right)$ is isomorphic to $H_{dR}^{r}\left(M\right)$.
\end{prop}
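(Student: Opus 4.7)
The plan is to exhibit an explicit isomorphism of cochain complexes between $(\Omega^\bullet(M),d_k)$ and the usual de Rham complex $(\Omega^\bullet(M),d)$, which will descend to an isomorphism on cohomology. The natural candidate is multiplication by $e^{kf}$, motivated directly by the definition $d_k = e^{-kf} d\, e^{kf}$.

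More precisely, I would define the map
\begin{equation*}
\Phi_{k}^{r}\colon \Omega^{r}(M) \to \Omega^{r}(M), \qquad \Phi_{k}^{r}(\omega) := e^{kf}\omega.
\end{equation*}
Since $e^{kf}$ is a strictly positive smooth function, $\Phi_{k}^{r}$ is a linear isomorphism of $\Omega^{r}(M)$ with inverse $\omega \mapsto e^{-kf}\omega$. The key identity to verify is the chain map property
\begin{equation*}
d \circ \Phi_{k}^{r} = \Phi_{k}^{r+1} \circ d_{k},
\end{equation*}
which is immediate from unwinding the definition: for $\omega \in \Omega^{r}(M)$,
\begin{equation*}
\Phi_{k}^{r+1}(d_{k}\omega) = e^{kf}\bigl(e^{-kf} d(e^{kf}\omega)\bigr) = d(e^{kf}\omega) = d(\Phi_{k}^{r}\omega).
\end{equation*}

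Consequently, $\Phi_{k}^{r}$ restricts to an isomorphism $\Ker(d_{k}|_{\Omega^{r}}) \xrightarrow{\sim} \Ker(d|_{\Omega^{r}})$ and sends $\im(d_{k}|_{\Omega^{r-1}})$ isomorphically onto $\im(d|_{\Omega^{r-1}})$. Therefore it descends to a well-defined linear isomorphism
\begin{equation*}
\overline{\Phi}_{k}^{r} \colon H_{k}^{r}(M) \xrightarrow{\;\sim\;} H_{dR}^{r}(M),
\end{equation*}
which is the desired result.

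There is no genuine obstacle here; the only thing to be careful about is bookkeeping the chain-map identity with the correct powers of $e^{\pm kf}$ and verifying that the inverse $\omega \mapsto e^{-kf}\omega$ is a two-sided inverse on both the kernels and the images in each degree. Everything else follows from the general fact that a cochain isomorphism induces a cohomology isomorphism.
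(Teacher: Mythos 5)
Your proof is correct and is the standard argument: conjugation by $e^{kf}$ is a cochain-complex isomorphism from $(\Omega^{\bullet}(M),d_{k})$ to $(\Omega^{\bullet}(M),d)$, hence induces an isomorphism on cohomology. The paper itself gives no proof but merely cites \cite{M}, Theorem 2.3, and the argument there is essentially this one; you have filled in precisely the expected reasoning. One small stylistic point: once you have verified the chain-map identity $d\circ \Phi_{k}^{r}=\Phi_{k}^{r+1}\circ d_{k}$ and observed that each $\Phi_{k}^{r}$ is a linear isomorphism, the restriction to kernels and images and the passage to cohomology is automatic (a cochain isomorphism always induces a cohomology isomorphism), so the final paragraph about ``bookkeeping'' and ``two-sided inverses on kernels and images'' is not a genuine additional concern.
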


Let $d_{k}^{*}:\Omega^{r}\left(M\right)\to \Omega^{r-1}\left(M\right)$ be the formal adjoint of $d_{k}$ with respect to the $L^{2}$-inner product $\left(\cdot|\cdot\right)$. Then
\begin{equation*}
\begin{aligned}
d_{k}^{*}=e^{kf}d^{*}e^{-kf}= d^{*}+ k\iota_{\nabla f},
\end{aligned}
\end{equation*}
where $d^{*}=\left(-1\right)^{n\left(r+1\right)+1} \ast d\ast$, $\ast$ is the Hodge star operator, and $\nabla f$ is the gradient of $f$.

For each $k>0$, the Witten Laplacian on $\Omega^{r}\left(M\right)$ is defined to be
\begin{equation*}
\begin{aligned}
\Delta_{k}^{\left(r\right)} := d_{k}^{*}d_{k}+d_{k}d_{k}^{*}: \Omega^{r}\left(M\right)\to \Omega^{r}\left(M\right).
\end{aligned}
\end{equation*}
By direct computation, we obtain
\begin{equation}\label{Eq: Global expression of Witten Laplacian}
\begin{aligned}
\Delta_{k}^{\left(r\right)} 
=\Delta^{\left(r\right)}  + k^{2}\left|df\right|^{2}
+ k\left( \mathcal{L}_{\nabla f} + \mathcal{L}_{\nabla f}^{*}\right),
\end{aligned}
\end{equation}
where $\mathcal{L}_{\nabla f}$ denotes the Lie derivative of $\nabla f$ and $\mathcal{L}_{\nabla f}^{*}$ the formal adjoint of $\mathcal{L}_{\nabla f}$ with respect to the $L^{2}$-inner product $\left(\cdot|\cdot\right)$.
Note that $\Delta_{k}^{\left(r\right)}$ is elliptic. Moreover, $\Delta_{k}^{\left(r\right)}$ has the  local expression
\begin{equation}\label{Eq: Local expression of Witten Laplacian}
\begin{aligned}
\Delta_{k}^{\left(r\right)} = \Delta^{\left(r\right)} + k^{2}\left|df\right|^{2}+k\sum_{i,j} \hess_{f} \left(\frac{\partial}{\partial x^{i}},\frac{\partial}{\partial x^{j}}\right) \left[ dx^{j}\wedge,\iota_{\left(dx^{i}\right)^{\sharp}}\right],
\end{aligned}
\end{equation}
where $\hess_{f}=\nabla^{TM}df$ is the Hessian form with respect to the Levi-Civita connection $\nabla^{TM}$ and $\left(dx^{i}\right)^{\sharp}$ is the dual element to $dx^{i}$ with respect to the inner product $\langle\cdot|\cdot\rangle$.

We extend the Witten Laplaican to $\Delta_{k}^{\left(r\right)}:\dom \Delta_{k}^{\left(r\right)}\to L^{2}_{r}\left(M\right),$ where
\begin{equation*}
\begin{aligned}
	\dom \Delta_{k}^{\left(r\right)}=\left\{ \omega \in L^{ 2 }_{ r }\left( M \right) : \Delta_{k}^{\left( r\right)} \omega \in L^{2}_{r} \left( M\right) \right\}.
\end{aligned}
\end{equation*}
Note that $ \Omega^{r}\left(M\right)\subset \dom \Delta_{k}^{\left(r\right)}$ is dense in $L^{2}_{r}\left(M\right)$. Also, denote the adjoint of $\Delta_{k}^{\left(r\right)}$ with respect to the $L^{2}$-inner product by $\Delta_{k}^{\left( r\right) *}:\dom \Delta_{k}^{\left(r\right)*} \to L^{2}_{r}\left(M\right)$, where $\dom \Delta_{k}^{\left(r\right)*}$ consists of elements $\omega$ in $L^{2}_{r}\left(M\right)$ for which there exists a constant $C>0$ such that
\begin{equation*}
\begin{aligned}
	\left| \left(\omega|\Delta_{k}^{ \left( r \right) } \eta\right)\right| \leq C\left\| \eta \right\|
\end{aligned}
\end{equation*}
for each $\eta\in \dom \Delta_{k}^{\left(r\right)}$. In fact, we can see that $\Delta_{k}^{\left(r\right)}$ is self-adjoint and non-negative.

Let $\spe \Delta_{k}^{\left(r\right)}$  be the spectrum of $\Delta_{k}^{\left(r\right)}$ and  $E_{\lambda,k}^{\left(r\right)}\left(M\right)$  be the eigenspace of $\Delta_{k}^{\left(r\right)}$ corresponding to the eigenvalue $\lambda$. We have the following property related to the alternative sum of the dimensions of the eigenspaces. 

\begin{prop} \label{Prop: Alternative sum related to eigenspaces}
For each $k$, for each $r$, and for each $\mu\in \spe \Delta_{k}^{\left(r\right)}\setminus \left\{0\right\}$, we have
\begin{equation*}
\begin{aligned}
	\sum_{j=0}^{r}\left(-1\right)^{r-j}\dim E_{\mu,k}^{\left(j\right)}\left(M\right)=\dim d_{k} \left( E_{\mu,k}^{\left(r\right)}\left(M\right) \right),
\end{aligned}
\end{equation*}
where $d_{k}\left( E_{\mu,k}^{\left(r\right)}\left(M\right) \right)=\left\{d_{k} \omega:\omega\in E_{\mu,k}^{\left(r\right)}\left(M\right) \right\}$. Subsequently, we obtain for each $r$, 
\begin{equation*}
\begin{aligned} 
\sum_{j=0}^{r}\left(-1\right)^{r-j}\dim E_{\mu,k}^{\left(j\right)}\left(M\right)\geq 0,
\end{aligned}
\end{equation*}
and if $r=n$, the equality occurs; namely,
\begin{equation*}
\begin{aligned} 
\sum_{j=0}^{n}\left(-1\right)^{n-j}\dim E_{\mu,k}^{\left(j\right)}\left(M\right)= 0.
\end{aligned}
\end{equation*}
\end{prop}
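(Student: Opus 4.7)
The plan is to establish, for each non-zero eigenvalue $\mu$, a Hodge-style exact complex of the eigenspaces, and then do dimension bookkeeping. The starting observation is that $d_{k}^{2}=0$ together with the definition $\Delta_{k}^{\left(r\right)}=d_{k}d_{k}^{*}+d_{k}^{*}d_{k}$ force $d_{k}\Delta_{k}^{\left(r\right)}=\Delta_{k}^{\left(r+1\right)}d_{k}$ (and likewise for $d_{k}^{*}$). Consequently, $d_{k}$ restricts to a linear map $d_{k}\colon E_{\mu,k}^{\left(j\right)}\left(M\right)\to E_{\mu,k}^{\left(j+1\right)}\left(M\right)$, and these restrictions assemble into a finite cochain complex of finite-dimensional spaces.

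My first step will be to verify that, when $\mu\neq 0$, this complex is exact. If $\omega\in E_{\mu,k}^{\left(j\right)}\left(M\right)$ with $d_{k}\omega=0$, then
\begin{equation*}
\mu\,\omega=\Delta_{k}^{\left(j\right)}\omega=d_{k}d_{k}^{*}\omega+d_{k}^{*}d_{k}\omega=d_{k}d_{k}^{*}\omega,
\end{equation*}
so $\omega=d_{k}\bigl(\mu^{-1}d_{k}^{*}\omega\bigr)$, with $\mu^{-1}d_{k}^{*}\omega\in E_{\mu,k}^{\left(j-1\right)}\left(M\right)$ by the commutation above. For $j=0$ the same identity gives $\omega=0$, so $d_{k}$ is injective on $E_{\mu,k}^{\left(0\right)}\left(M\right)$. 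Thus $\Ker\bigl(d_{k}|_{E_{\mu,k}^{\left(j\right)}}\bigr)=d_{k}\bigl(E_{\mu,k}^{\left(j-1\right)}\left(M\right)\bigr)$ for $j\geq 1$, and is $0$ for $j=0$.

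The second step is the dimension count. The rank-nullity theorem applied to each $d_{k}\colon E_{\mu,k}^{\left(j\right)}\left(M\right)\to E_{\mu,k}^{\left(j+1\right)}\left(M\right)$ combined with the exactness above yields
\begin{equation*}
\dim E_{\mu,k}^{\left(j\right)}\left(M\right)=\dim d_{k}\bigl(E_{\mu,k}^{\left(j-1\right)}\left(M\right)\bigr)+\dim d_{k}\bigl(E_{\mu,k}^{\left(j\right)}\left(M\right)\bigr)
\end{equation*}
for $j\geq 1$ (with the first summand interpreted as $0$ when $j=0$). Substituting into $\sum_{j=0}^{r}\left(-1\right)^{r-j}\dim E_{\mu,k}^{\left(j\right)}\left(M\right)$, consecutive contributions $\dim d_{k}\bigl(E_{\mu,k}^{\left(j\right)}\left(M\right)\bigr)$ appear with opposite signs and telescope to leave only $\dim d_{k}\bigl(E_{\mu,k}^{\left(r\right)}\left(M\right)\bigr)$, which is the first identity. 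Non-negativity is then automatic, and the case $r=n$ reduces to $\dim d_{k}\bigl(E_{\mu,k}^{\left(n\right)}\left(M\right)\bigr)=0$ since $\Omega^{n+1}\left(M\right)=0$.

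The main conceptual step, and the only one with any content, is the exactness argument in the first step: one must recognize that the commutation of $d_{k}$ and $d_{k}^{*}$ with $\Delta_{k}$ makes the eigenspaces form a subcomplex, and that the non-vanishing of $\mu$ is exactly what allows the Hodge-type identity $\omega=d_{k}\bigl(\mu^{-1}d_{k}^{*}\omega\bigr)$ to produce a primitive inside the same eigenspace. After that, the rest is elementary bookkeeping with finite-dimensional spaces.
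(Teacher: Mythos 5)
Your proof is correct and takes the same route as the paper: the paper's proof consists of the single remark that the complex $d_{k}\colon E_{\mu,k}^{\left(j\right)}\left(M\right)\to E_{\mu,k}^{\left(j+1\right)}\left(M\right)$ is exact, and you have simply supplied the standard supporting details (the commutation of $d_{k}$ with $\Delta_{k}$, the Hodge-type identity $\omega=d_{k}\bigl(\mu^{-1}d_{k}^{*}\omega\bigr)$ for $\mu\neq 0$, and the rank--nullity telescoping).
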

\begin{proof}
It follows from the fact that the complex $d_{k}: E_{\mu,k}^{\left(r\right)}\left(M\right)\to E_{\mu,k}^{\left(r+1\right)}\left(M\right)$ is exact. 
\end{proof}

For each $t>0$, define the heat operator $e^{-t\Delta_{k}^{\left(r\right)}}:L_{r}^{2}\left(M\right)\to \dom \Delta_{k}^{\left(r\right)}$ such that $e^{-t\Delta_{k}^{\left(r\right)}}\omega\in \Omega^{r}\left(\mathbb{R}^{+}\times M\right)$ and the operator satisfies
\begin{equation*}
\begin{aligned}
\left\{ \begin{array}{l}
\frac{\partial}{\partial t} e^{-t\Delta_{k}^{\left(r\right)}}\omega+ \Delta_{k}^{\left(r\right)} e^{-t\Delta_{k}^{\left(r\right)}}\omega = 0\\
\lim_{t\to 0^{+}} \left\| e^{-t\Delta_{k}^{\left(r\right)}}\omega-\omega\right\|=0
\end{array}\right. .
\end{aligned}
\end{equation*}
The associated distribution kernel
\begin{equation*}
\begin{aligned}
e^{-t\Delta_{k}^{\left(r\right)}}\left(x,y\right)\in \mathcal{C}^{\infty}\left(\mathbb{R}^{+}\times M\times M;\bigwedge^{r}\left(T^{*} M\right) \boxtimes \left(\bigwedge^{r}\left(T^{*}M\right)\right)^{*}\right)
\end{aligned}
\end{equation*}
is called the heat kernel with respect to $\Delta_{k}^{\left(r\right)}$. Here, we denote by $E\boxtimes F^{*}$ the vector bundle over $M\times M$ whose fiber at $\left(x,y\right)$ is the space of linear transformations from $F_{y}$ to $E_{x}$. 

The heat kernel can be expressed in different ways. First, choose a local orthonormal frame $\left\{E^{I}\right\}_{I}$ for $\bigwedge^{r}\left(T^{*} M\right)$ and denote by $\left(E^{I}\right)^{*}$ the dual element to $E^{I}$, and we can write
\begin{equation*}
\begin{aligned}
e^{-t\Delta_{k}^{\left(r\right)}}\left(x,y\right)
=\sideset{}{'}\sum_{I,J} e^{-t\Delta_{k}^{\left(r\right)}}\phantom{}_{I,J}\left(x,y\right) E^{I}\left(x\right) \otimes \left( E^{J}\right)^{*} \left(y\right),
\end{aligned}
\end{equation*}
where $E^{I}\left(x\right)\otimes \left(E^{J}\right)^{*}\left(y\right)\in \bigwedge^{r}T^{*}_{x}M\otimes \left(\bigwedge^{r}T^{*}_{y}M\right)^{*}$ satisfies
\begin{equation*}
\begin{aligned}
\left( E^{I}\left(x\right)\otimes \left(E^{J}\right)^{*}\left(y\right) \right) \left( E_{K}\left(y\right)\right) = \langle E^{K}\left(y\right) | E^{J}\left(y\right) \rangle \cdot  E^{I}\left(x\right) = \delta^{K}_{J} E^{I}\left(x\right),
\end{aligned}
\end{equation*}
and the corresponding component function $e^{-t\Delta_{k}^{\left(r\right)}}\phantom{}_{I,J}\left(x,y\right)\in \mathcal{C}^{\infty}\left(\mathbb{R}^{+}\times M\times M\right)$. 

Set $d_{\lambda}= \dim E_{\lambda,k}^{\left(r\right)}\left(M\right)$ and let $\left\{\varphi_{i}^{\lambda}\right\}_{i=1,\cdots, d_{\lambda},\lambda\in \spe \Delta_{k}^{\left(r\right)}}$  be  a complete orthonormal basis for $L_{r}^{2}\left(M\right)$ such that $\Delta_{k}^{\left(r\right)}\varphi_{i}^{\lambda}= \lambda \varphi_{i}^{\lambda}$, and we can write the heat kernel as
\begin{equation}\label{Eq: Heat Kernel in terms of eigenforms}
\begin{aligned}
e^{-t\Delta_{k}^{\left(r\right)}}\left(x,y\right)=\sum_{\lambda\in \spe \Delta_{k}^{\left(r\right)}} \sum_{i=1}^{d_{\lambda}} e^{-t\lambda} \varphi_{i}^{\lambda}\left(x\right)\otimes \left(\varphi_{i}^{\lambda}\right)^{*}\left(y\right).
\end{aligned}
\end{equation}
In fact, this series converges uniformly on compact subsets of $\mathbb{R}^{+}\times M\times M$.

\subsection{Analytic Tools}\label{f.t. and s.s.}

In this subsection, we review some well-known analytic tools.
First, we review the notion of Sobolev norms and adopt it for differential forms.

We begin by recalling that for each $f\in \mathcal{C}^{\infty}_{c}\left(\mathbb{R}^{n}\right)$, the Fourier transform of $f$ is defined by $\hat{f}\left(\xi\right) = \left(2\pi\right)^{-\frac{n}{2}}\int_{\mathbb{R}^{n}} e^{-\sqrt{-1}x\cdot \xi} f\left(x\right) dx$, where $x\cdot \xi = \sum_{i=1}^{n} x_{i}\xi_{i}$ is the standard dot product in $\mathbb{R}^{n}$ and $dx=dx^{1}\cdots dx^{n}$ is the standard volume element on $\mathbb{R}^{n}$. Recall that the $L^{2}$ space on $\mathbb{R}^{n}$ is given by $L^{2}\left(\mathbb{R}^{n}\right) := \left\{ \text{$f:\mathbb{R}^{n}\to \mathbb{R}$ measurable functions}: \int_{\mathbb{R}^{n}} \left|f\right|^{2} dx \right\}$ together with the inner product $\left(f|g\right):=\int_{\mathbb{R}^{n}} f\overline{g} dx$, where $\overline{g}$ is the complex conjugate of $g$.  It is well-known that we can extend the notion of Fourier transform to $L^{2}$ functions. Recall also that the Parseval's formula: $\left(\widehat{f} | \widehat{g}\right) =\left( f | g \right)$ for any two $f,g\in L^{2}\left(\mathbb{R}^{n}\right)$.

Let $m\in \mathbb{N}\cup \left\{0\right\}$. Define the Sobolev norm of order $m$ on $L^{2}\left(\mathbb{R}^{n}\right)$ by
\begin{equation*}
\begin{aligned}
\left\| f\right\|_{m} := \left(\int_{\mathbb{R}^{n}} \left(1+\left|\xi\right|^{2}\right)^{m} \left|\hat{f}\left(\xi\right)\right|^{2} d\xi\right)^{\frac{1}{2}}.
\end{aligned}
\end{equation*}
We put $W^{m}\left(\mathbb{R}^{n}\right):= \left\{ f\in L^{2}\left(\mathbb{R}^{n}\right): \left\| f\right\|_{m}<\infty \right\}$. By the Parseval's formula, we see that $\left\| f\right\|_{0}= \left\| f\right\|_{L^{2}\left(\mathbb{R}^{n}\right)}$.
Let $U\subset \mathbb{R}^{n}$ be an open subset and we set
\begin{equation*}
\begin{aligned}
W^{m}_{c}\left(U\right) := \left\{ f\in W^{m}\left(\mathbb{R}^{n}\right): \text{$\supp f\subset U$ is compact} \right\}. 
\end{aligned}
\end{equation*}

Let $U$ be a subset of $\mathbb{R}^{n}$. For each $l\in \mathbb{N}\cup\left\{0\right\}$, define the $\mathcal{C}^{l}$-norm on $\mathcal{C}^{l}\left(U\right)$ by
\begin{equation*}
\begin{aligned}
\left\| f\right\|_{\mathcal{C}^{l}\left(U\right)} := \sum_{\left|\alpha\right|\leq l}\sup_{U} \left| \partial^{\alpha}_{x} f \right|
\end{aligned}
\end{equation*}
for each $f\in \mathcal{C}^{l}\left(U\right)$.
Let us state the Sobolev embedding theorem as follows:
\begin{theo} \label{Thm: Sobolev embedding thm}
If $f\in W^{m}\left(\mathbb{R}^{n}\right)$, $m\geq \frac{n}{2}+1+l$, then $f\in \mathcal{C}^{l}\left(\mathbb{R}^{n}\right)$ and
\begin{equation*}
\begin{aligned}
\left\| f\right\|_{\mathcal{C}^{l}\left(\mathbb{R}^{n}\right)} \leq C\left\| f\right\|_{m}. 
\end{aligned}
\end{equation*}
\end{theo}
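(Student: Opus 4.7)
The plan is to prove the Sobolev embedding theorem by Fourier inversion, first establishing the case $l=0$ and then bootstrapping to general $l$ via differentiation on the Fourier side.

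For the base case $l=0$, the hypothesis becomes $m\geq \frac{n}{2}+1$, which in particular gives $m>\frac{n}{2}$. My first step would be to show that $\hat{f}\in L^{1}(\mathbb{R}^{n})$ with a quantitative bound in terms of $\|f\|_{m}$. Writing $|\hat{f}(\xi)| = (1+|\xi|^{2})^{-m/2}\cdot (1+|\xi|^{2})^{m/2}|\hat{f}(\xi)|$ and applying Cauchy--Schwarz to the two factors gives
\begin{equation*}
\int_{\mathbb{R}^{n}} |\hat{f}(\xi)|\, d\xi \;\leq\; \left(\int_{\mathbb{R}^{n}} (1+|\xi|^{2})^{-m}\, d\xi\right)^{\!1/2} \|f\|_{m}.
\end{equation*}
The first integral converges precisely when $2m>n$ (seen by passing to polar coordinates, where the radial integral $\int_{0}^{\infty} r^{n-1}(1+r^{2})^{-m}\, dr$ is finite iff $2m-n+1>1$), so it is finite under our hypothesis, and denote it by $C^{2}$.

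Since $\hat{f}\in L^{1}$, Fourier inversion applies: the function $\tilde{f}(x):=(2\pi)^{-n/2}\int e^{\sqrt{-1}x\cdot\xi}\hat{f}(\xi)\, d\xi$ is bounded and, by dominated convergence, continuous, and it agrees with $f$ almost everywhere. The estimate $|\tilde{f}(x)|\leq (2\pi)^{-n/2}\|\hat{f}\|_{L^{1}}\leq C\|f\|_{m}$ then gives $\|f\|_{\mathcal{C}^{0}}\leq C\|f\|_{m}$, completing the $l=0$ case.

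For general $l$, I would reduce to the base case by working with each derivative $\partial_{x}^{\alpha}f$, $|\alpha|\leq l$. On the Fourier side, $\widehat{\partial_{x}^{\alpha}f}(\xi)=(\sqrt{-1}\xi)^{\alpha}\hat{f}(\xi)$, and using $|\xi^{\alpha}|^{2}\leq (1+|\xi|^{2})^{|\alpha|}$ we obtain $\|\partial_{x}^{\alpha}f\|_{m-|\alpha|}\leq \|f\|_{m}$. Since $m-|\alpha|\geq \frac{n}{2}+1+(l-|\alpha|)\geq \frac{n}{2}+1$, the $l=0$ case applies to $\partial_{x}^{\alpha}f$, yielding a continuous representative with $\|\partial_{x}^{\alpha}f\|_{\mathcal{C}^{0}}\leq C\|f\|_{m}$. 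Summing over $|\alpha|\leq l$ gives the claimed $\mathcal{C}^{l}$-bound.

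This argument is largely classical and I do not expect a genuine obstacle; the one point requiring a little care is justifying that the distributional derivatives $\partial^{\alpha}f$ used to define the Sobolev norm coincide with genuine continuous derivatives of the representative $\tilde{f}$, which follows from applying Fourier inversion simultaneously to $f$ and to each $\partial_{x}^{\alpha}f$ once their Fourier transforms are known to be integrable.
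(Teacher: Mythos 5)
The paper states Theorem \ref{Thm: Sobolev embedding thm} as a known classical fact and does not supply a proof, so there is no in-paper argument to compare against. Your proof is the standard Fourier-analytic one and is correct: Cauchy--Schwarz against the weight $\left(1+\left|\xi\right|^{2}\right)^{m}$ gives $\hat{f}\in L^{1}$ when $2m>n$, Fourier inversion then produces a bounded continuous representative with the $\mathcal{C}^{0}$ estimate, and the passage to general $l$ by applying the base case to each $\partial^{\alpha}f$ (using $\left\|\partial^{\alpha}f\right\|_{m-\left|\alpha\right|}\leq\left\|f\right\|_{m}$) is sound. The one subtlety you flag — identifying the distributional derivatives with classical derivatives of the continuous representative — is handled correctly by noting that $\xi^{\beta}\hat{f}\in L^{1}$ for all $\left|\beta\right|\leq l$, which licenses differentiation under the inversion integral; you could make this explicit, but the outline as given is complete.
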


Let $m\in \mathbb{N}\cup\left\{0\right\}$. We define the Sobolev norm of order $-m$ by duality: given an open subset $U\subset \mathbb{R}^{n}$, let $W^{-m}_{c}\left(U\right)$ be the set consisting of measurable functions $f$ on $U$ for which there exists $C>0$ such that $\left|\int_{U} fg dx \right|\leq C\left\|g\right\|_{m}$ for each $g\in W^{m}_{c}\left(U\right)$; then we define the Sobolev norm of order $-m$ on $W^{m}_{c}\left(U\right)$ by
\begin{equation*}
\begin{aligned}
\left\| f\right\|_{-m} := \sup_{\substack{ 
\text{$g \in W^{m}_{c}\left(U\right)$}\\
\text{$g \neq 0$}
}} \frac{\left| \int_{U} fg dx \right|}{\left\| g \right\|_{m}}.
\end{aligned}
\end{equation*}
If $m=0$, then this duality defined norm coincides the usual $L^{2}$-norm $\left\|\cdot\right\|_{L^{2}\left(U\right)}$, so we included this case as we defined the negative norms.

It is clear to see $\mathcal{C}^{\infty}_{c}\left(U\right)\subset W^{-m}_{c}\left(U\right)$. In fact, the negative norm of $f\in \mathcal{C}^{\infty}_{c}\left(U\right)$ has an upper bound in terms of Fourier transform. 

\begin{prop}\label{Prop: estimate on negative order sobolev norms}
Let $U\subset \mathbb{R}^{n}$ be an open subset and let $f\in \mathcal{C}^{\infty}_{c}\left(U\right)$. Then for each $m\in \mathbb{N}\cup \left\{0\right\}$, 
\begin{equation}\label{Ineq: equivalence norm of negative s. norm}
\begin{aligned}
\left\| f\right\|_{-m} \leq  \left(\int_{\mathbb{R}^{n}}\left(1+\left|\xi\right|^{2}\right)^{-m}\left| \widehat{f}\left(\xi\right)\right|^{2} \ d \xi \right)^{\frac{1}{2}}.
\end{aligned}
\end{equation}
\end{prop}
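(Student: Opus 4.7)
The plan is to bound $\bigl|\int_{U} fg\,dx\bigr|$ for an arbitrary nonzero test function $g\in W^{m}_{c}\left(U\right)$ by the right-hand side of \eqref{Ineq: equivalence norm of negative s. norm} times $\left\|g\right\|_{m}$, and then take the supremum over $g$ in the duality definition of $\left\|f\right\|_{-m}$. The only analytic inputs are Parseval's identity and the Cauchy--Schwarz inequality applied with the dual weights $\left(1+\left|\xi\right|^{2}\right)^{\pm m/2}$.

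More concretely, after extending $g$ by zero to all of $\mathbb{R}^{n}$, I would rewrite the bilinear pairing as the Hermitian $L^{2}$-inner product $\int_{U} fg\,dx = \left(f\,|\,\overline{g}\right)$ and invoke Parseval to obtain $\int_{U} fg\,dx = \bigl(\widehat{f}\,\bigm|\,\widehat{\overline{g}}\bigr)$. Since $\bigl|\widehat{\overline{g}}\left(\xi\right)\bigr| = \bigl|\widehat{g}\left(-\xi\right)\bigr|$, the change of variable $\xi\mapsto -\xi$ gives $\int_{\mathbb{R}^{n}}\left(1+\left|\xi\right|^{2}\right)^{m}\bigl|\widehat{\overline{g}}\left(\xi\right)\bigr|^{2}\,d\xi = \left\|g\right\|_{m}^{2}$, so no information is lost in passing from $g$ to $\overline{g}$.

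Next, I would split the integrand on the Fourier side as the product of $\left(1+\left|\xi\right|^{2}\right)^{-m/2}\widehat{f}\left(\xi\right)$ and $\left(1+\left|\xi\right|^{2}\right)^{m/2}\,\overline{\widehat{\overline{g}}\left(\xi\right)}$, and apply the Cauchy--Schwarz inequality in $L^{2}\left(\mathbb{R}^{n}\right)$. The first factor integrates in $L^{2}\left(\mathbb{R}^{n}\right)$ to precisely the right-hand side of \eqref{Ineq: equivalence norm of negative s. norm}, while the second factor integrates to $\left\|g\right\|_{m}$ by the identity above. Dividing through by $\left\|g\right\|_{m}$ and taking the supremum over all nonzero $g\in W^{m}_{c}\left(U\right)$ yields the claimed bound on $\left\|f\right\|_{-m}$.

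The argument is essentially routine and I do not foresee a genuine obstacle; the only mild subtlety is bookkeeping with complex conjugates, since the pairing $\int fg\,dx$ appearing in the duality definition of $\left\|\cdot\right\|_{-m}$ is bilinear rather than sesquilinear, so one must insert a conjugate to invoke Parseval in the form stated in the paper. Because $g\mapsto \overline{g}$ is an isometry of every weighted Sobolev space in sight, this incurs no cost in the final estimate.
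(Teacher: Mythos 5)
Your proposal is correct and follows essentially the same route as the paper: Parseval to move the pairing to the Fourier side, split by the dual weights $\left(1+\left|\xi\right|^{2}\right)^{\pm m/2}$, and apply Cauchy--Schwarz. The only difference is that you track the complex-conjugate bookkeeping explicitly (via $\widehat{\overline{g}}\left(\xi\right) = \overline{\widehat{g}\left(-\xi\right)}$), whereas the paper's proof writes $\int_{U} fg\,dx = \int \widehat{f}\widehat{g}\,d\xi$ directly, which is slightly imprecise but harmless for the final estimate.
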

\begin{proof}
By the Parseval's formula, we derive that
\begin{equation*}
\begin{aligned}
\left|\int_{U} fg d\xi \right| 
&=\left| \int_{\mathbb{R}^{n}} \hat{f}\hat{g} d\xi \right| 
= \left| \int_{\mathbb{R}^{n}} \left(1+\left|\xi\right|^{2}\right)^{-\frac{m}{2}}\hat{f} \cdot \left(1+\left|\xi\right|^{2}\right)^{\frac{m}{2}}\hat{g} d\xi \right|\\
&\leq\left(\int_{\mathbb{R}^{n}}\left(1+\left|\xi\right|^{2}\right)^{-m}\left| \widehat{f}\left(\xi\right)\right|^{2} \ d \xi \right)^{\frac{1}{2}} \left\|g\right\|_{m},
\end{aligned}
\end{equation*}
which implies \eqref{Ineq: equivalence norm of negative s. norm}.
\end{proof}

To extend the notions of Sobolev norms and $\mathcal{C}^{l}$-norms to the Riemannian vector bundle $\left(E,M\right)$ over a Riemannian manifold $M$, we choose a pair $\left(\mathcal{V},\mathcal{P},\mathcal{E}\right)$ as follows: let $\mathcal{V}$ be a set given by chosen coordinate charts of $M$ such that their coordinate domains cover $M$, let $\mathcal{P}$ be a partition of unity $\mathcal{P}$ subordinate to the open cover from $\mathcal{V}$ and $\sum_{\psi\in \mathcal{P}}\psi^{2}=1$, and let $\mathcal{E}$ be the set collecting chosen local  orthonormal frames $\left\{E_{I}\right\}_{I}$ of $E$ over the coordinate domains from $\mathcal{V}$. 

Let $m\in \mathbb{Z}$. For each smooth section $s\in \mathcal{C}^{\infty}_{c}\left(M,E\right)$, define the Sobolev norm of order $m$ by
\begin{equation*}
\begin{aligned}
\left\| s\right\|_{m} 
:= \left( \sum_{\psi_{i}\in \mathcal{P}} \left\| \psi_{i}  s\right\|_{m}^{2}\right)^{\frac{1}{2}},
\end{aligned}
\end{equation*}
where
\begin{equation*}
\begin{aligned}
\left\| \psi_{i}  s\right\|_{m} :=  \left( \sum_{I} \left\| \left(\psi_{i} s_{I} \right)\circ \varphi_{i}^{-1}\right\|_{m}^{2} \right)^{\frac{1}{2}},
\end{aligned}
\end{equation*}
and $\left(V_{i},\varphi_{i}\right) \in \mathcal{V}$, $\psi_{i}s = \sum_{I} \psi_{i} s_{I} E_{I}$ in $V_{i}$, and $\left\| \left( \psi_{i} s_{I}\right) \circ \varphi_{i}^{-1}\right\|_{m}$ is then defined as above. Note that $\left\|\cdot\right\|_{0}=\left\| \cdot\right\|_{L^{2}\left(M,E\right)}$.

Let $U\subset M$ be a subset. For each $l\in \mathbb{N}\cup \left\{0\right\}$, define the $\mathcal{C}^{l}$-norm of $s\in \mathcal{C}^{\infty}\left(M,E\right)$ by
\begin{equation*}
\begin{aligned}
\left\| s\right\|_{\mathcal{C}^{l}\left(U\right)} := \left( \sum_{\psi_{i}\in \mathcal{P}} \left\| \psi_{i}s\right\|_{\mathcal{C}^{l}\left(U\right)}^{2} \right)^{\frac{1}{2}},
\end{aligned}
\end{equation*}
where 
\begin{equation*}
\begin{aligned}
 \left\| \psi_{i}s\right\|_{\mathcal{C}^{l}\left(U\right)}
:=\sup_{U\cap V_{i}} \sum_{\left|\alpha\right|\leq l} \left(\sum_{I} \left|\partial^{\alpha}\left[ \left(\psi_{i} s_{I}\right) \circ \varphi_{i}^{-1}\right]\right|^{2}\right)^{\frac{1}{2}}
\end{aligned}
\end{equation*}
and $\left(V_{i},\varphi_{i}\right) \in \mathcal{V}$, $\psi_{i}s = \sum_{I} \psi_{i} s_{I} E_{I}$ in $V_{i}$.

If both of the manifold $M$ and the vector bundle $E$ are global (for example, $M=\mathbb{R}^{n}$ and $E=\bigwedge^{r}T^{*}\mathbb{R}^{n}$), then choosing a partition of unity to define the Sobolev norms and $\mathcal{C}^{l}$-norms is redundant.

Finally, we review the spectral theorem in functional analysis. 
\begin{theo} \label{Spectral Theorem} Let $X$ be a Hilbert space and let $A:\dom A\subset X\to X$ be a self-adjoint operator with the spectrum $S=\spe A$. Then there exist a finite measure $\mu$ on $S\times \mathbb{N}$ and a unitary operator $U:H \to L^{2}\left(S\times\mathbb{N}\right)$ that is one-to-one, onto with the following properties:
\begin{enumerate}
\item[(a)] Let $\eta\in X$. Then $\eta\in \dom A$ if and only if $s\cdot U\left(\eta\right) \in L^{2}\left(S\times \mathbb{N}\right)$;
\item[(b)] Define the operator $\mathcal{S}$ by 
\begin{equation*}
\begin{aligned}
\mathcal{S}:\dom \mathcal{S}\subset L^{2}\left(S\times \mathbb{N}\right) &\to L^{2}\left(S\times \mathbb{N}\right)\\
g\left(s,n\right) &\mapsto sg\left(s,n\right),
\end{aligned}
\end{equation*}
where 
\begin{equation*}
\begin{aligned}
\dom \mathcal{S} = \left\{ g\left(s,n\right)\in L^{2}\left(S\times \mathbb{N}\right): sg\left(s,n\right)\in  L^{2}\left(S\times \mathbb{N}\right) \right\};
\end{aligned}
\end{equation*}
then $UAU^{-1}=\mathcal{S}$ on $U\left(\dom A\right)$.
\end{enumerate}
\end{theo}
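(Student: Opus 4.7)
The plan is to reduce the unbounded case to the bounded case via the Cayley transform, then diagonalize the resulting unitary operator by decomposing $X$ into cyclic subspaces.

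First I would set $V := (A - iI)(A + iI)^{-1}$, the Cayley transform of $A$. Since $A$ is self-adjoint, standard computations show that $V$ is unitary on $X$ with $1\notin \spe V$, and one recovers $A = i(I + V)(I - V)^{-1}$ on $\dom A = \im (I - V)$. The desired spectral representation of $A$ thus reduces to a spectral representation of the bounded normal operator $V$.

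Next I would build a bounded continuous functional calculus $\varphi \mapsto \varphi(V)$ from $\mathcal{C}(\spe V)$ into bounded operators on $X$, starting from the polynomial calculus $p(V,V^{-1})$ and extending via Stone--Weierstrass together with the spectral radius identity $\|p(V,V^{-1})\|=\sup_{\spe V}|p|$. For a fixed $\eta\in X$, applying the Riesz representation theorem to the positive linear functional $\varphi \mapsto (\eta \mid \varphi(V)\eta)$ produces a finite positive Borel measure $\mu_{\eta}$ on $\spe V$. A standard Zorn's-lemma argument (or a greedy construction in the separable case) then decomposes $X = \bigoplus_{n\in\mathbb{N}} X_n$ into orthogonal cyclic subspaces generated by unit-like vectors $\eta_n$ chosen with $\|\eta_n\|^2 = 2^{-n}$ so that the combined mass $\sum_n \mu_{\eta_n}(\spe V)$ is finite. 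On each $X_n$ the map $\varphi(V)\eta_n \mapsto \varphi$ extends to a unitary $U_n:X_n \to L^2(\spe V,\mu_{\eta_n})$ intertwining $V|_{X_n}$ with multiplication by the coordinate function.

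Finally I would assemble the $U_n$ into a single unitary $\widetilde U : X \to L^2(\spe V \times \mathbb{N},\widetilde{\mu})$, where $\widetilde{\mu}$ restricts to $\mu_{\eta_n}$ on $\spe V \times \{n\}$, under which $V$ becomes multiplication by the first coordinate. The Cayley map $t\mapsto i(1+t)(1-t)^{-1}$ carries $\spe V \setminus \{1\}$ bijectively onto $\spe A = S$, and pushing $\widetilde{\mu}$ forward through this map produces the finite measure $\mu$ on $S \times \mathbb{N}$ and the unitary $U$ of the statement; property (b) is then immediate from the conjugation relation between $V$ and $A$.

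The main obstacle I anticipate is the domain characterization in (a). In the multiplication picture for $V$, the condition $\eta \in \dom A = \im(I - V)$ must be translated into the $L^2$-condition $s\cdot U(\eta)(s,n) \in L^2(S\times \mathbb{N})$. This requires carefully showing that $(I - V)^{-1}$ acts on the spectral side as multiplication by the push-forward of $t \mapsto (1-t)^{-1}$, an unbounded function on $\spe V$, and hence that $L^2$-integrability of $s \cdot U(\eta)$ is equivalent to $\eta$ lying in the range of $I - V$. Once this identification is in place the rest of the argument is routine bookkeeping.
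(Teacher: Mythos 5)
The paper states Theorem \ref{Spectral Theorem} as a review of a standard result in functional analysis and gives no proof of it, so there is no internal argument to compare against; your proposal must be judged on its own merits. Your route --- reduce to a unitary operator via the Cayley transform, build a continuous functional calculus by Stone--Weierstrass, decompose $X$ into cyclic subspaces, and push the resulting multiplication-operator model forward through the inverse Cayley map --- is the standard textbook proof, and the overall architecture is sound, including the device of taking $\|\eta_n\|^2 = 2^{-n}$ so that the total measure $\widetilde{\mu}$ is finite. Your discussion of item (a) correctly identifies that $\eta \in \dom A = \im(I-V)$ must translate to the square-integrability of $s \cdot U(\eta)$, since in the multiplication picture $(I-V)^{-1}$ becomes multiplication by $(1-t)^{-1}$ and the Cayley map sends this to multiplication by a function comparable to $s$.

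There is, however, one genuine inaccuracy you should repair. You claim that ``$V$ is unitary on $X$ with $1 \notin \spe V$.'' This is false precisely in the case that matters here: if $A$ is unbounded, then $A = i(I+V)(I-V)^{-1}$ forces $(I-V)^{-1}$ to be unbounded, hence $1 \in \spe V$. (Indeed, for the Witten Laplacian $\Delta_k^{(r)}$ the Cayley transform has $1$ in its spectrum.) What is true, and what your argument actually needs, is the weaker fact that $1$ is not an \emph{eigenvalue} of $V$, i.e.\ $\Ker(I-V) = \{0\}$, which holds because $A$ is a (single-valued) self-adjoint operator with dense domain. Consequently every cyclic measure $\mu_{\eta_n}$ assigns zero mass to $\{1\}$, and the pushforward of $\widetilde{\mu}$ under the homeomorphism $\spe V \setminus \{1\} \to \spe A = S$ is well defined and finite, so the rest of your construction survives unchanged once this correction is made. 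You should also note that the countable decomposition into cyclic subspaces (and hence the index set $\mathbb{N}$ in the statement) implicitly requires $X$ to be separable; this is harmless in the paper's setting, where $X = L^2_r(M)$ for $M$ compact, but is worth flagging since the theorem as phrased does not state it.
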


It follows from Theorem \ref{Spectral Theorem} that we can identify the element $\omega\in \dom A$ with the element $g=U\left(\omega\right) \in L^{2}\left(S\times \mathbb{N}\right)$, the operator $A$ with the operator $\mathcal{S}$. Additionally, if $A$ is non-negative, then the heat operator $e^{-tA}$ can be identified with the operator defined by
\begin{equation*}
\begin{aligned}
P\left(t\right): L^{2}\left(S\times \mathbb{N}\right) &\to L^{2}\left(S\times \mathbb{N}\right)\\
g\left(s,n\right) &\mapsto e^{-ts}g\left(s,n\right).
\end{aligned}
\end{equation*}

\section{Scaled Heat Kernel Asymptotics}\label{A.B.S.H.K.}

In this section, we prove Theorem \ref{Main Thm: Semi-classical Heat Kernel Asymptotics}. To do so, we introduce the localization and scaling technique. 

\subsection{Localization}\label{local.}

To capture the local geometric data attached to the critical points of a Morse function, we introduce the locally flat metric as follows:

\begin{theo}[\cite{CM74}, \cite{W82}] \label{Thm: Locally Flat Metric} Let $M$ be a compact orientable smooth manifold of dimension $n$ and $f$ be a Morse function. Then $M$ admits a metric $g$ and coordinate charts $\left(U_{p},\varphi_{p}\right)$ around critical points $p\in \crit\left(f\right)$ such that in each of the coordinate charts $\left(U_{p},\varphi_{p}\right)$, we have
\begin{equation*}
\begin{aligned}
f\left(x\right) =  -\sum_{i=1}^{l} \frac{1}{2}\left(x^{i}\right)^{2} + \sum_{i=l+1}^{n} \frac{1}{2}\left(x^{i}\right)^{2}
\end{aligned}
\end{equation*}
with $l=\ind_{f} p$ and
\begin{equation*}
\begin{aligned}
g\left(\frac{\partial}{\partial x^{i}},\frac{\partial}{\partial x^{j}}\right) = \delta_{j}^{i},
\end{aligned}
\end{equation*}
where $\delta_{j}^{i}$ is the Kronecker delta.
\end{theo}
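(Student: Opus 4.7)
The plan is to combine the classical Morse Lemma with a partition-of-unity construction: the Morse Lemma supplies, around each critical point $p$, coordinates in which $f$ is a quadratic form, and we then declare a local metric to be Euclidean in those coordinates and glue all such local metrics together with the help of an arbitrary background metric. Finiteness of $\crit(f)$, which follows from the compactness of $M$ and the Morse condition, is what allows us to keep the critical neighborhoods pairwise disjoint, so that the local Euclidean metrics near different critical points do not interfere.

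First, I would apply the Morse Lemma at each $p\in\crit(f)$ to obtain a chart $(\widetilde U_p,\widetilde\varphi_p)$ centered at $p$ in which $f-f(p)=-\sum_{i=1}^l(y^i)^2+\sum_{i=l+1}^n(y^i)^2$; the substitution $x^i=\sqrt{2}\,y^i$ then produces the factor-$\tfrac12$ form displayed in the theorem (the additive constant $f(p)$ is irrelevant for all subsequent uses of $f$, since only its derivatives enter $d_k$, $\hess_f$, and $\Delta_k^{(r)}$). I would then shrink the $\widetilde U_p$ so that they are pairwise disjoint, pick nested open neighborhoods $V_p,W_p$ of $p$ with $\overline{V_p}\subset W_p$ and $\overline{W_p}\subset\widetilde U_p$, and choose cutoffs $\chi_p\in\mathcal{C}^\infty_c(\widetilde U_p,[0,1])$ with $\chi_p\equiv 1$ on $\overline{V_p}$ and $\supp\chi_p\subset W_p$. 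Setting $\chi_0:=1-\sum_p\chi_p$ gives a smooth $[0,1]$-valued function (by disjointness of the $W_p$) that vanishes on each $V_p$ and equals $1$ outside $\bigcup_p W_p$.

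Next, I would declare $g_p$ on $\widetilde U_p$ to be the pullback of the standard Euclidean metric on $\mathbb{R}^n$ via $\widetilde\varphi_p$, so $g_p(\partial/\partial x^i,\partial/\partial x^j)=\delta^i_j$ in the rescaled coordinates. Picking any background Riemannian metric $\widetilde g$ on $M$ (existence is standard, from paracompactness), I would set
\begin{equation*}
g := \sum_{p\in\crit(f)} \chi_p\, g_p + \chi_0\,\widetilde g,
\end{equation*}
each $\chi_p g_p$ being extended by zero off $\widetilde U_p$. At every point of $M$, $g$ is a convex combination of positive-definite symmetric bilinear forms with coefficients summing to $1$ and not all zero, so by convexity of the positive-definite cone, $g$ is a smooth Riemannian metric. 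On $V_p$ only the $\chi_p g_p$ term survives and $\chi_p\equiv 1$, hence $g\equiv g_p$, so the pair $(U_p,\varphi_p):=(V_p,\widetilde\varphi_p|_{V_p})$ gives charts in which $f$ has the desired Morse normal form and $g$ has Kronecker-delta components.

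I do not anticipate a substantive obstacle. The one point that requires care is that the $\tfrac12$-factor in the Morse form of $f$ and the orthonormality of $\partial/\partial x^i$ for $g$ must hold in the \emph{same} coordinate system, which forces $g_p$ to be declared Euclidean in the rescaled coordinates $x^i=\sqrt{2}\,y^i$ rather than in the raw Morse coordinates $y^i$. Convexity of the positive-definite cone is what guarantees that the cutoff sum is a bona fide Riemannian metric, and the rest is a standard paracompact gluing argument.
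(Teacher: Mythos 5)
Your proposal is correct and takes essentially the same approach as the paper: apply the Morse Lemma at each critical point, declare the metric Euclidean in those coordinates, and glue with a partition of unity whose bump at $p$ is identically $1$ on a shrunken neighborhood $U_p$. The only differences are cosmetic---you use a single background metric $\tilde g$ off the critical neighborhoods where the paper covers the complement by further flat coordinate charts, and you spell out two details the paper leaves implicit, namely the $\sqrt{2}$ rescaling needed to pass from the usual Morse normal form to the factor-$\tfrac12$ form (with the local Euclidean metric declared in the rescaled coordinates) and the need to arrange the partition of unity so that the weight at $p$ is exactly $1$ on $U_p$.
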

The construction of this metric is due to the Morse lemma and techniques in \cite{CM74}, but for the reader's convenience, we give a proof:
\begin{proof}[Proof of Theorem \ref{Thm: Locally Flat Metric}]

By the Morse lemma, we have for each $p\in \crit\left(f\right)$, there exists a coordinate chart $\left( V_{p}, \varphi_{p} \right)$ around $p$ such that $f$ is expressed as shown above.

Now, assume $V_{p}\cap V_{p'}=\emptyset$ if $p\neq p'$. For each $p$, let $U_{p}\subset \overline{U_{p}}\subset V_{p}$ and put
\begin{equation*}
\begin{aligned}
W=M\setminus \bigsqcup_{p\in \crit\left(f\right)} \overline{U}_{p}.
\end{aligned}
\end{equation*}
Note that $W$ is open. Let $\left\{W_{\alpha}\right\}_{\alpha}$ be an open cover of $W$ consisting of coordinate neighborhoods. Hence, $\left\{G_{\beta}\right\}_{\beta}=\left\{ W_{\alpha} \right\}_{\alpha} \cup \left\{ V_{p}\right\}_{p\in \crit\left(f\right)}$ is an open cover of $M$. $\left(G_{\beta},\varphi_{\beta}\right)$ denotes the coordinate chart for each $\beta$. Define 
\begin{equation*}
\begin{aligned}
g_{\beta}=\varphi_{\beta}^{*}\overline{g},
\end{aligned}
\end{equation*}
where $\overline{g}$ is the usual Euclidean metric on $\varphi_{\beta}\left( G_{\beta}\right)$ and $\varphi_{\beta} ^{*}\overline{g}$ is the pullback of $\overline{g}$ by $\varphi_{\beta}$.

Finally, let $\left\{\psi_{\beta}\right\}_{\beta}$ be a partition of unity subordinate to the cover $\left\{G_{\beta}\right\}_{\beta}$ and put
\begin{equation*}
\begin{aligned}
g=\sum_{\beta} \psi_{\beta}g_{\beta}.
\end{aligned}
\end{equation*}
Then $g$ is a Riemannian metric on $M$. Note that in each of the coordinate charts $\left( U_{p},\varphi_{p}\right)$, we have
\begin{equation*}
\begin{aligned}
g\left(\frac{\partial}{\partial x^{i}}, \frac{\partial}{\partial x^{j}}\right)=g_{p}\left(\frac{\partial}{\partial x^{i}}, \frac{\partial}{\partial x^{j}}\right)=\delta^{i}_{j},
\end{aligned}
\end{equation*}
since $\psi_{p}\left(x\right)=1$ in $U_{p}$. Therefore, we have furnished the desired metric.

\end{proof}

In the sequel, we adopt this metric throughout this paper together with the local charts around the critical points of a Morse function in question. 

Under such a metric, the local representation \eqref{Eq: Local expression of Witten Laplacian} can be reduced to
\begin{equation}\label{Eq: Local expression of Witten Laplacian under flat metric}
\begin{aligned}
\Delta_{k}^{\left(r\right)} \left( \omega dx^{I}\right)
= \left[ -\sum_{i=1}^{n} \frac{\partial^{2}}{\partial\left(x^{i}\right)^{2}} + k^{2}\left(x^{i}\right)^{2} + k\sum_{i=1}^{n}\varepsilon_{i}\varepsilon_{i}^{I}\right] \omega dx^{I}
\end{aligned}
\end{equation}
under the coordinate chart  $\left(U_{p},\varphi_{p}\right)$ around $p\in \crit f$, where $\varepsilon_{i}$ and $\varepsilon_{i}^{I}$ are defined respectively by
\begin{equation*}
\begin{aligned}
\varepsilon_{i} =\left\{ \begin{array}{ll}
-1 &; i\leq \ind _{f} p\\
1 &; \text{otherwise}
\end{array}\right.
\end{aligned}
\end{equation*}
and by
\begin{equation*}
\begin{aligned}
\varepsilon_{i}^{I} =\left\{ \begin{array}{ll}
1 &; \text{$i$ appears in $I$}\\
-1 &; \text{otherwise}
\end{array} \right. ,
\end{aligned}
\end{equation*}
for each smooth $r$-form $\omega=\omega_{I} d x^{I}$ acting on $\Delta_{k}^{\left(r\right)}$ with $I=\left(i_{1},\cdots,i_{r}\right)$ in increasing order.
As you can see, this local representation looks exactly like the harmonic oscillator with perturbation accordingly by the critical point $p$.

For each $p\in \crit\left(f\right)$, define $\Delta_{f,p}^{\left(r\right)}$ to be the differential operator acting on smooth $r$-forms on an open subset in $\mathbb{R}^{n}$ given by  
\begin{equation}\label{Eq: Model Laplacian}
\begin{aligned}
\Delta_{f,p}^{\left(r\right)}\left(\omega dx^{I}\right) = \Delta_{1}^{\left(r\right)} \left( \omega dx^{I}\right)
= \left[ -\sum_{i=1}^{n} \frac{\partial^{2}}{\partial\left(x^{i}\right)^{2}} +\left(x^{i}\right)^{2} + \sum_{i=1}^{n}\varepsilon_{i}\varepsilon_{i}^{I}\right] \omega dx^{I}.
\end{aligned}
\end{equation}
Note that $\Delta_{f,p}^{\left(r\right)}$ is the system of the harmonic oscillators attached to the critical point $p\in \crit\left(f\right)$ as in Witten's paper \cite{W82}.

\subsection{Scaling Technique}\label{s.t.}

Let $p\in \crit\left(f\right)$ and let $\left(U_{p},\varphi_{p}\right)$ be the coordinate chart around $p$ such that $U_{p}=\varphi_{p}^{-1}\left(B_{\frac{3}{2}}\left(0\right)\right)$ with a fixed $\delta>0$.

Given a sufficiently large $k>0$, put
\begin{equation*}
\begin{aligned}
U_{p}^{k} := \varphi_{p}^{-1}\left(B_{k^{-\frac{1}{2}+\varepsilon}}\left(0\right)\right)
\subset U_{p}
\end{aligned}
\end{equation*}
with $\varepsilon\in \left(0,\frac{1}{2}\right)$.

Let $\omega = \sum_{I}\nolimits' \omega_{I} dx^{I}\in \Omega^{r}\left(B_{k^{\varepsilon}}\left(0\right)\right)$ and define
\begin{equation*}
\begin{aligned}
\omega_{\left[k\right]} \left(x\right) = \sideset{}{'} \sum_{I} \omega_{I}\left(\sqrt{k}x\right) dx^{I} \in \Omega^{r}\left(B_{k^{-\frac{1}{2}+\varepsilon}}\left(0\right)\right).
\end{aligned}
\end{equation*}
Through the coordinate map $\varphi_{p}$, we see that the pulled back form of $\omega_{\left[k\right]}$ is of $\Omega^{r}\left(U_{p}^{k}\right)$ and we still denote it by $\omega_{\left[k\right]}$.

We give the following formula to illustrate how these two operators $\Delta_{f,p}^{\left(r\right)}$ and $\Delta_{k}^{\left(r\right)}$ relate to one another. 

\begin{prop} \label{Prop: Scaling formula}
For each $k>0$, and for each $\omega\in \Omega^{r}\left(B_{k^{\varepsilon}}\left(0\right)\right)$, 
\begin{equation}\label{Scaling formula 1}
\begin{aligned}
\Delta_{f,p}^{\left(r\right)}\omega=\frac{1}{k}\left(\Delta_{k}^{\left(r\right)}\omega_{\left[k\right]}\right)_{\left[\frac{1}{k}\right]}.
\end{aligned}
\end{equation}
\end{prop}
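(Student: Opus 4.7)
The plan is to verify the identity by a direct chain-rule computation, relying on the explicit local form \eqref{Eq: Local expression of Witten Laplacian under flat metric} of the Witten Laplacian in the flat coordinate chart $(U_{p},\varphi_{p})$ furnished by Theorem \ref{Thm: Locally Flat Metric}, which reduces the operator to a sum of a flat Laplacian, a quadratic potential, and a diagonal zeroth-order piece. Since both $\Delta_{k}^{(r)}$ and $\Delta_{f,p}^{(r)}$ act coefficient-wise on each basis form $dx^{I}$ in these coordinates, it suffices to fix a multi-index $I$ with $|I|=r$ and track how each of the three summands transforms when applied to the scaled scalar $\omega_{I}(\sqrt{k}x)$.

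First I would apply $\Delta_{k}^{(r)}$ to $\omega_{[k]}(x)=\sum_{I}\nolimits'\omega_{I}(\sqrt{k}x)\,dx^{I}$ term by term. Writing $y=\sqrt{k}x$, the chain rule gives $\partial^{2}/\partial(x^{i})^{2}[\omega_{I}(\sqrt{k}x)]=k\,(\partial^{2}\omega_{I}/\partial(y^{i})^{2})(\sqrt{k}x)$, so the Laplacian piece acquires an overall factor $k$. The quadratic potential rewrites as $k^{2}\sum_{i}(x^{i})^{2}=k\sum_{i}(y^{i})^{2}$ when evaluated at $y=\sqrt{k}x$, again producing a single factor of $k$. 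The zeroth-order piece $k\sum_{i}\varepsilon_{i}\varepsilon_{i}^{I}$ already carries the prefactor $k$ and its constants are unaffected by the dilation. Collecting these contributions, one reads off
\begin{equation*}
\Delta_{k}^{(r)}\omega_{[k]}(x) = k\sideset{}{'}\sum_{I}\Bigl[-\sum_{i}\frac{\partial^{2}\omega_{I}}{\partial(y^{i})^{2}}+\sum_{i}(y^{i})^{2}\omega_{I}+\sum_{i}\varepsilon_{i}\varepsilon_{i}^{I}\omega_{I}\Bigr]\bigg|_{y=\sqrt{k}x}dx^{I} = k\bigl(\Delta_{f,p}^{(r)}\omega\bigr)_{[k]}(x),
\end{equation*}
by comparison with \eqref{Eq: Model Laplacian}.

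Second, I would apply the inverse scaling $(\cdot)_{[1/k]}$, which substitutes $x\mapsto x/\sqrt{k}$, to both sides. Since $((\Delta_{f,p}^{(r)}\omega)_{[k]})_{[1/k]}=\Delta_{f,p}^{(r)}\omega$, this cancels the outer $[k]$ on the right-hand side and produces $(\Delta_{k}^{(r)}\omega_{[k]})_{[1/k]}=k\,\Delta_{f,p}^{(r)}\omega$, which is exactly \eqref{Scaling formula 1} after dividing by $k$.

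The argument is essentially routine once the local formula is in hand, so I do not expect a genuine obstacle; the one point requiring care is the bookkeeping, namely keeping track of which argument (the dilated $y=\sqrt{k}x$ or the original $x$) each derivative and each multiplier is being evaluated at, and ensuring that the three occurrences of the factor $k$ — produced respectively by the second derivatives, by the quadratic potential, and by the prefactor in the zeroth-order term — all conspire to combine into the single overall $k$ needed for the identity.
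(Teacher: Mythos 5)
Your proof is correct and takes exactly the route the paper intends: the paper's own proof is a one-line appeal to the flat local expression \eqref{Eq: Local expression of Witten Laplacian under flat metric} and a change of variables, and your computation simply spells out those steps in full. The bookkeeping is right: each of the three summands of the operator contributes one overall factor of $k$ under the dilation, and the inverse scaling $(\cdot)_{[1/k]}$ undoes the outer $(\cdot)_{[k]}$, yielding \eqref{Scaling formula 1}.
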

\begin{proof}

This formula follows from the local expression \eqref{Eq: Local expression of Witten Laplacian under flat metric} and change of variables.

\end{proof}

Define the scaled heat kernel at $p\in \crit \left(f\right)$ by
\begin{equation*}
\begin{aligned}
A_{\left(k\right),p}^{r}\left(t,x,y\right) &:= k^{-\frac{n}{2}} e^{-\frac{t}{k} \Delta_{k}^{\left(r\right)}} \left(\frac{x}{\sqrt{k}},\frac{y}{\sqrt{k}}\right)\\
& \in \mathcal{C}^{\infty} \left(\mathbb{R}^{+}\times B_{k^{\varepsilon}}\left(0\right) \times  B_{k^{\varepsilon}}\left(0\right), \bigwedge^{r}\left(T^{*}\mathbb{R}^{n}\right)\boxtimes \left( \bigwedge^{r}\left(T^{*}\mathbb{R}^{n}\right)\right)^{*}\right),
\end{aligned}
\end{equation*}
where $\frac{x}{\sqrt{k}}\in \mathbb{R}^{n}$, for the sake of convenience, stands for $\varphi_{p}^{-1}\left(\frac{x}{\sqrt{k}}\right)$. We can write $A_{\left(k\right),p}^{r}\left(t,x,y\right)$ as
\begin{equation*}
\begin{aligned}
A_{\left(k\right),p}^{r}\left(t,x,y\right) 
= \sideset{}{'} \sum_{I,J} A_{\left(k\right),p}^{r}\phantom{}_{I,J}\left(t,x,y\right) dx^{I}\left(x\right)\otimes \left(dx^{J}\right)^{*}\left(y\right),
\end{aligned}
\end{equation*}
where the component functions
\begin{equation*}
\begin{aligned}
A_{\left(k\right),p}^{r}\left(t,x,y\right)\phantom{}_{I,J}\left(t,x,y\right) = k^{-\frac{n}{2}}e^{-\frac{t}{k}\Delta_{k}^{\left(r\right)}}\phantom{}_{I,J}\left(\frac{x}{\sqrt{k}},\frac{y}{\sqrt{k}}\right).
\end{aligned}
\end{equation*}

For each $t>0$, define the scaled heat operator at $p$ by
\begin{equation*}
\begin{aligned}
A_{\left(k\right),p}^{r}\left(t\right) : \Omega^{r}_{c}\left(B_{k^{\varepsilon}}\left(0\right)\right) &\to \Omega^{r}\left(B_{k^{\varepsilon}}\left(0\right)\right)\\
\omega  &\mapsto \int_{B_{k^{\varepsilon}}\left(p\right)} A_{\left(k\right),p}^{r}\left(t,x,y\right) \omega\left(y\right) \ d y.
\end{aligned}
\end{equation*}
Note that $A_{\left(k\right),p}^{r}\left(t\right)\omega \in \Omega^{r}\left(\mathbb{R}^{+}\times B_{k^{\varepsilon}}\left(0\right)\right)$ for each $\omega\in \Omega^{r}_{c}\left(B_{k^{\varepsilon}}\left(0\right)\right)$.

Let us show how the scaled heat kernel/operator relates to the ordinary heat kernel/operator as follows:

\begin{prop} \label{Prop: Scaling formula for heat kernel} For each $k>0$
\begin{equation}\label{Eq: Scaling formula for heat kernel}
\begin{aligned}
A_{\left(k\right),p}^{r}\left(t\right)\omega
=\left(e^{-\frac{t}{k}\Delta_{k}^{\left(r\right)}}\omega_{\left[k\right]}\right)_{\left[\frac{1}{k}\right]}
\end{aligned}
\end{equation}
for each $\omega\in \Omega^{r}_{c}\left(B_{k^{\varepsilon}}\left(0\right)\right)$.
\end{prop}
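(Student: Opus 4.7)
The plan is to verify the identity directly by computing both sides in the local coordinate chart $(U_p,\varphi_p)$, expanding everything in the frame $\{dx^I\}_I$ and reducing to a change-of-variables calculation on the component functions.

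First, I would note that because $\omega\in \Omega^{r}_{c}(B_{k^{\varepsilon}}(0))$ has compact support and the scaling $(\cdot)_{[k]}$ rescales the base by a factor of $\sqrt{k}$, the form $\omega_{[k]}$ is compactly supported in $B_{k^{-\frac{1}{2}+\varepsilon}}(0)$, which is identified with $U_{p}^{k}\subset U_{p}\subset M$ under $\varphi_{p}$. Hence $\omega_{[k]}$ extends by zero to an element of $L^{2}_{r}(M)$, and $e^{-\frac{t}{k}\Delta_{k}^{(r)}}\omega_{[k]}$ is a well-defined smooth $r$-form on $M$ that can be restricted to $U_{p}^{k}$ before applying $(\cdot)_{[1/k]}$.

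Next, I would unwind the component form of both sides. Write $\omega=\sideset{}{'}\sum_{J}\omega_{J}dx^{J}$. Using the representation of the heat kernel in the frame $\{dx^I\}_I$ and the pairing $(dx^J)^*(dx^J)=1$, the left-hand side is
\begin{equation*}
\begin{aligned}
A_{(k),p}^{r}(t)\omega(x)
=\sideset{}{'}\sum_{I,J}\left(\int A_{(k),p}^{r}\phantom{}_{I,J}(t,x,z)\,\omega_{J}(z)\,dz\right)dx^{I}.
\end{aligned}
\end{equation*}
For the right-hand side, put $u(t,y):=e^{-\frac{t}{k}\Delta_{k}^{(r)}}\omega_{[k]}(y)$, whose $I$-component in $U_{p}^{k}$ reads
\begin{equation*}
\begin{aligned}
u_{I}(t,y)=\sideset{}{'}\sum_{J}\int e^{-\frac{t}{k}\Delta_{k}^{(r)}}\phantom{}_{I,J}(y,z')\,\omega_{J}(\sqrt{k}z')\,dz'.
\end{aligned}
\end{equation*}
Then by definition of $(\cdot)_{[1/k]}$ one has $\bigl(u\bigr)_{[1/k]}(x)=\sideset{}{'}\sum_{I} u_{I}(t,x/\sqrt{k})\,dx^{I}$.

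The final step is the change of variables $z'=z/\sqrt{k}$, which introduces the Jacobian factor $k^{-n/2}dz$ and replaces $y=x/\sqrt{k}$ in the kernel. Combining these two ingredients gives
\begin{equation*}
\begin{aligned}
u_{I}(t,x/\sqrt{k})=\sideset{}{'}\sum_{J}\int k^{-\frac{n}{2}}e^{-\frac{t}{k}\Delta_{k}^{(r)}}\phantom{}_{I,J}\bigl(x/\sqrt{k},z/\sqrt{k}\bigr)\,\omega_{J}(z)\,dz=\sideset{}{'}\sum_{J}\int A_{(k),p}^{r}\phantom{}_{I,J}(t,x,z)\,\omega_{J}(z)\,dz,
\end{aligned}
\end{equation*}
matching the left-hand side component by component.

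There is no real obstacle here; the mildly delicate point is bookkeeping around the bracket operations $(\cdot)_{[k]}$ and $(\cdot)_{[1/k]}$ and the tensorial structure of the kernel in $\bigwedge^{r}T^{*}_{x}\mathbb{R}^{n}\otimes(\bigwedge^{r}T^{*}_{y}\mathbb{R}^{n})^{*}$. Once both sides are written component-wise, the proposition reduces to the scaling identity for the kernel $A_{(k),p}^{r}\phantom{}_{I,J}(t,x,y)=k^{-n/2}e^{-\frac{t}{k}\Delta_{k}^{(r)}}\phantom{}_{I,J}(x/\sqrt{k},y/\sqrt{k})$ paired with the Jacobian factor $k^{-n/2}$ from the substitution, so the identity follows immediately.
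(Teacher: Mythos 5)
Your argument is correct and follows the same route as the paper's: both reduce to the change of variables $z\mapsto\sqrt{k}z$ paired with the definition of the scaled kernel, with the Jacobian $k^{n/2}$ cancelling the prefactor $k^{-n/2}$. The only difference is cosmetic — you unpack both sides into component functions in the frame $\{dx^I\}$, whereas the paper does the substitution directly at the level of the kernel as a $\bigwedge^r T^*\mathbb{R}^n\boxtimes(\bigwedge^r T^*\mathbb{R}^n)^*$-valued object; the underlying computation is identical.
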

\begin{proof}
By change of variables, for each $\omega\in \Omega^{r}_{c}\left(B_{k^{\varepsilon}}\left(0\right)\right)$, we deduce
\begin{equation*}
\begin{aligned}
\left(A_{\left(k\right),p}^{r}\left(t\right)\omega\right)\left(x\right) 
& = k^{-\frac{n}{2}} \int_{B_{k^{\varepsilon}}\left(p\right)} e^{-\frac{t}{k}\Delta_{k}^{\left(r\right)}}\left(\frac{x}{\sqrt{k}},\frac{y}{\sqrt{k}}\right) \omega\left(y\right) \ d y\\
& = \int_{B_{k^{-\frac{1}{2}+\varepsilon}}\left(p\right)} e^{-\frac{t}{k}\Delta_{k}^{\left(r\right)}}\left(\frac{x}{\sqrt{k}},y\right) \omega_{\left[k\right]}\left(y\right) \ d y\\
& = \int_{M} e^{-\frac{t}{k}\Delta_{k}^{\left(r\right)}}\left(\frac{x}{\sqrt{k}},y\right) \omega_{\left[k\right]}\left(y\right) \ d V_{M}
 = \left(e^{-\frac{t}{k}\Delta_{k}^{\left(r\right)}}\omega_{\left[k\right]}\right)_{\left[\frac{1}{k}\right]}\left(x\right).
\end{aligned}
\end{equation*}
\end{proof}

Now, we have the following important observation:
\begin{lem} \label{Lem: s.h.k satisfies h.e.}
For each $k>0$
\begin{equation*}
\begin{aligned}
\left\{ \begin{array}{l}
\frac{\partial}{\partial t} A_{\left(k\right),p}^{r}\left(t\right)\omega+ \Delta_{f,p}^{\left(r\right)} A_{\left(k\right),p}^{r}\left(t\right)\omega = 0 \\
\lim_{t\to 0^{+}} \left\| A_{\left(k\right),p}^{r}\left(t\right)\omega - \omega\right\|_{L^{2}\left(B_{k^{\varepsilon}}\left(0\right)\right)}=0
\end{array}\right. 
\end{aligned}
\end{equation*}
for each $\omega\in \Omega_{c}^{r}\left(B_{k^{\varepsilon}}\left(0\right)\right)$.
\end{lem}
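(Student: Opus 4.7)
The plan is to reduce the heat equation for $A_{(k),p}^{r}(t)$ to the heat equation for the ordinary Witten heat operator $e^{-\frac{t}{k}\Delta_{k}^{(r)}}$ by invoking Propositions \ref{Prop: Scaling formula} and \ref{Prop: Scaling formula for heat kernel}, treating the scaling $\omega \mapsto \omega_{[k]}$ and its inverse $\omega \mapsto \omega_{[1/k]}$ as conjugating isomorphisms between the two problems. The rescalings act pointwise on the base variable and are independent of $t$, so they commute with $\partial_{t}$, which is the one fact that makes the whole argument go through.

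For the PDE, first I would fix $\omega \in \Omega_{c}^{r}(B_{k^{\varepsilon}}(0))$ and write $u(t,x) := e^{-\frac{t}{k}\Delta_{k}^{(r)}}\omega_{[k]}$, which is well-defined on $\mathbb{R}^{+}\times M$ because $\omega_{[k]}$ extends by zero from $U_{p}^{k}$ to all of $M$. By Proposition \ref{Prop: Scaling formula for heat kernel}, $A_{(k),p}^{r}(t)\omega = u(t,\cdot)_{[1/k]}$. Differentiating in $t$ and using the ordinary heat equation for $\Delta_{k}^{(r)}$,
\begin{equation*}
\frac{\partial}{\partial t}A_{(k),p}^{r}(t)\omega
= \Bigl(\frac{\partial u}{\partial t}\Bigr)_{[1/k]}
= -\frac{1}{k}\bigl(\Delta_{k}^{(r)}u\bigr)_{[1/k]}.
\end{equation*}
Now I would apply Proposition \ref{Prop: Scaling formula} to $\eta := u_{[1/k]} = A_{(k),p}^{r}(t)\omega$: since $(\eta_{[k]})(x) = \eta(\sqrt{k}x) = u(x)$, the identity \eqref{Scaling formula 1} gives $\Delta_{f,p}^{(r)}\eta = \frac{1}{k}(\Delta_{k}^{(r)}u)_{[1/k]}$, which is exactly the negative of the right-hand side above. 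This yields the heat equation.

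For the initial condition, I would track the $L^{2}$ norm under rescaling. Since $(\omega_{[k]})_{[1/k]} = \omega$, we have $A_{(k),p}^{r}(t)\omega - \omega = (u(t,\cdot) - \omega_{[k]})_{[1/k]}$. A change of variables $x = \sqrt{k}y$ shows
\begin{equation*}
\bigl\| (u(t,\cdot)-\omega_{[k]})_{[1/k]} \bigr\|_{L^{2}(B_{k^{\varepsilon}}(0))}^{2}
= k^{n/2}\,\bigl\| u(t,\cdot)-\omega_{[k]} \bigr\|_{L^{2}(M)}^{2}.
\end{equation*}
Because $k$ is fixed, $t/k \to 0^{+}$ as $t\to 0^{+}$, and the defining initial condition of $e^{-\frac{t}{k}\Delta_{k}^{(r)}}$ applied to $\omega_{[k]} \in L^{2}_{r}(M)$ sends the right-hand side to zero. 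This yields the second line of the claim.

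The main routine nuisance, rather than a deep obstacle, is keeping track of the two distinct scalings (the spatial dilation $(\cdot)_{[1/k]}$ and the parabolic time rescaling $t\mapsto t/k$) and the resulting Jacobian factor $k^{n/2}$, and verifying that the support of $\omega_{[k]}$ lies well inside $U_{p}$ so that the extension-by-zero to $M$ is legitimate and Proposition \ref{Prop: Scaling formula for heat kernel} applies as stated. Everything else follows mechanically from the two scaling formulas and the commutativity of spatial rescaling with $\partial_{t}$.
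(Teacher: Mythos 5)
Your proposal is correct and follows essentially the same route as the paper: both derive the evolution equation by commuting the spatial rescaling with $\partial_{t}$, using the heat equation for $e^{-\frac{t}{k}\Delta_{k}^{(r)}}$ and Proposition~\ref{Prop: Scaling formula} to identify $\frac{1}{k}\bigl(\Delta_{k}^{(r)}\cdot\bigr)_{[1/k]}$ with $\Delta_{f,p}^{(r)}$, and both handle the initial condition by unscaling back to $M$. You are in fact slightly more careful than the paper about the Jacobian factor $k^{n/2}$ (the paper states that equality without it; also your displayed $L^{2}(M)$ norm should strictly be over $B_{k^{-1/2+\varepsilon}}(0)$, giving an inequality rather than an identity), but since $k$ is fixed as $t\to 0^{+}$ this is immaterial and the argument is the same.
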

\begin{proof}
This lemma essentially follows from \eqref{Scaling formula 1} and \eqref{Eq: Scaling formula for heat kernel}. Given $\omega  \in \Omega_{c}^{r} \left(B_{k^{\varepsilon}}\left(0\right)\right)$, we deduce
\begin{equation*}
\begin{aligned}
\frac{\partial}{\partial t} A_{\left(k\right),p}^{r}\left(t\right)\omega 
&= \frac{1}{k} \left( \frac{\partial}{\partial t}  e^{-\frac{t}{k}\Delta_{k}^{\left(r\right)}}\omega_{\left[k\right]}\right)_{\left[\frac{1}{k}\right]}
=-\frac{1}{k}\left( \Delta_{k}^{\left(r\right)}e^{-\frac{t}{k}\Delta_{k}^{\left(r\right)}}\omega_{\left[k\right]}\right)_{\left[\frac{1}{k}\right]}\\
&=-\Delta_{f,p}^{\left(r\right)}\left(e^{-\frac{t}{k}\Delta_{k}^{\left(r\right)}}\omega_{\left[k\right]}\right)_{\left[\frac{1}{k}\right]}
=-\Delta_{f,p}^{\left(r\right)}A_{\left(k\right),p}^{r}\left(t\right)\omega.
\end{aligned}
\end{equation*}
where the firste equation holds by the chain rule.
Hence, we obtain
\begin{equation*}
\begin{aligned}
\frac{\partial}{\partial t} A_{\left(k\right),p}^{r} \left(t\right) \omega + \Delta_{f,p}^{\left(r\right)} A_{\left(k\right),p}^{r}\left(t\right) \omega = 0
\end{aligned}
\end{equation*}
for each $\omega\in \Omega^{r}_{c}\left(B_{k^{\varepsilon}}\left(0\right)\right)$.

Finally, note that
\begin{equation*}
\begin{aligned}
\lim_{t\to 0^{+}}\left\| A_{\left(k\right),p}^{r}\left(t\right)\omega -\omega\right\|_{L^{2}\left(B_{k^{\varepsilon}\left(0\right)}\right)}
&=\lim_{t\to 0^{+}} \left\| e^{-\frac{t}{k}\Delta_{k}^{\left(r\right)}}\omega_{\left[k\right]}-\omega_{\left[k\right]}\right\|_{L^{2}\left(B_{k^{-\frac{1}{2}+\varepsilon}}\left(0\right)\right)}\\
&=\lim_{t\to 0^{+}} \left\| e^{-\frac{t}{k}\Delta_{k}^{\left(r\right)}}\omega_{\left[k\right]}-\omega_{\left[k\right]}\right\|_{L^{2}\left(M\right)}=0.
\end{aligned}
\end{equation*}
\end{proof}

Lemma \ref{Lem: s.h.k satisfies h.e.} motivates us to seek a local bound for scaled heat kernels stated as follows:

\begin{theo}[Local Boundedness of the Scaled Heat Kernels] \label{Main Thm 1: locally uniform boundedness of s.h.}
Given $p\in \crit\left(f\right)$, let $T$ and $K$ be compact subsets in $\mathbb{R}^{+}$ and in  $\mathbb{R}^{n}$, respectively. Then for each $l\in \mathbb{N}\cup\left\{ 0\right\}$, the sequence $\left\{A_{\left(k\right),p}^{r}\left(t,x,y\right) \right\}_{k}$ is uniformly bounded in $T\times K \times K$ with respect to the $\mathcal{C}^{l}$-norm; namely, there exists a constant $C\left(T,  K\right)>0$ such that
\begin{equation*}
\begin{aligned}
\left\| A_{\left(k\right),p}^{r} \left(t,x,y\right) \right\|_{\mathcal{C}^{l}\left(T\times K\times K\right)}\leq C\left(T,K\right)
\end{aligned}
\end{equation*}
for each $k$ sufficiently large. 
\end{theo}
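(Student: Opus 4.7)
The plan is to first establish a uniform-in-$k$ operator bound $A^r_{(k),p}(t): L^2 \to \mathcal{C}^l_{\mathrm{loc}}$ via spectral theory, elliptic regularity, and Sobolev embedding, and then upgrade this to a pointwise kernel bound by exploiting the semigroup structure inherited from the scaling formula together with the self-adjointness of $\Delta_k^{(r)}$. Starting from Proposition \ref{Prop: Scaling formula for heat kernel} and the elementary change-of-variables identity $\|u_{[\lambda]}\|_{L^2} = \lambda^{-n/4}\|u\|_{L^2}$, the spectral theorem (Theorem \ref{Spectral Theorem}) applied to the non-negative self-adjoint $\Delta_k^{(r)}$ gives $\|A^r_{(k),p}(t)\omega\|_{L^2} \leq \|\omega\|_{L^2}$ uniformly in $k$. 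Iterating Proposition \ref{Prop: Scaling formula} yields $(\Delta_{f,p}^{(r)})^m v = k^{-m}\bigl((\Delta_k^{(r)})^m v_{[k]}\bigr)_{[1/k]}$ on smooth test forms, and combined with the spectral bound $\|k^{-m}(\Delta_k^{(r)})^m e^{-(t/k)\Delta_k^{(r)}}\|_{L^2\to L^2} \leq (m/(te))^m$ this produces
\begin{equation*}
\|(\Delta_{f,p}^{(r)})^m A^r_{(k),p}(t)\omega\|_{L^2} \leq C(m,t)\|\omega\|_{L^2},
\end{equation*}
uniformly in $k$, with $C(m,t)$ locally uniform in $t > 0$.

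Since $\Delta_{f,p}^{(r)}$ is a second-order elliptic operator with smooth coefficients, the standard interior G{\aa}rding-type estimate $\|u\|_{W^{2m}(K')} \leq C(K,K',m)\bigl(\|(\Delta_{f,p}^{(r)})^m u\|_{L^2(K)} + \|u\|_{L^2(K)}\bigr)$ for $K' \subset K$, applied to $u = A^r_{(k),p}(t)\omega$ with $\omega$ supported in $K$, yields a uniform-in-$k$ Sobolev bound on $K'$. Choosing $2m > n/2 + l$ and invoking the Sobolev embedding (Theorem \ref{Thm: Sobolev embedding thm}) then delivers $\|A^r_{(k),p}(t)\omega\|_{\mathcal{C}^l(K'')} \leq C(t,l,K'')\|\omega\|_{L^2(K)}$. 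Time derivatives of $A^r_{(k),p}(t)\omega$ are converted to spatial derivatives by iterating the heat equation in Lemma \ref{Lem: s.h.k satisfies h.e.}, so the bound extends to joint derivatives in $t$ as well.

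A direct calculation from the scaling formula together with the semigroup property of $e^{-(t/k)\Delta_k^{(r)}}$ produces the semigroup identity $A^r_{(k),p}(t_1+t_2,x,y) = \int A^r_{(k),p}(t_1,x,z) \circ A^r_{(k),p}(t_2,z,y)\, dz$ on the expanding domain $\sqrt{k}\cdot M$, which for $k$ large contains any fixed compact $K \subset \mathbb{R}^n$. By the Riesz representation theorem applied to the $L^2 \to \mathcal{C}^l$ operator bound from the previous paragraph, $\|\partial^\alpha_x A^r_{(k),p}(t/2,x,\cdot)\|_{L^2}$ is uniformly bounded for $x \in K$, $|\alpha| \leq l$. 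The component symmetry $A^r_{(k),p}(t,x,y)_{I,J} = A^r_{(k),p}(t,y,x)_{J,I}$, a consequence of the self-adjointness of $\Delta_k^{(r)}$, gives the analogous bound for $\partial^\beta_y A^r_{(k),p}(t/2,\cdot,y)$. Differentiating under the integral sign in the semigroup identity and applying Cauchy-Schwarz in the $z$-variable then yields the desired uniform $\mathcal{C}^l$-bound on $T \times K \times K$.

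The main obstacle is this final step, in which the operator bound must be converted into a pointwise kernel bound in both $x$ and $y$ simultaneously: the Sobolev-embedding argument only controls derivatives in the $x$-variable of the operator output, and splitting time via the semigroup and swapping the roles of $x$ and $y$ through self-adjointness is essential. One must also verify compatibility between the scaled operator, whose natural domain grows with $k$, and the fixed compact sets $T \times K \times K$ on which the uniform bound is sought.
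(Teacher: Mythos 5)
Your route is genuinely different from the paper's. The paper proves the two-sided mapping property $\left\| \tilde{\chi} A_{(k),p}^{r}(t)\chi\omega\right\|_{2m}\leq C\left\|\omega\right\|_{-2m}$ (Theorem \ref{Critical Thm: mapping property of s.h.o.}), with G{\aa}rding's inequality applied on both the output and the dual side, and then tests against the approximate identities of Lemma \ref{Lem: subseqential convergence of seq. of norms of a.i.}: the $W^{-2m}$ side absorbs the $y$-derivatives of the approximate identity, while Sobolev embedding on the $W^{2m}$ side gives the $x$-derivatives, so both variables are handled in one pass. You instead establish only $L^{2}\to W^{2m}_{\mathrm{loc}}\hookrightarrow \mathcal{C}^{l}$, then recover the $y$-side by the semigroup factorization $A(t)=A(t/2)\circ A(t/2)$ together with kernel symmetry. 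That is a legitimate alternative and in some respects conceptually cleaner, since it replaces the duality pairing against distributional test forms by a Cauchy--Schwarz estimate between two half-time kernels.

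There is, however, a gap in the semigroup step that you flag but do not close. The identity $A_{(k),p}^{r}(t_{1}+t_{2},x,y)=\int A_{(k),p}^{r}(t_{1},x,z)A_{(k),p}^{r}(t_{2},z,y)\,dz$ is inherited from $e^{-(t/k)\Delta_{k}^{(r)}}$ on the \emph{compact manifold} $M$, so the $z$-integral runs over all of $M$ (equivalently, over the scaled manifold), not merely over $B_{k^{\varepsilon}}(0)$. But your Riesz-representation step produces $\left\|\partial_{x}^{\alpha}A_{(k),p}^{r}(t/2,x,\cdot)\right\|_{L^{2}}$ only for the $L^{2}$ space matching the domain of the operator bound you established, and you state that bound as $\left\|A_{(k),p}^{r}(t)\omega\right\|_{\mathcal{C}^{l}(K'')}\leq C\left\|\omega\right\|_{L^{2}(K)}$ for $\omega$ supported in a fixed compact $K$. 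This gives the $L^{2}(K)$-norm of the half-time kernel, not the $L^{2}$-norm over the full expanding domain; the contribution to the $z$-integral from $z$ outside $B_{k^{\varepsilon}}(0)$ (the image of $M\setminus U_{p}^{k}$) is left unbounded, and bounding it by Cauchy--Schwarz feeds back into the on-diagonal quantity one is trying to control. The fix is to extend the operator to all of $L^{2}_{r}(M)$ by defining $\tilde A(t)\omega(x)=\bigl(e^{-(t/k)\Delta_{k}^{(r)}}\omega\bigr)(\varphi_{p}^{-1}(x/\sqrt{k}))$: the G{\aa}rding and Sobolev steps are local in $x$, so they do not care where $\omega$ is supported, and the resulting $L^{2}_{r}(M)\to\mathcal{C}^{l}(K)$ operator norm carries a factor $k^{n/4}$ that is exactly cancelled by the $k^{-n/2}$ normalization in the kernel. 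You allude to this tension in your final paragraph, but you should carry the cancellation through explicitly; as written, the uniform-in-$k$ assertion for the $L^{2}\to\mathcal{C}^{l}$ bound is correct only for the restricted domain, which does not directly feed the semigroup identity.
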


We will prove Theorem \ref{Main Thm 1: locally uniform boundedness of s.h.} in Section \ref{m.p.s.h.o}.

\subsection{Locally Uniform Bound for Scaled Heat Kernels} \label{m.p.s.h.o}

We find a locally uniform bound through investigating the scaled heat operators. The key of finding such a bound is that we manage to show the scaled heat operators are "locally uniformly bounded" by a constant independent of $k$. We illustrate it in what we call the mapping property as follows:

\begin{theo}[Mapping Property] \label{Critical Thm: mapping property of s.h.o.}  
Given a compact subset $K\subset \mathbb{R}^{n}$, choose a bounded open subset $U$ such that $K\subset U$ and choose a cut-off function $\chi\in \mathcal{C}^{\infty}_{c}\left(U\right)$ such that $\chi=1$ in $K$. Then for each $p\in \crit\left(f\right)$ and for each cut-off function $\tilde{\chi}\in \mathcal{C}^{\infty}_{c}\left(U\right)$,  there exists a constant $C\left(\chi,\tilde{\chi},t\right)>0$ such that
\begin{equation*}
\begin{aligned}
\left\| \tilde{\chi} A_{\left(k\right),p}^{r}\left(t\right)\chi \omega \right\|_{2m}\leq C\left(\chi,\tilde{\chi},t\right)\left\| \omega\right\|_{-2m},
\end{aligned}
\end{equation*}
where $C\left(\chi,\tilde{\chi},t\right)$ depends on the choices of $\chi$ and $\tilde{\chi}$ and smoothly on $t$ only, for each $\omega\in \Omega^{r}_{c}\left(U\right)$, if $k$ large enough.
\end{theo}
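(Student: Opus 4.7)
The plan is to combine three ingredients: uniform $L^{2}$-bounds on $\partial_{t}^{j}A_{\left(k\right),p}^{r}\left(t\right)$ from the spectral theorem; interior elliptic regularity for the model Laplacian $\Delta_{f,p}^{\left(r\right)}$, which is $k$-independent and elliptic with smooth coefficients on the bounded set $U$; and a duality argument using the self-adjointness of $A_{\left(k\right),p}^{r}\left(t\right)$. Concretely, I would first prove an $L^{2}\to W^{2m}$ bound, then dualize to a $W^{-2m}\to L^{2}$ bound, and finally feed the result back into elliptic regularity to reach the full $W^{-2m}\to W^{2m}$ statement.

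For the forward estimate, the spectral theorem (Theorem \ref{Spectral Theorem}) applied to the self-adjoint non-negative $\Delta_{k}^{\left(r\right)}$ identifies $\partial_{t}^{j}e^{-\frac{t}{k}\Delta_{k}^{\left(r\right)}}$ with multiplication by $\left(-s/k\right)^{j}e^{-ts/k}$, whose operator norm is bounded by $\sup_{s\geq 0}\left(s/k\right)^{j}e^{-ts/k}=\left(j/(te)\right)^{j}$, independently of $k$. The scaling formula (Proposition \ref{Prop: Scaling formula for heat kernel}) transfers this to the uniform $L^{2}$ bound $\left\|\partial_{t}^{j}A_{\left(k\right),p}^{r}\left(t\right)\omega\right\|_{0}\leq (j/(te))^{j}\left\|\omega\right\|_{0}$. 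Since Lemma \ref{Lem: s.h.k satisfies h.e.} gives $\Delta_{f,p}^{\left(r\right)}A_{\left(k\right),p}^{r}\left(t\right)\omega=-\partial_{t}A_{\left(k\right),p}^{r}\left(t\right)\omega$, I would iterate standard interior elliptic regularity for $\Delta_{f,p}^{\left(r\right)}$ along a chain of nested cutoffs $\tilde{\chi}\prec\chi_{0}\prec\cdots\prec\chi_{m}$ in $\mathcal{C}^{\infty}_{c}\left(U\right)$ (meaning $\chi_{j}\equiv 1$ on $\supp\chi_{j-1}$) to obtain, for every $j,m\geq 0$ and $\omega\in\Omega^{r}_{c}\left(U\right)$,
\begin{equation*}
\left\|\tilde{\chi}\,\partial_{t}^{j}A_{\left(k\right),p}^{r}\left(t\right)\chi\omega\right\|_{2m}\leq C\left(t,j,m,\tilde{\chi},\chi\right)\left\|\omega\right\|_{0}
\end{equation*}
uniformly in $k$.

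Next, a change of variables together with the self-adjointness of $e^{-\frac{t}{k}\Delta_{k}^{\left(r\right)}}$ shows that $A_{\left(k\right),p}^{r}\left(t\right)$ is self-adjoint on $L^{2}\left(B_{k^{\varepsilon}}\left(0\right)\right)$, and hence so is each $\partial_{t}^{j}A_{\left(k\right),p}^{r}\left(t\right)$. For any $g\in\Omega^{r}_{c}\left(U\right)$, the forward estimate applied (with roles of $\chi$ and $\tilde{\chi}$ swapped) to $\chi\,\partial_{t}^{j}A_{\left(k\right),p}^{r}\left(t\right)\tilde{\chi}g$ gives
\begin{equation*}
\left|\left(\tilde{\chi}\,\partial_{t}^{j}A_{\left(k\right),p}^{r}\left(t\right)\chi\omega,\,g\right)\right|=\left|\left(\omega,\,\chi\,\partial_{t}^{j}A_{\left(k\right),p}^{r}\left(t\right)\tilde{\chi}g\right)\right|\leq C\left(t,j,m\right)\left\|\omega\right\|_{-2m}\left\|g\right\|_{0},
\end{equation*}
so taking the supremum over $g$ yields the dual bound $\left\|\tilde{\chi}\,\partial_{t}^{j}A_{\left(k\right),p}^{r}\left(t\right)\chi\omega\right\|_{0}\leq C\left(t,j,m\right)\left\|\omega\right\|_{-2m}$. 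One final application of the same iterated interior elliptic regularity to $A_{\left(k\right),p}^{r}\left(t\right)\chi\omega$ — now bounding each $L^{2}$-term $\left\|\chi_{j}\partial_{t}^{j}A_{\left(k\right),p}^{r}\left(t\right)\chi\omega\right\|_{0}$ by the dual estimate rather than by $\left\|\omega\right\|_{0}$ — produces
\begin{equation*}
\left\|\tilde{\chi}A_{\left(k\right),p}^{r}\left(t\right)\chi\omega\right\|_{2m}\leq C\sum_{j=0}^{m}\left\|\chi_{j}\partial_{t}^{j}A_{\left(k\right),p}^{r}\left(t\right)\chi\omega\right\|_{0}\leq C\left(t,m,\tilde{\chi},\chi\right)\left\|\omega\right\|_{-2m},
\end{equation*}
as desired. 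The main obstacle is tracking $k$-uniformity of all constants: the elliptic regularity constants depend only on $U$ and on $\mathcal{C}^{l}$-bounds of the coefficients of the $k$-independent operator $\Delta_{f,p}^{\left(r\right)}$, so they are automatic once $k$ is so large that $U\subset B_{k^{\varepsilon}}\left(0\right)$, while the spectral bound $(j/(te))^{j}$ is $k$-independent precisely because the time scaling $t/k$ and the factor $(1/k)^{j}$ conspire with the spectrum of $\Delta_{k}^{\left(r\right)}$ to cancel the $k$-dependence — this cancellation is the very reason for introducing the scaled heat kernel.
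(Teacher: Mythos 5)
Your proposal is correct and takes essentially the same route as the paper's proof: both combine the spectral theorem with the scaling identities for $k$-uniform $L^{2}$ bounds, iterated elliptic regularity (which the paper invokes under the name Gårding's inequality) to pass to the $W^{2m}$-norm, and duality (equivalently, self-adjointness of $A_{\left(k\right),p}^{r}\left(t\right)$ on $L^{2}$) to lower the regularity required of $\omega$. The only cosmetic difference is that the paper works directly with powers $\left(\Delta_{f,p}^{\left(r\right)}\right)^{m}$ and inlines the duality step inside the Gårding reduction, whereas you use $\partial_{t}^{j}$ (the same thing via the heat equation of Lemma \ref{Lem: s.h.k satisfies h.e.}) and cleanly separate a forward $L^{2}\to W^{2m}$ estimate, its dual $W^{-2m}\to L^{2}$ estimate, and a final elliptic-regularity recombination.
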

\begin{proof}

Before we start, we point out that the following reasoning works for $k$ large enough for us to have $U\subset B_{k^{\varepsilon}}\left(0\right)$, $\varepsilon\in \left(0,\frac{1}{2}\right)$.

Firstly, by the Gårding's inequality, we obtain
\begin{equation*}
\begin{aligned}
\left\| \tilde{\chi}A^{r}_{\left(k\right),p}\left(t\right)\chi \omega\right\|_{2m}
\leq C_{1}\left(\tilde{\chi}\right) \left\| \chi_{1}\left(\Delta_{f,p}^{\left(r\right)}\right)^{m}A_{\left(k\right),p}^{r}\left(t\right) \chi \omega\right\|_{0} + C_{2}\left(\tilde{\chi}\right)\left\| \chi_{2}A_{\left(k\right),p}^{r}\left(t\right)\chi\omega\right\|_{0},
\end{aligned}
\end{equation*}
where $\chi_{1},\chi_{2}\in \mathcal{C}^{\infty}_{c}\left(U\right)$ are the cut-off functions chosen to satisfy $\chi_{1}=1=\chi_{2}$ in $\supp \tilde{\chi}$. Therefore, we reduce to the $L^{2}$-norm estimates for the two term on the right. 

Next, to estimate the $L^{2}$-norm of the first term, observe that
\begin{equation*}
\begin{aligned}
\left\| \chi_{1}\left(\Delta_{f,p}^{\left(r\right)}\right)^{m}A_{\left(k\right),p}^{r}\left(t\right)\chi \omega\right\|_{0} := \sup_{\eta\in \Omega^{r}_{c}\left(U\right), \eta\neq 0} \frac{\left|\left( \chi_{1}\left(\Delta_{f,p}^{\left(r\right)}\right)^{m}A_{\left(k\right),p}^{r}\left(t\right)\chi \omega \bigg| \eta\right)\right|}{\left\|\eta\right\|_{0}},
\end{aligned}
\end{equation*}
so we consider the inner product on the right hand side. By Theorem \ref{Spectral Theorem}, \eqref{Scaling formula 1} and \eqref{Eq: Scaling formula for heat kernel}, we see that
\begin{equation*}
\begin{aligned}
\left| \left( \chi_{1}\left(\Delta_{f,p}^{\left(r\right)}\right)^{m}A_{\left(k\right),p}^{r}\left(t\right)\chi \omega \bigg| \eta\right)\right|
&=k^{-m}\left| \left( \left(\chi_{1}\right)_{\left[k\right]}\left( \Delta_{k}^{\left(r\right)}\right)^{m} e^{-\frac{t}{k}\Delta_{k}^{\left(r\right)}}\chi_{\left[k\right]}\omega_{\left[k\right]} \bigg| \eta_{\left[k\right]} \right)\right|\\
&=k^{-m}\left| \left( \omega_{\left[k\right]} \bigg| \chi_{\left[k\right]} e^{-\frac{t}{k}\Delta_{k}^{\left(r\right)}}\left(\Delta_{k}^{\left(r\right)}\right)^{m} \left(\chi_{1}\right)_{\left[k\right]}\eta_{\left[k\right]} \right)\right|\\
&=\left| \left( \omega \bigg| \chi A_{\left(k\right),p}^{r}\left(t\right)\left(\Delta_{f,p}^{\left(r\right)}\right)^{m} \chi_{1}\eta \right)\right|
\end{aligned}
\end{equation*}
for each $\eta\in \Omega^{r}_{c}\left(U\right)$ with $\eta\neq 0$. Moreover, by definition of Sobolev norms, we see that
\begin{equation*}
\begin{aligned}
\left| \left( \omega \bigg| \chi A_{\left(k\right),p}^{r}\left(t\right)\left(\Delta_{f,p}^{\left(r\right)}\right)^{m} \chi_{1}\eta \right)\right|
\leq\left\| \omega\right\|_{-2m}\left\| \chi A_{\left(k\right),p}^{r}\left(t\right)\left(\Delta_{f,p}^{\left(r\right)}\right)^{m} \chi_{1}\eta \right\|_{2m}.
\end{aligned}
\end{equation*}
Again, by the Gårding's inequality, we obtain
\begin{equation*}
\begin{aligned}
\left\| \chi A_{\left(k\right),p}^{r}\left(t\right)\left(\Delta_{f,p}^{\left(r\right)}\right)^{m} \chi_{1}\eta \right\|_{2m}
&\leq C_{3}\left(\chi\right) \left\| \chi_{3}\left(\Delta_{f,p}^{\left(r\right)}\right)^{m} A_{\left(k\right),p}^{r}\left(t\right) \left(\Delta_{f,p}^{\left(r\right)}\right)^{m} \chi_{1}\eta \right\|_{0} \\
&+C_{4}\left(\chi\right) \left\| \chi_{4}A_{\left(k\right),p}^{r}\left(t\right) \left(\Delta_{f,p}^{\left(r\right)}\right)^{m} \chi_{1}\eta \right\|_{0},
\end{aligned}
\end{equation*}
where $\chi_{3},\chi_{4}\in \mathcal{C}^{\infty}_{c}\left(U\right)$ are the cut-off functions chosen to satisfy $\chi_{3}=1=\chi_{4}$ in $\supp \chi$. Moreover, by Theorem \ref{Spectral Theorem}, \eqref{Scaling formula 1} and \eqref{Eq: Scaling formula for heat kernel}, we derive
\begin{equation*}
\begin{aligned}
\left\| \chi_{3}\left(\Delta_{f,p}^{\left(r\right)}\right)^{m} A_{\left(k\right),p}^{r}\left(t\right) \left(\Delta_{f,p}^{\left(r\right)}\right)^{m} \chi_{1}\eta \right\|_{0}^{2} 
&\leq k^{\frac{n}{2}-4m} \left\| \left( \Delta_{k}^{\left(r\right)}\right)^{m} e^{-\frac{t}{k}\Delta_{k}^{\left(r\right)}} \left( \Delta_{k}^{\left(r\right)}\right)^{m} \left(\chi_{1}\eta\right)_{\left[k\right]} \right\|_{L^{2}\left(M\right)}^{2}\\
&\leq k^{\frac{n}{2}} \int_{\mathcal{S}\times \mathbb{N}} \left( \frac{s}{k}\right)^{4m}e^{-2t\frac{s}{k}} \left|g\left(s,n\right)\right|^{2} d\mu \\
&\leq k^{\frac{n}{2}} C_{5}\left(t\right) \int_{\mathcal{S}\times \mathbb{N}} \left|g\left(s,n\right)\right|^{2} d\mu \\
&= C_{5}\left(t\right) \left\| \chi_{1}\eta\right\|_{0}^{2} \leq C_{5}\left(t\right) \left\| \eta\right\|_{0}^{2},
\end{aligned}
\end{equation*}
where $C_{5}\left(t\right) = \left(\frac{2m}{t}\right)^{4m}e^{-4m}>0$ depends smoothly on $t$. Hence, we conclude
\begin{equation}\label{Ineq: L^2 estimate 1 for dual 1}
\begin{aligned}
\left\| \chi_{3}\left(\Delta_{f,p}^{\left(r\right)}\right)^{m} A_{\left(k\right),p}^{r}\left(t\right) \left(\Delta_{f,p}^{\left(r\right)}\right)^{m} \chi_{1}\eta \right\|_{0}
\leq C_{6}\left(t\right) \left\|\eta\right\|_{0},
\end{aligned}
\end{equation}
where $C_{6}\left(t\right)=\sqrt{C_{5}\left(t\right)}$. Similarly, we can obtain
\begin{equation}\label{Ineq: L^2 estimate 1 for dual 2}
\begin{aligned}
\left\| \chi_{4}A_{\left(k\right),p}^{r}\left(t\right) \left(\Delta_{f,p}^{\left(r\right)}\right)^{m} \chi_{1}\eta \right\|_{0} \leq C_{7}\left(t\right) \left\| \eta\right\|_{0},
\end{aligned}
\end{equation}
where $C_{7}\left(t\right)$ depends smoothly on $t$. By \eqref{Ineq: L^2 estimate 1 for dual 1} and \eqref{Ineq: L^2 estimate 1 for dual 2}, we conclude
\begin{equation*}
\begin{aligned}
\left\| \chi A_{\left(k\right),p}^{r}\left(t\right) \left( \Delta_{f,p}^{\left(r\right)}\right)^{m} \chi_{1}\eta\right\|_{2m} \leq C_{8}\left(\chi,t\right) \left\|\eta\right\|_{0},
\end{aligned}
\end{equation*}
for each $\eta\in \Omega^{r}_{c}\left(U\right)$ with $\eta\neq 0$, which in turn, implies
\begin{equation}\label{Ineq: Reduced term 1 by Garding}
\begin{aligned}
\left\| \chi_{1}\left(\Delta_{f,p}^{\left(r\right)}\right)^{m}A_{\left(k\right),p}^{r}\left(t\right)\chi \omega\right\|_{0} \leq C_{9}\left(\chi,t\right) \left\| \omega\right\|_{-2m},
\end{aligned}
\end{equation}
where $C_{9}\left(\chi,t\right)$ depends on $\chi$ and smoothly on $t$.

Using the similar argument, we can obtain
\begin{equation}\label{Ineq: Reduced term 2 by Garding}
\begin{aligned}
\left\| \chi_{2}A_{\left(k\right),p}^{r}\left(t\right)\chi \omega\right\|_{0}\leq C_{10}\left(\chi,t\right) \left\| \omega\right\|_{-2m},
\end{aligned}
\end{equation}
where $C_{9}\left(\chi,t\right)$ depends on $\chi$ and smoothly on $t$.

Finally, by \eqref{Ineq: Reduced term 1 by Garding} and \eqref{Ineq: Reduced term 2 by Garding}, we have established
\begin{equation*}
\begin{aligned}
\left\| \tilde{\chi} A_{\left(k\right),p}^{r}\left(t\right)\chi \omega \right\|_{2m} \leq C\left(\chi, \tilde{\chi},t\right) \left\| \omega\right\|_{-2m},
\end{aligned}
\end{equation*}
where $C\left(\chi, \tilde{\chi},t\right)$ depends on $\chi, \tilde{\chi}$ and smoothly on $t$, for each $\omega \in \Omega^{r}_{c}\left(U\right)$ with $\supp \omega \subset K$. 

\end{proof}

Now, to show Theorem \ref{Main Thm 1: locally uniform boundedness of s.h.}, 
let us recall the definition of approximate identities. Let $B_{1}\left(0\right)$ be the unit Euclidean ball centered at 0 and choose a cut-off function $\chi\in \mathcal{C}^{\infty}_{c}\left(B_{1}\left(0\right)\right)$ such that
\begin{equation*}
\begin{aligned}
\int_{\mathbb{R}^{n}} \chi\left(x\right) \ d x  = 1.
\end{aligned}
\end{equation*}
For each $x_{0}\in \mathbb{R}^{n}$ and for each $\delta>0$, define
\begin{equation} \label{Defi: approximate identity}
\begin{aligned}
\chi_{x_{0},\delta}\left(x\right) := \delta^{-n} \chi \left(\frac{x-x_{0}}{\delta} \right) \in \mathcal{C}^{\infty}_{c}\left(B_{\delta}\left(x_{0}\right)\right).
\end{aligned}
\end{equation}
We call the family $\left\{\chi_{x_{0},\delta}\right\}_{\delta}$ an approximate identity with respect to the point $x_{0}$.
Note that a change of variable gives
\begin{equation*}
\begin{aligned}
\int_{\mathbb{R}^{n}} \chi_{x_{0},\delta}\left(x\right)  \ d x  
= \int_{\mathbb{R}^{n}} \chi\left(x\right)  \ d x 
= 1.
\end{aligned}
\end{equation*}

An important feature of an approximate identity is illustrated as below.

\begin{lem}\label{Lem: subseqential convergence of seq. of norms of a.i.} 
Given a multi-index $\alpha$, there exists a constant $C>0$ such that for each $x_{0}\in \mathbb{R}^{n}$, for each $\alpha$ and for each $I$, set
\begin{equation}\label{Eq: Approximated identity for form}
\begin{aligned}
\omega_{\alpha,x_{0},I,\delta} = \partial^{\alpha}\chi_{x_{0},\delta} \ d x^{I} \in \Omega^{r}_{c}\left(B_{\delta}\left(x_{0}\right)\right)
\end{aligned}
\end{equation}
and we obtain
\begin{equation*}
\begin{aligned}
\limsup_{\delta\to 0} \left\| \omega_{\alpha, x_{0},I,\delta}\right\|_{-m} \leq C,
\end{aligned}
\end{equation*}
if $m$ is sufficiently large. 
\end{lem}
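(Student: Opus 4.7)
The approach passes through Fourier analysis and reduces to the scalar estimate of Proposition~\ref{Prop: estimate on negative order sobolev norms}. The form $\omega_{\alpha,x_{0},I,\delta}=\partial^{\alpha}\chi_{x_{0},\delta}\,dx^{I}$ has a single nonzero component, so its Sobolev norm as a form on $\mathbb{R}^{n}$ equals the scalar Sobolev norm of the coefficient $\partial^{\alpha}\chi_{x_{0},\delta}$; the frame index $I$ therefore plays no role in the estimate.

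First I would compute the Fourier transform. Writing $\chi_{x_{0},\delta}(x)=\delta^{-n}\chi((x-x_{0})/\delta)$, the standard translation-dilation rules give
\[
\widehat{\partial^{\alpha}\chi_{x_{0},\delta}}(\xi)
=(\sqrt{-1}\,\xi)^{\alpha}\,e^{-\sqrt{-1}\,x_{0}\cdot\xi}\,\widehat{\chi}(\delta\xi),
\]
so that $\bigl|\widehat{\partial^{\alpha}\chi_{x_{0},\delta}}(\xi)\bigr|^{2}=|\xi^{\alpha}|^{2}|\widehat{\chi}(\delta\xi)|^{2}$ is \emph{independent of $x_{0}$}. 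This is the crucial observation underwriting the uniformity in the basepoint $x_{0}$. Proposition~\ref{Prop: estimate on negative order sobolev norms} then gives
\[
\|\omega_{\alpha,x_{0},I,\delta}\|_{-m}^{2}
\leq \int_{\mathbb{R}^{n}}(1+|\xi|^{2})^{-m}\,|\xi^{\alpha}|^{2}\,|\widehat{\chi}(\delta\xi)|^{2}\,d\xi.
\]

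Next, I would dominate the integrand by the pointwise bounds $|\xi^{\alpha}|^{2}\leq (1+|\xi|^{2})^{|\alpha|}$ and $|\widehat{\chi}(\delta\xi)|\leq \|\widehat{\chi}\|_{L^{\infty}(\mathbb{R}^{n})}$, so that
\[
\|\omega_{\alpha,x_{0},I,\delta}\|_{-m}^{2}
\leq \|\widehat{\chi}\|_{L^{\infty}(\mathbb{R}^{n})}^{2}\int_{\mathbb{R}^{n}}(1+|\xi|^{2})^{|\alpha|-m}\,d\xi,
\]
with a right-hand side manifestly independent of $\delta$ and $x_{0}$. As soon as $m$ is chosen with $2(m-|\alpha|)>n$, i.e.\ $m>|\alpha|+n/2$, the integral on the right is finite, and I may set
\[
C:=\|\widehat{\chi}\|_{L^{\infty}(\mathbb{R}^{n})}\left(\int_{\mathbb{R}^{n}}(1+|\xi|^{2})^{|\alpha|-m}\,d\xi\right)^{1/2}.
\]
Then $\|\omega_{\alpha,x_{0},I,\delta}\|_{-m}\leq C$ for every $\delta>0$, $x_{0}\in\mathbb{R}^{n}$ and $I$, and in particular $\limsup_{\delta\to0}\|\omega_{\alpha,x_{0},I,\delta}\|_{-m}\leq C$.

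The only conceptual subtlety is that, in view of the concentration of $\chi_{x_{0},\delta}$ as $\delta\to 0$, one might expect to need a more delicate analysis via the substitution $\zeta=\delta\xi$ and a dominated convergence argument on the resulting integral $\int(1+|\zeta/\delta|^{2})^{-m}|\zeta^{\alpha}|^{2}|\widehat{\chi}(\zeta)|^{2}\,\delta^{-n-2|\alpha|}\,d\zeta$, tracking the precise cancellation of the various powers of $\delta$. In fact, no such refinement is required: the boundedness of $\widehat{\chi}$ together with the crude pointwise bound $(1+|\xi|^{2})^{-m}|\xi^{\alpha}|^{2}\leq(1+|\xi|^{2})^{|\alpha|-m}$ already supplies a $\delta$-free integrable dominator, and the hypothesis ``$m$ sufficiently large'' translates exactly into the integrability constraint $m>|\alpha|+n/2$.
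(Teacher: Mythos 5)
Your proposal is correct and follows essentially the same route as the paper: reduce via Proposition~\ref{Prop: estimate on negative order sobolev norms} to a Fourier integral, bound the Fourier transform pointwise by $|\xi|^{|\alpha|}$ times a constant independent of $x_{0}$ and $\delta$, and then integrate $(1+|\xi|^{2})^{|\alpha|-m}$, which is finite once $m>|\alpha|+\frac{n}{2}$. The paper obtains the $x_{0}$- and $\delta$-free bound by integrating by parts and using $\bigl|\widehat{\chi_{x_{0},\delta}}(\xi)\bigr|\leq\int|\chi|\,dx$, whereas you make the translation-dilation structure explicit via $\widehat{\chi_{x_{0},\delta}}(\xi)=e^{-\sqrt{-1}x_{0}\cdot\xi}\widehat{\chi}(\delta\xi)$ and bound by $\|\widehat{\chi}\|_{L^{\infty}}$; these two bounds are interchangeable, so the argument is the same in substance.
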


\begin{proof}

By Proposition \ref{Prop: estimate on negative order sobolev norms}, it suffices to show there exists $C>0$ such that 
\begin{equation*}
\begin{aligned}
\int_{\mathbb{R}^{n}}\left(1+\left|\xi\right|^{2}\right)^{-m} \left| \widehat{\omega_{x_{0},\alpha,I,\delta}}\left(\xi\right) \right|^{2} \ d\xi
\leq C
\end{aligned}
\end{equation*}
for each $\delta$ and for each $x_{0}$, if $m$ large enough.

First, observe that
\begin{equation*}
\begin{aligned}
\left| \widehat{\omega_{x_{0},\alpha,I,\delta}}\left(\xi\right) \right|^{2} 
= \left| \int_{\mathbb{R}^{n}} e^{-\sqrt{-1}x\cdot\xi} \partial^{\alpha} \chi_{x_{0},\delta}\left(x\right) \ dx \right|^{2}
\leq \left|\xi\right|^{2\left|\alpha\right|} \int_{\mathbb{R}^{n}} \chi \ dx = \left|\xi\right|^{2\left|\alpha\right|}.
\end{aligned}
\end{equation*}

Now, if $m\geq \frac{n}{2}+\left|\alpha\right|+1$, then 
\begin{equation*}
\begin{aligned}
\int_{\mathbb{R}^{n}}\left(1+\left|\xi\right|^{2}\right)^{-m} \left| \widehat{\omega_{x_{0},\alpha,I,\delta}}\left(\xi\right) \right|^{2} d\xi
&\leq \int_{\mathbb{R}^{n}} \left(1+\left|\xi\right|^{2}\right)^{-m}\left|\xi\right|^{2\left|\alpha\right|} d\xi \\
&\leq C_{1}\int_{\mathbb{R}^{n}} \left(1+\left|\xi \right|^{2}\right)^{-m+\left|\alpha\right|} d\xi \leq C_{2},
\end{aligned}
\end{equation*}
where $C_{1},C_{2}$ are independent of $\delta$ and the choice of $x_{0}$. Hence, we have established Lemma \ref{Lem: subseqential convergence of seq. of norms of a.i.}.

\end{proof}

\begin{proof}[Proof of Theorem \ref{Main Thm 1: locally uniform boundedness of s.h.}]

Let $T$ be a compact interval in $\mathbb{R}^{+}$ and $K$ be a compact subset in $\mathbb{R}^{n}$. We choose $M_{K}\in \mathbb{N}$ such that
\begin{equation*}
\begin{aligned}
K\subset B_{k^{\varepsilon}}\left(0\right)
\end{aligned}
\end{equation*}
for all $k\geq M_{K}$ and, without loss of generosity, discard the rest $k$'s for which $K\not \subset B_{k^{\varepsilon}}\left(0\right)$. 

Write 
\begin{equation*}
\begin{aligned}
A_{\left(k\right),p}^{r}\left(t,x,y\right) 
= \sideset{}{'}\sum_{I,J} A_{\left(k\right),p}^{r}\left(t,x,y\right) dx^{I}\left(x\right)\otimes \left(dx^{J}\right)^{*}\left(y\right).
\end{aligned}
\end{equation*}
To establish Theorem \ref{Main Thm 1: locally uniform boundedness of s.h.}, it suffices to show its component functions are locally uniformly bounded; in other words, we show for any two multi-indices $I',J'$, there exists a constant $C\left(T,K\right)$ such that
\begin{equation*}
\begin{aligned}
\left\| A_{\left(k\right)}^{r} \phantom{}_{I',J'}\left(t,x,y\right)\right\|_{\mathcal{C}^{l}\left(T\times K\times K\right)} \leq C\left(T,K\right).
\end{aligned}
\end{equation*}

First of all, we show that for each $t_{0}\in T$, for each $y_{0}\in K$, for each $\alpha$, and for each $J'$, 
\begin{equation}\label{Eq: Approximated identity applied to A_(k)}
\begin{aligned}
\lim_{\delta\to 0} \left\| A_{\left(k\right),p}^{r}\left(t_{0}\right)\omega_{\alpha,y_{0},J',\delta} - \sideset{}{'}\sum_{I} \partial^{\alpha}_{y}A_{\left(k\right),p}\phantom{}_{I,J'}\left(t_{0},x,y_{0}\right) dx^{I}\right\|_{\mathcal{C}^{l}\left(K\right)} = 0,
\end{aligned}
\end{equation}  
where $\omega_{\alpha,y_{0},J',\delta}$ is as defined in \eqref{Eq: Approximated identity for form}. To see it, for each $x\in K$, note that
\begin{equation*}
\begin{aligned}
&\sum_{\left|\alpha\right|\leq l}\left( \sideset{}{'}\sum_{I} \left| \partial_{x}^{\gamma}\left(\int_{\mathbb{R}^{n}} A_{\left(k\right),p}^{r}\phantom{}_{I,J'}\left(t_{0},x,y\right)\partial^{\alpha}_{y}\chi_{y_{0},\delta}\  dy -\partial^{\alpha}_{y}A_{\left(k\right),p}^{r} \phantom{}_{I,J'}\left(t_{0},x,y_{0}\right)\right) \right|^{2}\right)^{\frac{1}{2}}\\
&=\sum_{\left|\alpha\right|\leq l}\left( \sideset{}{'}\sum_{I} \left| \left(\int_{\mathbb{R}^{n}} \partial_{x}^{\gamma} \left( \partial^{\alpha}_{y} A_{\left(k\right),p}^{r}\phantom{}_{I,J'}\left(t_{0},x,y\right)-\partial^{\alpha}_{y}A_{\left(k\right),p}^{r}\phantom{}_{I,J'} \left(t_{0},x,y_{0}\right)\right) \chi_{y_{0},\delta}\  dy \right) \right|^{2}\right)^{\frac{1}{2}} \\
&=\sum_{\left|\alpha\right|\leq l} \left( \sideset{}{'}\sum_{I} \left| \left(\int_{\mathbb{R}^{n}} \left( \partial_{x}^{\gamma} \partial^{\alpha}_{y} A_{\left(k\right),p}^{r}\phantom{}_{I,J'}\left(t_{0},x,\delta y+y_{0}\right)-\partial_{x}^{\gamma} \partial^{\alpha}_{y}A_{\left(k\right),p}^{r}\phantom{}_{I,J'} \left(t_{0},x,y_{0}\right)\right) \chi \  dy \right) \right|^{2}\right)^{\frac{1}{2}}.
\end{aligned}
\end{equation*}
Now, since the function $\partial_{x}^{\gamma} \partial^{\alpha}_{y} A_{\left(k\right),p}^{r}\phantom{}_{I,J'}\left(t_{0},x,y\right)$ is uniformly continuous in $K\times K$, we obtain \eqref{Eq: Approximated identity applied to A_(k)} and thus
\begin{equation}\label{Eq: Limit of C^l norm}
\begin{aligned}
\left\| \sideset{}{'}\sum_{I} \partial_{y}^{\alpha}A_{\left(k\right),p}^{r}\phantom{}_{I,J'}\left(t_{0},x,y_{0}\right) dx^{I}\right\|_{\mathcal{C}^{l}\left(K\right)} = \lim_{\delta\to 0} \left\| A_{\left(k\right),p}\left(t_{0}\right)\omega_{y_{0},\alpha,J',\delta} \right\|_{\mathcal{C}^{l}\left(K\right)}. 
\end{aligned}
\end{equation}

Next, we deduce to find a local bound. Let $W$ be a chosen open subset such that $K\subset W \Subset B_{k^{\varepsilon}}\left(p\right)$ for each $k\geq M_{K}$ and let $\chi, \tilde{\chi}\in \mathcal{C}^{\infty}_{c}\left(W\right)$ be two cut-off functions such that $\chi=1$ in $K$ and $\tilde{\chi}=1$ in $\supp \chi$. By Theorem \ref{Thm: Sobolev embedding thm}, there exists $C_{1}>0$ independent of $k$ and $y_{0}$, such that
\begin{equation}\label{Ineq: Sobolev embedding applied to heat kernel}
\begin{aligned}
\left\| A_{\left(k\right),p}\left(t_{0}\right)\omega_{y_{0},\alpha,J',\delta} \right\|_{\mathcal{C}^{l}\left(K\right)}
\leq C_{1} \left\| \tilde{\chi} A_{\left(k\right),p}^{r}\left(t_{0}\right) \chi\omega_{y_{0},\alpha,J',\delta} \right\|_{2m},
\end{aligned}
\end{equation}
for some  $m\in \mathbb{N}$. Moreover, by Theorem \ref{Critical Thm: mapping property of s.h.o.}, we obtain
\begin{equation}\label{Ineq: Mapping prop applied to heat kernel}
\begin{aligned}
\left\| \tilde{\chi} A_{\left(k\right),p}^{r}\left(t_{0}\right) \chi\omega_{y_{0},\alpha,J',\delta} \right\|_{2m} 
\leq C_{2}\left(\chi,\tilde{\chi},t\right) \left\| \omega_{y_{0},\alpha,J,\delta}\right\|_{-2m} \leq  C_{3}\left(\chi,\tilde{\chi},t_{0}\right),
\end{aligned}
\end{equation}
where $C_{2},C_{3}$ depend on $\chi, \tilde{\chi}, t_{0}$, but on neither $k$ nor $y_{0}$. Hence, by \eqref{Eq: Limit of C^l norm}, \eqref{Ineq: Sobolev embedding applied to heat kernel} and \eqref{Ineq: Mapping prop applied to heat kernel}, we conclude
\begin{equation}\label{Ineq: Locally bound for x,y deri}
\begin{aligned}
\left\| A_{\left(k\right),p}^{r}\phantom{}_{I',J'}\left(t_{0},x,y\right) \right\|_{\mathcal{C}^{l}\left(K\times K\right)}
\leq \left\| \sideset{}{'}\sum_{I} A_{\left(k\right),p}^{r}\phantom{}_{I,J'}\left(t_{0},x,y\right) dx^{I}\right\|_{\mathcal{C}^{l}\left(K\times K\right)}
\leq C_{3}\left(\chi,\tilde{\chi},t_{0}\right)
\end{aligned}
\end{equation}
for any two $I',J'$. 

Finally, to establish Theorem \ref{Main Thm 1: locally uniform boundedness of s.h.}, it remains to deal with $t$-derivatives. By nature of scaled heat kernels, we see
\begin{equation*}
\begin{aligned}
\partial_{t}^{\beta} A_{\left(k\right),p}^{r}\left(t\right)\omega
=-\left( \Delta_{f,p}^{\left(r\right)}\right)^{\beta} A_{\left(k\right),p}^{r}\left(t\right)\omega
\end{aligned}
\end{equation*}
for each $\omega\in \Omega^{r}_{c}\left(W\right)$ and the constant $C_{3}$ depends smoothly on $t_{0}\in T$ (by Theorem \ref{Critical Thm: mapping property of s.h.o.}), so by \eqref{Ineq: Locally bound for x,y deri}, we conclude that for each $l\in \mathbb{N}\cup \left\{0\right\}$ and for each $I',J'$
\begin{equation*}
\begin{aligned}
\left\| A_{\left(k\right),p}^{r}\phantom{}_{I',J'}\left(t,x,y\right) \right\|_{\mathcal{C}^{l}\left(T\times K\times K\right)} \leq C\left(T, K\right),
\end{aligned}
\end{equation*}
where $C$ depends on $T$ and $K$. Therefore, Theorem \ref{Main Thm 1: locally uniform boundedness of s.h.} has been shown. 
\end{proof}

\subsection{Proof of Theorem 1.1}\label{s.h.k.a.}

In this subsection, we prove Theorem \ref{Main Thm: Semi-classical Heat Kernel Asymptotics}.

Let $\left\{T_{i}\right\}_{i=1}^{\infty}\subset \mathbb{R}^{+}$ be a collection of compact subsets satisfying $T_{i}\subset T_{i+1}^{\circ}$ and 
$\bigcup_{i=1}^{\infty} T_{i}=\mathbb{R}^{+}$
and let $\left\{K_{i}\right\}_{i=1}^{\infty}\subset \mathbb{R}^{n}$ be a collection of compact subsets satisfying $K_{i}\subset K_{i+1}^{\circ}$ and
$\bigcup_{i=1}^{\infty} K_{i}=\mathbb{R}^{n}.$

Owing to Theorem \ref{Main Thm 1: locally uniform boundedness of s.h.} and the Arzela-Ascoli theorem, there exists a strictly increasing sequence $\left\{n_{1,k}\right\}_{k}\subset \mathbb{N}$ such that the subsequence $\left\{ A_{\left(n_{1,k}\right),p}^{r}\left(t,x,y\right)\right\}_{k}$ that converges in $\mathcal{C}^{1}$-norm in $T_{1}\times K_{1}\times K_{1}$. Again, according to Theorem \ref{Main Thm 1: locally uniform boundedness of s.h.} and the Arzela-Ascoli theorem, there exists a strictly increasing sequence $\left\{n_{2,k}\right\}_{k}\subset \left\{n_{1,k}\right\}_{k}$ such that the subsequence $\left\{ A_{\left(n_{2,k}\right),p}^{r}\left(t,x,y\right)\right\}_{k}$ that converges in $\mathcal{C}^{2}$-norm in $T_{2}\times K_{2}\times K_{2}$. We proceed in the same manner to obtain a subsequence $\left\{ A_{\left(n_{i,k}\right),p}^{r}\left(t,x,y\right)\right\}_{k}$ for each $i$ that converges in $\mathcal{C}^{i}$-norm in $T_{i}\times K_{i}\times K_{i}$. Finally, the diagonal argument enables us to find a subsequence $\left\{ A_{\left(n_{k}\right),p}^{r}\left(t,x,y\right)\right\}_{n_{k}}$ such that
\begin{equation*}
\begin{aligned}
\lim_{k\to \infty} A_{\left(n_{k}\right),p}^{r}\left(t,x,y\right) = B^{r}_{p}\left(t,x,y\right)
\end{aligned}
\end{equation*}
in $\mathcal{C}^{\infty}$-topology in each compact subset of $\mathbb{R}^{+}\times \mathbb{R}^{n}\times \mathbb{R}^{n}$. 
Note that
\begin{equation*}
\begin{aligned}
B^{r}_{p} \left(t,x,y\right) 
\in \mathcal{C}^{\infty}\left( \mathbb{R}^{+}\times \mathbb{R}^{n}\times \mathbb{R}^{n}; \bigwedge^{r}\left(T^{*}\mathbb{R}^{n}\right) \boxtimes \left(\bigwedge^{r}\left(T^{*}\mathbb{R}^{n}\right) \right)^{*}\right).
\end{aligned}
\end{equation*}
In addition, for each $t>0$, define the operator $B^{r}_{p}\left(t\right):\Omega^{r}_{c}\left(\mathbb{R}^{n}\right)\to \Omega^{r}\left( \mathbb{R}^{n}\right)$ by
\begin{equation*}
\begin{aligned}
\left( B^{r}_{p}\left(t\right)\omega\right) \left(x\right) 
= \int_{\mathbb{R}^{n}} B^{r}_{p}\left(t,x,y\right)\omega\left(y\right) \ d y.
\end{aligned}
\end{equation*}
Note that $B^{r}_{p}\left(t\right)\omega\in \Omega^{r}\left(\mathbb{R}^{+}\times \mathbb{R}^{n}\right)$. 

Now, to see $B_{p}^{r}\left(t,x,y\right)=e^{-t\Delta_{f,p}^{\left(r\right)}}\left(x,y\right)$, we need to show that for each $t>0$, 
\begin{equation}\label{Mapping: range B(t) in domain}
\begin{aligned}
B_{p}^{r}\left(t\right):\Omega^{r}_{c}\left(\mathbb{R}^{n}\right)\to \dom \Delta_{f,p}^{\left(r\right)} := \left\{ \omega\in L^{2}_{r}\left(\mathbb{R}^{n}\right): \Delta_{f,p}^{\left(r\right)}\omega \in L^{2}_{r}\left(\mathbb{R}^{n}\right)\right\}
\end{aligned}
\end{equation}
and that
\begin{equation}\label{Eq: B^r (t) satisfies h.e}
\begin{aligned}
\left\{ \begin{array}{l}
\frac{\partial}{\partial t} B^{r}_{p}\left(t\right)\omega+ \Delta_{f,p}^{\left(r\right)} B^{r}_{p}\left(t\right)\omega = 0 \\
\lim_{t\to 0^{+}}\left\| B_{p}^{r}\left(t\right)\omega - \omega\right\|_{L^{2}\left(\mathbb{R}^{n}\right)} = 0
\end{array}\right.  
\end{aligned}
\end{equation}
for each $\omega\in \Omega^{r}_{c}\left(\mathbb{R}^{n}\right)$.

To see \eqref{Mapping: range B(t) in domain}, it suffices to show for each $m\in \mathbb{N}\cup\left\{0\right\}$, 
\begin{equation*}
\begin{aligned}
\left\| \left( \Delta_{f,p}^{\left(r\right)} \right)^{m} B_{p}^{r}\left(t\right) \omega\right\|_{L^{2}\left(\mathbb{R}^{n}\right)} < \infty
\end{aligned}
\end{equation*}
for each $\omega\in \Omega^{r}_{c}\left(\mathbb{R}^{n}\right)$.
Observe that, by Fatou's lemma, we obtain
\begin{equation*}
\begin{aligned}
\left\|  \left( \Delta_{f,p}^{\left(r\right)} \right)^{m} B_{p}^{r}\left(t\right) \omega\right\|_{L^{2}\left(\mathbb{R}^{n}\right)} 
\leq \liminf_{R\to \infty} \left\| \left( \Delta_{f,p}^{\left(r\right)} \right)^{m} B_{p}^{r}\left(t\right) \omega\right\|_{L^{2}\left(\overline{B_{R}\left(0\right)}\right)}.
\end{aligned}
\end{equation*}
Now, choose two cut-off function $\chi,\tilde{\chi}\in \mathcal{C}^{\infty}_{c}\left(B_{1}\left(0\right)\right)$ such that $\tilde{\chi}=1$ in $\supp \chi$,  and for each $k$, put
\begin{equation*}
\begin{aligned}
\chi_{k}\left(x\right):=\chi\left(\frac{x}{k^{\varepsilon}}\right)\in \mathcal{C}^{\infty}_{c}\left(B_{k^{\varepsilon}}\left(0\right)\right), \tilde{\chi}_{k} := \tilde{\chi}\left(\frac{x}{k^{\varepsilon}}\right) \in \mathcal{C}^{\infty}_{c}\left(B_{k^{\varepsilon}}\left(0\right)\right).
\end{aligned}
\end{equation*}
Then by Theorem \ref{Spectral Theorem}, we derive
\begin{equation*}
\begin{aligned}
 \left\| \left( \Delta_{f,p}^{\left(r\right)} \right)^{m} B_{p}^{r}\left(t\right) \omega\right\|_{L^{2}\left(\overline{B_{R}\left(0\right)}\right)}
 \leq \lim_{k\to \infty} \left\| \tilde{\chi}_{n_{k}} \left( \Delta_{f,p}^{\left(r\right)} \right)^{m} A_{\left(n_{k}\right),p}^{r}\left(t\right) \chi_{n_{k}}\omega\right\|_{L^{2}\left(\mathbb{R}^{n}\right)}
 \leq C\left(t\right) \left\| \omega \right\|_{L^{2}\left(\mathbb{R}^{n}\right)},
\end{aligned}
\end{equation*}
where $C\left(t\right)$ is a constant that depends smoothly on $t$ but is independent of $R, k$. Hence, we conclude
\begin{equation*}
\begin{aligned}
\left\|  \left( \Delta_{f,p}^{\left(r\right)} \right)^{m} B_{p}^{r}\left(t\right) \omega\right\|_{L^{2}\left(\mathbb{R}^{n}\right)} 
\leq C\left(t\right)\left\| \omega\right\|_{L^{2}\left(\mathbb{R}^{n}\right)} <\infty
\end{aligned}
\end{equation*}
and \eqref{Mapping: range B(t) in domain} is now established.

To see \eqref{Eq: B^r (t) satisfies h.e}, let $\omega\in \Omega^{r}_{c}\left(\mathbb{R}^{n}\right)$. For each $\omega\in \Omega^{r}_{c}\left(\mathbb{R}^{n}\right)$, by Theorem \ref{Main Thm 1: locally uniform boundedness of s.h.}, we see
\begin{equation*}
\begin{aligned}
\left( \frac{\partial}{\partial t}B^{r}_{p}\left(t\right)\right)\omega
&=\int_{\mathbb{R}^{n}} \frac{\partial}{\partial t} B_{p}^{r}\left(t,x,y\right)\omega\left(y\right) dy
=\lim_{k\to \infty} \int_{\mathbb{R}^{n}} \frac{\partial}{\partial t} A_{\left(n_{k}\right),p}^{r}\left(t,x,y\right)\omega\left(y\right) dy\\
&=-\lim_{k\to\infty} \int_{\mathbb{R}^{n}} \Delta_{f,p}^{\left(r\right)} A_{\left(n_{k}\right),p}^{r}\left(t,x,y\right) \omega \left(y\right) dy\\
&=-\int_{\mathbb{R}^{n}}\Delta_{f,p}^{\left(r\right)}B_{p}^{r}\left(t,x,y\right)\omega\left(y\right) dy
=-\Delta_{f,p}^{\left(r\right)}B_{p}^{r}\left(t\right)\omega
\end{aligned}
\end{equation*}
for each $\left(t,x\right)\in K$, where $K$ is a compact subset in each compact subset of $\mathbb{R}^{+}\times \mathbb{R}^{n}$. This shows $\frac{\partial}{\partial t} B^{r}_{p}\left(t\right)\omega+ \Delta_{f,p}^{\left(r\right)} B^{r}_{p}\left(t\right)\omega = 0$.

To see $\lim_{t\to 0^{+}}\left\| B_{p}^{r}\left(t\right)\omega-\omega\right\|_{L^{2}\left(\mathbb{R}^{n}\right)}=0$, by Fatou's lemma, we obtain for each $t>0$, 
\begin{equation*}
\begin{aligned}
\left\| B_{p}^{r}\left(t\right)\omega-\omega\right\|_{L^{2}\left(\mathbb{R}^{n}\right)}
\leq \liminf_{R\to \infty} \left\| B_{p}^{r}\left(t\right)\omega-\omega\right\|_{L^{2}\left(\overline{B_{R}\left(0\right)}\right)}.
\end{aligned}
\end{equation*}
Again, let $\chi_{k}, \tilde{\chi}_{k}\in \mathcal{C}^{\infty}_{c}\left(B_{k^{\varepsilon}}\left(0\right)\right)$ as previously given. By Theorem \ref{Spectral Theorem} and the mean value theorem, we derive
\begin{equation*}
\begin{aligned}
\left\| B_{p}^{r}\left(t\right)\omega-\omega\right\|_{L^{2}\left(\overline{B_{R}\left(0\right)}\right)}^{2} 
&\leq \lim_{k\to \infty} \left\| \tilde{\chi}_{n_{k}}\left( A_{\left(n_{k}\right),p}^{r}\left(t\right) \chi_{n_{k}}\omega - \chi_{n_{k}\omega} \right) \right\|_{L^{2}\left(\mathbb{R}^{n}\right)}^{2}\\
&\leq \lim_{k\to \infty} \left(n_{k}\right)^{\frac{n}{2}} \left\|  e^{-\frac{t}{n_{k}}\Delta_{n_{k}}^{\left(r\right)}}\left(\chi_{n_{k}}\omega\right)_{\left[n_{k}\right]}- \left(\chi_{n_{k}}\omega\right)_{\left[n_{k}\right]} \right\|_{L^{2}\left(M\right)}^{2}\\
&=\lim_{k\to \infty} \left(n_{k}\right)^{\frac{n}{2}} \int_{\mathcal{S}\times \mathbb{N}} \left(e^{-\frac{t}{n_{k}}s}-1\right)^{2} \left|g\right|^{2} d\mu\\
&=\lim_{k\to \infty} \left(n_{k}\right)^{\frac{n}{2}} \left(\frac{t}{n_{k}}\right)^{2} \int_{\mathcal{S}\times \mathbb{N}} e^{-\frac{t_{k}}{n_{k}}s} \left|sg\right|^{2} d\mu \\
&\leq \lim_{k\to \infty} \left(n_{k}\right)^{\frac{n}{2}} t^{2} \left\| \frac{1}{n_{k}}\Delta_{n_{k}}^{\left(r\right)} \left( \chi_{n_{k}}\omega\right)_{\left[n_{k}\right]} \right\|^{2}_{L^{2}\left(M\right)}\\
&=t^{2} \left\| \Delta_{f,p}^{\left(r\right)} \omega \right\|_{L^{2}\left(\mathbb{R}^{n}\right)}^{2},
\end{aligned}
\end{equation*}
where $t_{k}\in \left(0,t\right)$ and $g\in L^{2}\left(S\times \mathbb{N}\right)$ is identified with $\left(\chi_{n_{k}}\omega\right)_{\left[n_{k}\right]}$ according to Theorem \ref{Spectral Theorem}.

This implies for each $\omega\in \Omega^{r}_{c}\left(\mathbb{R}^{n}\right)$,
\begin{equation*}
\begin{aligned}
\left\| B_{p}^{r}\left(t\right)\omega-\omega\right\|_{L^{2}\left(\mathbb{R}^{n}\right)} \leq t \left\| \Delta_{f,p}^{\left(r\right)}\omega\right\|_{L^{2}\left(\mathbb{R}^{n}\right)} \to 0
\end{aligned}
\end{equation*}
as $t\to 0^{+}$.

Finally, to see
\begin{equation*}
\begin{aligned}
B_{p}^{r}\left(t,x,y\right) = e^{-t\Delta_{f,p}^{\left(r\right)}}\left(x,y\right)
\end{aligned}
\end{equation*}
in $\mathbb{R}^{+}\times \mathbb{R}^{n}\times \mathbb{R}^{n}$,
it suffices to show
\begin{equation*}
\begin{aligned}
B_{p}^{r}\left(t\right)=e^{-t\Delta_{f,p}^{\left(r\right)}}
\end{aligned}
\end{equation*} 
in $\Omega^{r}_{c}\left(\mathbb{R}^{n}\right)$. To see it, we can first observe that
\begin{equation*}
\begin{aligned}
\lim_{h\to 0}\left\| \left( \frac{B^{r}\left(t+h\right)-B^{r}\left(t\right)}{h} - \frac{\partial}{\partial t}B^{r}\left(t\right)\right)\omega\right\|_{L^{2}\left(\mathbb{R}^{n}\right)} =0
\end{aligned}
\end{equation*}
from the following estimate obtained by Theorem \ref{Spectral Theorem}:
\begin{equation*}
\begin{aligned}
\left\| \left( \frac{B^{r}_{p}\left(t+h\right)-B^{r}_{p}\left(t\right)}{h} - \frac{\partial}{\partial t}B^{r}\left(t\right)\right)\omega\right\|_{L^{2}\left(\mathbb{R}^{n}\right)}  \leq h \left\|\left( \Delta_{f,p}^{\left(r\right)}\right)^{2}\omega \right\|_{L^{2}\left(\mathbb{R}^{n}\right)}
\end{aligned}
\end{equation*}
for each $\omega\in \Omega^{r}_{c}\left(\mathbb{R}^{n}\right)$. This shows
\begin{equation*}
\begin{aligned}
\frac{d}{d t} \left(B^{r}\left(t\right) \omega \bigg| \eta\right) = \left( \frac{\partial}{\partial t}B^{r}\left(t\right)\omega \bigg| \eta\right)
\end{aligned}
\end{equation*}
for each $\omega \in \Omega^{r}_{c}\left(\mathbb{R}^{n}\right)$ and for each $\eta\in L^{2}_{r}\left(\mathbb{R}^{n}\right)$. Hence, by the fundamental theorem of Calculus and the fact that $B^{r}\left(t\right)\omega \in \dom \Delta_{f,p}^{\left(r\right)}$ for each $\omega\in \Omega^{r}_{c}\left(\mathbb{R}^{n}\right)$, we derive
\begin{equation*}
\begin{aligned}
&\left( B^{r}_{p}\left(t\right)\omega \bigg|\eta\right) - \left( \omega \bigg| e^{-t\Delta_{f,p}^{\left(r\right)}}\eta\right)\\
&=\lim_{q\to 0^{+}} \int_{q}^{t} \frac{d}{ds} 
\left(B_{p}^{r}\left(s\right)\omega\bigg| e^{-\left(t+q-s\right)\Delta_{f,p}^{\left(r\right)}}\eta\right)  ds \\
&=\lim_{q\to 0^{+}} \int_{q}^{t} \left( \left( \frac{\partial}{\partial 
s}B_{p}^{r}\left(s\right)\right)\omega \bigg| e^{-\left(t+q-s\right)\Delta_{f,p}^{\left(r\right)}}\eta\right) -\left( B_{p}^{r}\left(s\right)\omega \bigg| \left(\frac{\partial}{\partial s}e^{-\left(t+q-s\right)\Delta_{f,p}^{\left(r\right)}}\right)\eta \right) ds\\
&=\lim_{q\to 0^{+}} \int_{q}^{t} \left( -\Delta_{f,p}^{\left(r\right)}B_{p}^{r}\left(s\right) \omega \bigg| e^{-\left(t+q-s\right)\Delta_{f,p}^{\left(r\right)}}\eta\right) +\left( B_{p}^{r}\left(s\right)\omega \bigg| \Delta_{f,p}^{\left(r\right)}e^{-\left(t+q-s\right)\Delta_{f,p}^{\left(r\right)}}\eta \right) ds\\
&=\lim_{q\to 0^{+}} \int_{q}^{t} \left( -\Delta_{f,p}^{\left(r\right)}B_{p}^{r}\left(s\right) \omega \bigg| e^{-\left(t+q-s\right)\Delta_{f,p}^{\left(r\right)}}\eta\right) +\left(\Delta_{f,p}^{\left(r\right)}B_{p}^{r}\left(s\right)\omega \bigg| e^{-\left(t+q-s\right)\Delta_{f,p}^{\left(r\right)}}\eta \right) ds\\
&=0
\end{aligned}
\end{equation*}
for any two $\omega,\eta\in \Omega^{r}_{c}\left(\mathbb{R}^{n}\right)$.
This shows
\begin{equation*}
\begin{aligned}
\left(B_{p}^{r}\left(t\right)\omega \bigg| \eta\right) = \left( \omega \bigg| e^{-t\Delta_{f,p}^{\left(r\right)}}\eta\right) = \left( e^{-t\Delta_{f,p}^{\left(r\right)}}\omega \bigg| \eta\right)
\end{aligned}
\end{equation*}
for any two $\omega,\eta\in \Omega^{r}_{c}\left(\mathbb{R}^{n}\right)$, implying $B_{p}^{r}\left(t\right)=e^{-t\Delta_{f,p}^{\left(r\right)}}$ in $\Omega^{r}_{c}\left(\mathbb{R}^{n}\right)$ and thus $B_{p}^{r}\left(t,x,y\right) = e^{-t\Delta_{f,p}^{\left(r\right)}}\left(x,y\right)$ in $\mathbb{R}^{+}\times \mathbb{R}^{n}\times \mathbb{R}^{n}$.  

Note that the previous argument implies that every convergent subseqeunce of the sequence of the scaled heat kernels $\left\{A_{\left(k\right),p}^{\left(r\right)}\left(t,x,y\right)\right\}_{k}$ converges to the same limit, so together with Theorem \ref{Main Thm 1: locally uniform boundedness of s.h.}, we can further conclude this sequence in effect converges to that limit. Therefore, we have established Theorem \ref{Main Thm: Semi-classical Heat Kernel Asymptotics}.

\section{Heat Kernel Asymptotics away from Critical Points}\label{H.K.A.O.C.P.}

In this section, we give proofs of Theorem \ref{Main Thm: Asymptotic Behavior of Scaled Heat Kernel in between} and Theorem \ref{Main Thm: Asymptotic Behavior of Heat Kernel outside Critical Points}. 

The key point of obtaining the two theorems is based on the remark that we can retrieve pointwise bound for the heat kernels in question by establishing their corresponding mapping properties (see Theorem \ref{Thm: Mapping property outside criti pts 2} and Theorem \ref{Thm: Mapping propety outside criti pts 1}). These mapping properties are established based on a Bochner-type estimate related to differential forms with support away from the critical points (see Lemma \ref{Lem: Bochner type estimate (Pointwise)} and  Lemma \ref{Lem: Bochner-type Estimate}).

Throughout this section,  for each $p\in \crit \left(f\right)$, we additionally identify the coordinate neighborhood $U_{p}$ of $p$ with the Euclidean ball $B_{\frac{3}{2}}\left(0\right)$ and $p$ with $0$, under the coordinate chart $\varphi_{p}$. For each large $k>0$, put
\begin{equation*}
\begin{aligned}
\mathcal{U}^{k} = \bigcup_{p\in \crit\left(f\right)} U_{p}^{k},
\end{aligned}
\end{equation*}
where $U_{p}^{k}$ is identified with the Euclidean ball $B_{k^{-\frac{1}{2}+\varepsilon}}\left(0\right)$, $\varepsilon\in \left(0,\frac{1}{2}\right)$ under the coordinate chart $\varphi_{p}$, as in Subsection \ref{s.t.}.

\subsection{Proof of Theorem 1.2}

Choose $D>1$ and let $k$ large enough so that  $2D<k^{\varepsilon}$.  For each $p\in \crit \left(f\right)$, let $x\in B_{k^{\varepsilon}}\left(0\right) \setminus B_{2D}\left(0\right)$ and put
\begin{equation*}
\begin{aligned}
A_{x}^{k} := \varphi_{p}^{-1}\left( B_{k^{-\frac{1}{2}+\varepsilon}}\left(0\right) \setminus B_{\frac{\left|x\right|}{2}k^{-\frac{1}{2}}}\left(0\right)\right)\subset U_{p}^{k}
\end{aligned}
\end{equation*}

The choice of $D$ plays a role in the following Bochner type estimate:

\begin{lem}\label{Lem: Bochner type estimate (Pointwise)}
If $D$ large enough, let $x\in B_{k^{\varepsilon}}\left(0\right)\setminus B_{2D}\left(0\right)$ and we have
\begin{equation}\label{Ineq: Adapted Bochner-type formula}
\begin{aligned}
\left( \Delta_{k}^{\left(r\right)}\omega | \omega\right) \geq Ck\left|x\right|^{2}\left\| \omega\right\|^{2}
\end{aligned}
\end{equation}
for each $\omega\in \Omega^{r}\left(M\right)$ with $\supp \omega \subset A_{x}^{k}$ and  for large $k$, where $C$ is independent of $D,x,k$.
\end{lem}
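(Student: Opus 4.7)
The plan is to work entirely in the local coordinate chart $\varphi_p$ around the critical point $p$, in which the metric is flat and the Morse function takes its standard quadratic form, so that the local expression \eqref{Eq: Local expression of Witten Laplacian under flat metric} is available. For $k$ large enough we have $k^{-\frac12+\varepsilon}<\tfrac32$, so $A_x^k$ sits inside $U_p$ and any $\omega$ supported in $A_x^k$ may be regarded as an $r$-form on $\mathbb{R}^n$ with compact support in the Euclidean annulus $B_{k^{-1/2+\varepsilon}}(0)\setminus B_{|x|/(2\sqrt k)}(0)$.

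Writing $\omega=\sideset{}{'}\sum_I \omega_I\,dx^I$ with $\omega_I\in\mathcal{C}^\infty_c(A_x^k)$, the local formula \eqref{Eq: Local expression of Witten Laplacian under flat metric} decouples on the components: each $\omega_I$ is hit by the scalar operator $-\sum_i\partial_i^2+k^2|y|^2+k c_I$, where $c_I=\sum_i\varepsilon_i\varepsilon_i^I$ satisfies $|c_I|\leq n$. Integration by parts (boundary terms vanish by compact support) gives
\begin{equation*}
\begin{aligned}
(\Delta_k^{(r)}\omega\mid\omega)
=\sideset{}{'}\sum_I\int_{A_x^k}\!\Bigl(\sum_{i=1}^n|\partial_i\omega_I|^2+k^2|y|^2|\omega_I|^2+kc_I|\omega_I|^2\Bigr)\,dy.
\end{aligned}
\end{equation*}
The gradient term is non-negative and is discarded. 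On $A_x^k$ one has $|y|\geq |x|/(2\sqrt k)$, so $k^2|y|^2\geq k|x|^2/4$ pointwise; the zero-order term is controlled by $|kc_I|\leq kn$ uniformly in $k$.

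Combining these bounds yields
\begin{equation*}
\begin{aligned}
(\Delta_k^{(r)}\omega\mid\omega)\ \geq\ k\Bigl(\tfrac{|x|^2}{4}-n\Bigr)\|\omega\|^2.
\end{aligned}
\end{equation*}
The remaining task is to absorb the additive constant $-n$ into a constant multiple of $|x|^2$. Choose $D$ with $D^2\geq 2n$; then whenever $|x|>2D$ we have $|x|^2\geq 8n$, so $|x|^2/4-n\geq|x|^2/8$, and the inequality \eqref{Ineq: Adapted Bochner-type formula} follows with $C=1/8$. The main subtlety to check is precisely that this choice of $D$ is made once and for all independently of $x$ and $k$: the lower-order contribution from the Hessian is bounded by $n$ (the dimension) through the coefficients $c_I$, so the threshold $|x|\geq 2D$ forces the potential term $k^2|y|^2$ to dominate the zero-order perturbation uniformly on the annular support. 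The rest is bookkeeping, and the hypothesis $x\in B_{k^\varepsilon}(0)\setminus B_{2D}(0)$ together with $k$ sufficiently large is exactly what guarantees both that $A_x^k$ is a nonempty subset of $U_p$ and that the potential wins over the perturbation.
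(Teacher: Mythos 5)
Your proof is correct and follows essentially the same outline as the paper's: discard the (non-negative) Laplacian contribution, bound the potential from below on the annulus $A_x^k$ by $k|x|^2/4$, and absorb the zero-order term by taking $D$ large. The one point of divergence is in how the zero-order perturbation is handled. The paper's proof works with the global decomposition $\Delta_k^{(r)} = \Delta^{(r)} + k^2|df|^2 + kA$, where $A = \mathcal{L}_{\nabla f} + \mathcal{L}_{\nabla f}^*$, and invokes an abstract lower bound $(A\eta\mid\eta)\geq m\|\eta\|^2$ obtained from compactness and a partition of unity; the constant $m$ and hence the threshold $D$ are left implicit. You instead exploit that $\supp\omega$ lies inside the flat coordinate chart, so the fully decoupled local expression \eqref{Eq: Local expression of Witten Laplacian under flat metric} applies and the zero-order coefficient is literally $c_I = \sum_i\varepsilon_i\varepsilon_i^I$ with the elementary bound $|c_I|\leq n$. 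This buys you an explicit constant ($D^2\geq 2n$, $C=1/8$) and avoids the partition-of-unity detour, at no loss of generality since the lemma only concerns forms supported in $A_x^k$. Both versions are sound; yours is a bit more self-contained and quantitative.
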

\begin{proof}
Recall that we can write the Witten Laplacian $\Delta_{k}^{\left(r\right)}$ as 
\begin{equation*}
\begin{aligned}
\Delta_{k}^{\left(r\right)} 
= \Delta^{\left(r\right)} + k^{2}\left|df\right|^{2}+ k\left(\mathcal{L}_{\nabla f} + \mathcal{L}_{\nabla f}^{*}\right).
\end{aligned}
\end{equation*}
Put $A=\mathcal{L}_{\nabla f} + \mathcal{L}_{\nabla f}^{*}$.

Note that 
\begin{equation*}
\begin{aligned}
\left|df\right|^{2}\geq \frac{1}{4k} \left|x\right|^{2} 
\end{aligned}
\end{equation*}
in $A_{x}^{k}$ (with respect to $\left(U_{p},\varphi_{p}\right)$). Moreover, by the local expression \eqref{Eq: Local expression of Witten Laplacian} and partition of unity, there exists $m\in \mathbb{R}$ such that $\left(A\eta|\eta\right)\geq m\left\|\eta\right\|^{2}$ for each $\eta\in \Omega^{r}\left(M\right)$. For this $m$, let $D$ large enough such that 
\begin{equation*}
\begin{aligned}
\left| \frac{m}{D^{2}} \right| <\frac{1}{8}.
\end{aligned}
\end{equation*}

Now, for each $\omega\in \Omega^{r}\left(M\right)$ with $\supp \omega \subset A_{x}^{k}$, we deduce
\begin{equation*}
\begin{aligned}
\left(\Delta_{k}^{\left(r\right)} \omega |\omega\right) \geq k\left|x\right|^{2}\left(\frac{1}{4}+\frac{m}{\left|x\right|^{2}}\right) \left\|\omega\right\|^{2}
\geq Ck\left|x\right|^{2} \left\|\omega\right\|^{2}
\end{aligned}
\end{equation*}
with $C=\frac{1}{4}$. 
\end{proof}

Let $D>1$ be large enough such that Lemma \ref{Lem: Bochner type estimate (Pointwise)} holds and let $k>0$ large enough such that $2D<k^{\varepsilon}$. 
Given $p\in \crit\left(f\right)$, for each $x\in B_{k^{\varepsilon}}\left(0\right)\setminus B_{2D}\left(0\right)$, let $\chi^{\left[1\right]}\in \mathcal{C}^{\infty}_{c}\left(U_{p}\right)$ be a cut-off function such that $\chi^{\left[1\right]}=1$ in $\varphi_{p}^{-1}\left( B_{\frac{1}{2}}\left(0\right)\right)$  and  $\chi^{\left[1\right]}=0$ in $M\setminus \varphi_{p}^{-1} \left( B_{\frac{3}{4}}\left(0\right)\right)$. Put $\chi^{\left[1\right]}_{x,k}\in \mathcal{C}^{\infty}_{c}\left(U_{p}^{k}\right)$ such that
\begin{equation*}
\begin{aligned}
\chi^{\left[1\right]}_{x,k} \circ \varphi_{p}^{-1}\left(q\right) = \chi^{\left[1\right]}\left(\left|x\right|^{-1}k^{\frac{1}{2}} q \right)
\end{aligned}
\end{equation*}
for each $q\in \varphi_{p}\left(U_{p}\right)\subset \mathbb{R}^{n}$. Similarly, let $\chi^{\left[2\right]}\in \mathcal{C}^{\infty}_{c}\left(U_{p}\right)$ be a cut-off function such that $\chi^{\left[2\right]}=1$ in $\varphi_{p}^{-1}\left(B_{\frac{5}{4}}\left(0\right)\right)$ and $\chi^{\left[2\right]}=0$ in $\varphi_{p}^{-1}\left( B_{\frac{3}{2}}\left(0\right)\right)$. Put $\chi^{\left[2\right]}_{x,k}\in \mathcal{C}_{c}^{\infty}\left(U_{p}^{k}\right)$ such that
\begin{equation*}
\begin{aligned}
\chi^{\left[2\right]}_{x,k} \circ \varphi_{p}^{-1} \left(q\right) = \chi^{\left[2\right]}\left( \left|x\right|^{-1}k^{\frac{1}{2}}  q \right)
\end{aligned}
\end{equation*}
for each  $q\in \varphi_{p}\left(U_{p}\right)\subset \mathbb{R}^{n}$. Finally, set $\chi_{x,k}\in \mathcal{C}^{\infty}_{c}\left(A_{x}^{k}\right)$ by
\begin{equation}\label{Eq: Defi of chi_x,k}
\begin{aligned}
\chi_{x,k} = \chi_{x,k}^{\left[2\right]} - \chi_{x,k}^{\left[1\right]}.
\end{aligned}
\end{equation}
In particular,  $\chi_{x,k}=1$ in $\varphi_{p}^{-1}\left(B_{\frac{5\left|x\right|}{4}k^{-\frac{1}{2}}}\left(0\right) \setminus B_{\frac{3\left|x\right|}{4}k^{-\frac{1}{2}}}\left(0\right)\right)$. Note that
\begin{equation}\label{Ineq: Supnorm of chi_x,k}
\begin{aligned}
\sup_{\mathbb{R}^{n}} \left| D^{\alpha}\chi_{x,k}^{\left[1\right]} \right| \leq C\left(\chi^{\left[1\right]},\chi^{\left[2\right]}\right) \left|x\right|^{-\left|\alpha\right|} k^{\frac{\left|\alpha\right|}{2}},
\end{aligned}
\end{equation}
where $C\left(\chi^{\left[1\right]},\chi^{\left[2\right]}\right)$ depends on $\chi^{\left[1\right]}$ and $\chi^{\left[2\right]}$, with respect to the coordinates given by $\left(U_{p}, \varphi_{p}\right)$. The construction of $\chi_{x,k}$ can be given from the same cut-off functions $\chi^{\left[1\right]}$ and $\chi^{\left[2\right]}$ as long as $x\in B_{k^{\varepsilon}}\left(0\right)\setminus B_{2D}\left(0\right)$ and $k>0$,  in which circumstance, the constant $C\left(\chi^{\left[1\right]},\chi^{\left[2\right]}\right)$ in \eqref{Ineq: Supnorm of chi_x,k} does not rely on $x$ and $k$.

With Lemma \ref{Lem: Bochner type estimate (Pointwise)} at hand, we can obtain the following $L^{2}$-estimate.

\begin{theo}\label{Thm: L^2 estimate bdd by |x|^-N w/o diff.}
Let $D>1$ be large enough such that Lemma \ref{Lem: Bochner type estimate (Pointwise)} holds, and for large $k>0$ and for each $x\in B_{k^{\varepsilon}}\left(0\right)\setminus B_{2D}\left(0\right)$, let $\chi_{x,k}$ be the cut-off function given by the cut-off function $\chi^{\left[1\right]},\chi^{\left[2\right]}$ as in \eqref{Eq: Defi of chi_x,k}. Then  for each $N\in \mathbb{N}$, 
\begin{equation}\label{Ineq: L^2 estimate bdd by |x|^-N w/o diff.}
\left\| \chi_{x,k}e^{-\frac{t}{k}\Delta_{k}^{\left(r\right)}}\omega\right\| \leq C\left(\chi^{\left[1\right]},\chi^{\left[2\right]}, t,N\right) \left|x\right|^{-N}\left\| \omega\right\|,
\end{equation}
for each $\omega\in \Omega^{r}\left(M\right)$, where $C\left(\chi^{\left[1\right]},\chi^{\left[2\right]}, t,N\right)$ depends on $\chi^{\left[1\right]},\chi^{\left[2\right]}$, $N$ and smoothly on $t$ but is independent of $x$ and $k$.
\end{theo}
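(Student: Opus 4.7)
The plan is to promote the coercivity $(\Delta_k^{(r)}\omega | \omega) \geq Ck|x|^2\|\omega\|^2$ for forms supported in $A_x^k$ (Lemma \ref{Lem: Bochner type estimate (Pointwise)}) into polynomial decay of $\|\chi_{x,k}\,e^{-\frac{t}{k}\Delta_k^{(r)}}\omega\|$ by a nested cutoff iteration. At each step one pays a commutator error that is absorbed by the next cutoff in the chain, and iterating enough times accumulates a factor comparable to $(k|x|^2)^{-N}$; since $k\geq 1$ and $|x|\geq 2D>1$, this is bounded by $|x|^{-N}$.

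I would first construct, analogously to \eqref{Eq: Defi of chi_x,k}, a family $\chi_{x,k}^{(0)} := \chi_{x,k}, \chi_{x,k}^{(1)}, \ldots, \chi_{x,k}^{(M)}$ of cutoffs (with $M$ to be fixed depending on $N$) built from slightly enlarged scalar cutoffs, so that $\chi_{x,k}^{(j+1)}\equiv 1$ on $\supp\chi_{x,k}^{(j)}$, all are compactly supported in $A_x^k$, and the uniform bound $\sup|D^\alpha\chi_{x,k}^{(j)}| \leq C|x|^{-|\alpha|}k^{|\alpha|/2}$ inherited from \eqref{Ineq: Supnorm of chi_x,k} holds in $j,x,k$. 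Setting $u(s) := e^{-\frac{s}{k}\Delta_k^{(r)}}\omega$ and $\phi_j(s) := \|\chi_{x,k}^{(j)} u(s)\|^2$, differentiating $\phi_j$ in $s$ via the heat equation, expanding with the commutator identity $\chi\Delta_k^{(r)} = \Delta_k^{(r)}\chi - [\Delta_k^{(r)},\chi]$, and applying Lemma \ref{Lem: Bochner type estimate (Pointwise)} to $\chi_{x,k}^{(j)} u$ yields
\begin{equation*}
\phi_j'(s) + 2C|x|^2\phi_j(s) \leq \tfrac{2}{k}\bigl|\bigl([\Delta_k^{(r)},\chi_{x,k}^{(j)}]\,u(s)\,\big|\,\chi_{x,k}^{(j)} u(s)\bigr)\bigr|.
\end{equation*}
Under the flat-metric local expression \eqref{Eq: Local expression of Witten Laplacian under flat metric}, the potential and the algebraic piece $k\sum\varepsilon_i\varepsilon_i^I$ of $\Delta_k^{(r)}$ commute with multiplication by $\chi_{x,k}^{(j)}$, so this commutator reduces to the purely first-order Laplacian commutator $[-\Delta,\chi_{x,k}^{(j)}]$ with coefficients of size $|x|^{-1}k^{1/2}$ and $|x|^{-2}k$.

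The critical step is closing the commutator error back onto $\phi_{j+1}$ and $\phi_{j+1}'$ so that iteration proceeds. Because $\nabla\chi_{x,k}^{(j)}$ and $\Delta\chi_{x,k}^{(j)}$ are supported where $\chi_{x,k}^{(j+1)}\equiv 1$, I can replace $u$ and $\nabla u$ in the error by $\chi_{x,k}^{(j+1)} u$ and $\nabla(\chi_{x,k}^{(j+1)} u)$. The gradient $\|\nabla(\chi_{x,k}^{(j+1)} u)\|^2$ is controlled by a Bochner--Weitzenb\"ock-type identity in terms of $(\Delta_k^{(r)}(\chi_{x,k}^{(j+1)} u)|\chi_{x,k}^{(j+1)} u) + C\|\chi_{x,k}^{(j+1)} u\|^2$, and the first factor is expanded once more via the commutator trick to express it through $\phi_{j+1}'$ and commutator error at the $\chi_{x,k}^{(j+2)}$ level. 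Combining these with Cauchy--Schwarz and Young's inequality, and absorbing the surplus $|x|^2\phi_j$-term into the dissipative $-2C|x|^2\phi_j$, yields a schematic closure
\begin{equation*}
\phi_j'(s) + C|x|^2\phi_j(s) \leq \tfrac{C}{|x|^2}\bigl(\phi_{j+1}(s) + |\phi_{j+1}'(s)|\bigr).
\end{equation*}

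Finally, I would multiply by the integrating factor $e^{C|x|^2 s}$ and integrate from $0$ to $t$, use $\phi_j(0)\leq\|\omega\|^2$ and $\int_0^t e^{-C|x|^2(t-s)}\,ds \leq C|x|^{-2}$, to obtain $\phi_j(t) \leq Ce^{-C|x|^2 t}\|\omega\|^2 + C|x|^{-4}\bigl(\sup_{s\leq t}\phi_{j+1}(s) + \int_0^t|\phi_{j+1}'(s)|\,ds\bigr)$. Iterating this inequality sufficiently many times (say $M=\lceil N/4\rceil$ steps) and terminating with the trivial bound $\phi_M(s) \leq \|\omega\|^2$ together with a spectral-theorem bound $\int_0^t|\phi_M'(s)|\,ds \leq C(t)\|\omega\|^2$ derived from Theorem \ref{Spectral Theorem}, produces $\phi_0(t) \leq C(\chi^{[1]},\chi^{[2]},t,N)|x|^{-N}\|\omega\|^2$. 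The main obstacle I anticipate is the careful bookkeeping needed to close the gradient term through Bochner--Weitzenb\"ock and to absorb the commutator errors by Young's inequality while keeping all constants uniform in $k$; the uniform derivative bound on $\chi_{x,k}^{(j)}$ and the $k|x|^2$ factor in the Bochner estimate are precisely what make this bookkeeping possible.
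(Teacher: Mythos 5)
Your plan is a genuinely different route from the paper's. The paper's proof is \emph{elliptic} in flavor: it works on the fixed time slice $\eta=\chi_{x,k}e^{-\frac{t}{k}\Delta_k^{(r)}}\omega$ and exploits the exact algebraic decomposition $\left(\Delta_k^{(r)}\eta\big|\eta\right)=\left\|d_k\eta\right\|^2+\left\|d_k^*\eta\right\|^2$ together with $d_k^2=(d_k^*)^2=0$. Because $d_k\eta$ and $d_k^*\eta$ are again compactly supported in $A_x^k$, Lemma \ref{Lem: Bochner type estimate (Pointwise)} can be applied to them directly, and iterating $N$ times gives
\begin{equation*}
\left\|\eta\right\|^2\leq C(k|x|^2)^{-N}\left(\left\|(d_kd_k^*)^{N/2}\eta\right\|^2+\left\|(d_k^*d_k)^{N/2}\eta\right\|^2\right).
\end{equation*}
The remaining terms are then expanded by the Leibniz rules \eqref{Eq: d_k chi_k eta}, \eqref{Eq: d_k^* chi_k eta} and closed by the spectral theorem, which supplies exactly the $k^N$ needed to cancel the $k^{-N}$. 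No time derivatives ever appear, and a single cutoff $\chi_{x,k}$ suffices.

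Your approach is \emph{parabolic}: you differentiate $\phi_j(s)=\|\chi^{(j)}_{x,k}e^{-\frac{s}{k}\Delta_k^{(r)}}\omega\|^2$ in time and run Gronwall with nested cutoffs. The derivation of $\phi_j'+2C|x|^2\phi_j\leq \tfrac{2}{k}|([\Delta_k^{(r)},\chi^{(j)}_{x,k}]u\,|\,\chi^{(j)}_{x,k}u)|$ is fine. But the proposed closure and iteration have a genuine gap around the quantity $\int_0^t|\phi_{j+1}'(s)|\,ds$. The derivative decomposes as $\phi_{j+1}'=-\tfrac{2}{k}(\Delta_k^{(r)}(\chi^{(j+1)}u)|\chi^{(j+1)}u)+\tfrac{2}{k}\mathrm{Re}\,([\Delta_k^{(r)},\chi^{(j+1)}]u\,|\,\chi^{(j+1)}u)$, and its \emph{negative} part is of size $\tfrac{2}{k}(\Delta_k^{(r)}(\chi^{(j+1)}u)|\chi^{(j+1)}u)$, which by Lemma \ref{Lem: Bochner type estimate (Pointwise)} is at least $C|x|^2\phi_{j+1}$ and can be much larger (it contains $\|d_k(\chi^{(j+1)}u)\|^2$, which has no uniform-in-$k$ upper bound in terms of $\phi$-quantities). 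Consequently $\tfrac{C}{|x|^2}|\phi_{j+1}'|$ can be of order $C\phi_{j+1}$ with no $|x|$-gain, the recursion formula you write for $\phi_j(t)$ does not improve by $|x|^{-4}$ per step as claimed, and there is no supplied mechanism to bound $\int_0^t|\phi_{j+1}'|\,ds$ at intermediate levels $j<M$ in terms of level-$(j+2)$ data. (Also, after integrating the differential inequality the $\int e^{-C|x|^2(t-s)}|\phi_{j+1}'(s)|\,ds$ term only earns a factor $1$, not $|x|^{-2}$, from the exponential weight, so the stated $|x|^{-4}$ prefactor on that term is not justified.) The fix is to keep the sign of $\phi_{j+1}'$ rather than take absolute values, and integrate $\int_0^t e^{-C|x|^2(t-s)}\phi_{j+1}'(s)\,ds$ by parts in $s$; this trades the derivative for $\phi_{j+1}$-values with the desired $|x|^{-2}$ gain per iteration. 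With that correction and careful handling of the recursive commutator errors, your parabolic route should go through, but it is substantially more delicate than the paper's elliptic iteration, which sidesteps all of the time-derivative bookkeeping via $\Delta_k^{(r)}=d_kd_k^*+d_k^*d_k$ and $d_k^2=0$.
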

\begin{proof}

First, we show \eqref{Ineq: L^2 estimate bdd by |x|^-N w/o diff.} holds for $N=1$. For higher positive integers $N$, we can achieve similarly.

Since $\chi_{x,k}e^{-\frac{t}{k}\Delta_{k}^{\left(r\right)}}\omega$ has its support in $A_{x}^{k}$, by Lemma \ref{Lem: Bochner type estimate (Pointwise)}, we obtain
\begin{equation}\label{Ineq: First split using Bochner}
\begin{aligned}
\left\| \chi_{x,k}e^{-\frac{t}{k}\Delta_{k}^{\left(r\right)}}\omega\right\|^{2}
&\leq C_{1}k^{-1}\left|x\right|^{-2} \left( \Delta_{k}^{\left(r\right)}\chi_{x,k}e^{-\frac{t}{k}\Delta_{k}^{\left(r\right)}}\omega \bigg| \chi_{x,k}e^{-\frac{t}{k}\Delta_{k}^{\left(r\right)}}\omega\right)\\
&=C_{1}k^{-1}\left|x\right|^{-2}\left( \left\| d_{k}\left(\chi_{x,k}e^{-\frac{t}{k}\Delta_{k}^{\left(r\right)}}\omega\right)\right\|^{2} + \left\| d_{k}^{*} \left(\chi_{x,k}e^{-\frac{t}{k}\Delta_{k}^{\left(r\right)}}\omega\right) \right\|^{2}\right).
\end{aligned}
\end{equation}
Hence, the $L^{2}$-norm estimate of $\chi_{x,k}e^{-\frac{t}{k}\Delta_{k}^{\left(r\right)}}\omega$ is determined by the two $L^{2}$-norms in the right hand side \eqref{Ineq: First split using Bochner}. Before we proceed, note that by direct computation, we see the following analogous Leibniz rules: 
\begin{equation}\label{Eq: d_k chi_k eta}
\begin{aligned}
d_{k}\left( \chi_{x,k} \eta \right)
= d\chi_{x,k} \wedge \eta + \chi_{x,k}d_{k}\eta
\end{aligned}
\end{equation}
and that
\begin{equation}\label{Eq: d_k^* chi_k eta}
\begin{aligned}
d_{k}^{*}\left(\chi_{x,k}\eta\right)
=\left(-1\right)^{n\left(r+1\right)+1} \ast\left( d\chi_{x,k}\wedge \ast \eta\right) + \chi_{x,k}d_{k}^{*}\eta
\end{aligned}
\end{equation}
for each $\eta\in \Omega^{r}\left(M\right)$. 

To deal with the first term on the right hand side of \eqref{Ineq: First split using Bochner}, by \eqref{Eq: d_k chi_k eta} with $\eta=e^{-\frac{t}{k}\Delta_{k}^{\left(r\right)}}\omega$, we see that
\begin{equation}\label{Ineq: First order d_k from first split}
\begin{aligned}
\left\| d_{k}\left( \chi_{x,k} e^{-\frac{t}{k}\Delta_{k}^{\left(r\right)}}\omega\right) \right\|^{2} 
\leq 4\left( \left\| d\chi_{x,k}\wedge e^{-\frac{t}{k}\Delta_{k}^{\left(r\right)}}\omega\right\|^{2}+ \left\| \chi_{x,k}d_{k}e^{-\frac{t}{k}\Delta_{k}^{\left(r\right)}}\omega\right\|^{2}\right).
\end{aligned}
\end{equation}
Note that $\chi_{x,k}\in \mathcal{C}^{\infty}_{c}\left(U_{p}^{k}\right)$ and has the local expression $d\chi_{x,k} = \sum_{i=1}^{n} \frac{\partial \chi_{x,k}}{\partial y^{i}} dy^{i}$
in $\left(U_{p}^{k},\varphi_{p}\right)$. Hence, by the local nature of $\chi_{x,k}$ \eqref{Ineq: Supnorm of chi_x,k} and Theorem \ref{Spectral Theorem}, we obtain
\begin{equation}\label{Ineq: First order term w/ chi_x,k diff}
\begin{aligned}
\left\| d\chi_{x,k} \wedge e^{-\frac{t}{k}\Delta_{k}^{\left(r\right)}}\omega \right\|^{2}
&\leq C_{2}\left(\chi^{\left[1\right]},\chi^{\left[2\right]}\right) k\left|x\right|^{-2}\left\| e^{-\frac{t}{k}\Delta_{k}^{\left(r\right)}}\omega\right\|^{2} \\
&\leq  C_{3}\left(\chi^{\left[1\right]},\chi^{\left[2\right]},t\right) k\left|x\right|^{-2}\left\| \omega\right\|^{2},
\end{aligned}
\end{equation}
where $C_{3}\left(\chi^{\left[1\right]},\chi^{\left[2\right]},t\right)$ depends on $\chi^{\left[1\right]},\chi^{\left[2\right]}$ and smoothly on $t$ but independent of $D,x,k$. 
Moreover, by Theorem \ref{Spectral Theorem}, we obtain
\begin{equation}\label{Ineq: First order term w/ chi_x,k not diff.}
\begin{aligned}
\left\| \chi_{x,k}d_{k}e^{-\frac{t}{k}\Delta_{k}^{\left(r\right)}}\omega\right\|^{2}
\leq \left\| d_{k}e^{-\frac{t}{k}\Delta_{k}^{\left(r\right)}}\omega\right\|^{2}
\leq \left(\Delta_{k}^{\left(r\right)} e^{-\frac{t}{k}\Delta_{k}^{\left(r\right)}}\omega\bigg| e^{-\frac{t}{k}\Delta_{k}^{\left(r\right)}}\omega\right)
\leq C_{4}\left(t\right)k\left\|\omega\right\|^{2}.
\end{aligned}
\end{equation}
Hence, using \eqref{Ineq: First order d_k from first split}, \eqref{Ineq: First order term w/ chi_x,k diff} along with the fact that $\left|x\right|^{-2}<1$, and \eqref{Ineq: First order term w/ chi_x,k not diff.}, we conclude
\begin{equation}\label{Ineq: First term estimate from first split}
\begin{aligned}
\left\| d_{k}\left( \chi_{x,k}e^{-\frac{t}{k}\Delta_{k}^{\left(r\right)}}\omega\right) \right\|^{2} \leq C_{5}\left(\chi^{\left[1\right]},\chi^{\left[2\right]},t\right)k\left\|\omega\right\|^{2},
\end{aligned}
\end{equation}
where $C_{5}\left(\chi^{\left[1\right]},\chi^{\left[2\right]},t\right)$ depends on $\chi^{\left[1\right]},\chi^{\left[2\right]}$ and smoothly on $t$ but independent of $D,x,k$.

We can achieve an upper bound for the second term in the similar fashion and let us briefly go through the deduction. By \eqref{Eq: d_k^* chi_k eta} with $\eta=e^{-\frac{t}{k}\Delta_{k}^{\left(r\right)}}\omega$, we see
\begin{equation}\label{Ineq: First order d_k^* from first split}
\begin{aligned}
\left\| d_{k}^{*}\left( \chi_{x,k} e^{-\frac{t}{k}\Delta_{k}^{\left(r\right)}}\omega\right) \right\|^{2} \leq 4\left( \left\| \ast \left( d\chi_{x,k}\wedge\ast e^{-\frac{t}{k}\Delta_{k}^{\left(r\right)}}\omega\right) \right\|^{2}+ \left\| \chi_{x,k}d_{k}^{*}e^{-\frac{t}{k}\Delta_{k}^{\left(r\right)}}\omega\right\|^{2}\right);
\end{aligned}
\end{equation}
by doing so, we break down this $L^{2}$-norm term into two terms, one is with $\chi_{x,k}$ differentiated and the other without. By local nature of $\chi_{x,k}$ along with Theorem \ref{Spectral Theorem}, we can obtain
\begin{equation}\label{Ineq: ast d chi_x,k wedge estimate}
\begin{aligned}
\left\| \ast \left( d\chi_{x,k}\wedge\ast e^{-\frac{t}{k}\Delta_{k}^{\left(r\right)}}\omega\right) \right\|^{2} \leq C_{6}\left(\chi^{\left[1\right]},\chi^{\left[2\right]},t\right)k\left|x\right|^{-2} \left\|\omega\right\|^{2}.
\end{aligned}
\end{equation}
By Theorem \ref{Spectral Theorem}, we can see
\begin{equation}\label{Ineq: chi_x,k d_k^* estimate}
\begin{aligned}
\left\| \chi_{x,k} d_{k}^{*} e^{-\frac{t}{k}\Delta_{k}^{\left(r\right)}}\omega\right\|^{2} \leq C_{7}\left(\chi^{\left[1\right]},\chi^{\left[2\right]},t\right)k\left\|\omega\right\|^{2}.
\end{aligned}
\end{equation}
Finally, by \eqref{Ineq: First order d_k^* from first split}, \eqref{Ineq: ast d chi_x,k wedge estimate}, and \eqref{Ineq: chi_x,k d_k^* estimate}, we can conclude
\begin{equation}\label{Ineq: Second term estimate from first split}
\begin{aligned}
\left\| d_{k}^{*}\left( \chi_{x,k} e^{-\frac{t}{k}\Delta_{k}^{\left(r\right)}}\omega\right) \right\|^{2}  \leq C_{8}\left(\chi^{\left[1\right]},\chi^{\left[2\right]},t\right)k \left\|\omega\right\|^{2},
\end{aligned}
\end{equation}
where $C_{8}\left(\chi^{\left[1\right]},\chi^{\left[2\right]},t\right)$ depends on $\chi^{\left[1\right]},\chi^{\left[2\right]}$ and smoothly on $t$ but independent of $D,x,k$.

Subsequently, by \eqref{Ineq: First split using Bochner}, \eqref{Ineq: First term estimate from first split}, and \eqref{Ineq: Second term estimate from first split}, we conclude
\begin{equation*}
\begin{aligned}
\left\| \chi_{x,k} e^{-\frac{t}{k}\Delta_{k}^{\left(r\right)}}\omega \right\|^{2}
\leq C_{9}\left(\chi^{\left[1\right]},\chi^{\left[2\right]},t\right)\left|x\right|^{-2} \left\|\omega\right\|^{2},
\end{aligned}
\end{equation*}
where $C_{9}\left(\chi^{\left[1\right]},\chi^{\left[2\right]},t\right)$ depends on $\chi^{\left[1\right]},\chi^{\left[2\right]}$ and smoothly on $t$ but independent of $D,x,k$. Namely, we have established \eqref{Ineq: L^2 estimate bdd by |x|^-N w/o diff.} for $N=1$.

Similarly, we can obtain \eqref{Ineq: L^2 estimate bdd by |x|^-N w/o diff.} holds for each positive integer $N>1$. To explain, we divide it into two cases: $N$ is even and is odd.  If $N$ is even, then we can apply Lemma \ref{Lem: Bochner type estimate (Pointwise)} repeatedly until we obtain
\begin{equation*}
\begin{aligned}
&\left\| \chi_{x,k} e^{-\frac{t}{k}\Delta_{k}^{\left(r\right)}}\omega \right\|^{2}\\
&\leq C_{10}k^{-N}\left|x\right|^{-2N}
\left(  \left\| \left(d_{k}d_{k}^{*}\right)^{\frac{N}{2}} \left(\chi_{x,k} e^{-\frac{t}{k}\Delta_{k}^{\left(r\right)}}\omega\right) \right\|^{2} + \left\| \left(d_{k}^{*}d_{k}\right)^{ \frac{N}{2}} \left(\chi_{x,k} e^{-\frac{t}{k}\Delta_{k}^{\left(r\right)}}\omega\right) \right\|^{2} \right);
\end{aligned}
\end{equation*}
By the Leibniz rules \eqref{Eq: d_k chi_k eta}, \eqref{Eq: d_k^* chi_k eta},  local nature of $\chi_{x,k}$, and Theorem \ref{Spectral Theorem}, we can deduce
\begin{equation*}
\begin{aligned}
\left\| \chi_{x,k} e^{-\frac{t}{k}\Delta_{k}^{\left(r\right)}}\omega \right\|^{2}
&\leq C_{11}\left(\chi^{\left[1\right]},\chi^{\left[2\right]},t,N\right) k^{-N}\left|x\right|^{-2N} \cdot  k^{N} \left\|\omega\right\|^{2} \\
&= C_{11}\left(\chi^{\left[1\right]},\chi^{\left[2\right]},t,N\right) \left|x\right|^{-2N}\left\|\omega\right\|^{2},
\end{aligned}
\end{equation*}
where $C_{11}\left(\chi^{\left[1\right]},\chi^{\left[2\right]},t,N\right)$ depends on $N$ due to \eqref{Eq: d_k chi_k eta} and \eqref{Eq: d_k^* chi_k eta} but is independent of $D,x,k$.  

If $N$ is odd, we can obtain
\begin{equation*}
\begin{aligned}
&\left\| \chi_{x,k} e^{-\frac{t}{k}\Delta_{k}^{\left(r\right)}}\omega \right\|^{2}\\
&\leq C_{12}k^{-N}\left|x\right|^{-2N}
\left(  \left\| \left(d_{k}d_{k}^{*}\right)^{\frac{N-1}{2}}d_{k} \left(\chi_{x,k} e^{-\frac{t}{k}\Delta_{k}^{\left(r\right)}}\omega\right) \right\|^{2} + \left\| \left(d_{k}^{*}d_{k}\right)^{ \frac{N-1}{2}}d_{k}^{*} \left(\chi_{x,k} e^{-\frac{t}{k}\Delta_{k}^{\left(r\right)}}\omega\right) \right\|^{2} \right)
\end{aligned}
\end{equation*}
and so we can proceed to obtain
\begin{equation*}
\begin{aligned}
\left\| \chi_{x,k} e^{-\frac{t}{k}\Delta_{k}^{\left(r\right)}}\omega \right\|^{2}\leq C_{13}\left(\chi^{\left[1\right]},\chi^{\left[2\right]},t,N\right) \left|x\right|^{-2N}\left\|\omega\right\|^{2}.
\end{aligned}
\end{equation*}

Hence, we have established Theorem \ref{Thm: L^2 estimate bdd by |x|^-N w/o diff.}.
\end{proof}

More generally, using similar iterative argument, we can in effect obtain the following $L^{2}$-estimate involving the Witten Laplacian $\Delta_{k}^{\left(r\right)}$:
\begin{coro} \label{Cor: L^2 estimate bdd by |x|^-N}
Let $D>1$ be large enough such that Lemma \ref{Lem: Bochner type estimate (Pointwise)} holds, and for large $k>0$ and for each $x\in B_{k^{\varepsilon}}\left(0\right)\setminus B_{2D}\left(0\right)$, let $\chi_{x,k}$ be the cut-off function given by the cut-off function $\chi^{\left[1\right]},\chi^{\left[2\right]}$ as in \eqref{Eq: Defi of chi_x,k}. Then for each $m\in \mathbb{N}\cup\left\{0\right\}$ and for each $N\in \mathbb{N}$,
\begin{equation}\label{Ineq: Adapted L^2 estimate in between}
\left\| \left(\Delta_{k}^{\left(r\right)}\right)^{m} \left( \chi_{x,k}e^{-\frac{t}{k}\Delta_{k}^{\left(r\right)}}\omega\right) \right\| 
\leq C\left(\chi^{\left[1\right]},\chi^{\left[2\right]}, t,m,N\right) k^{m}\left|x\right|^{-N}\left\| \omega\right\|,
\end{equation}
for each $\omega\in \Omega^{r}\left(M\right)$, where $C\left(\chi^{\left[1\right]},\chi^{\left[2\right]}, t,m,N\right)$ depends on $\chi^{\left[1\right]},\chi^{\left[2\right]}, m, N$ and smoothly on $t$ but is independent of $x$ and $k$.
\end{coro}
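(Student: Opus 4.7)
The plan is to extend the proof of Theorem \ref{Thm: L^2 estimate bdd by |x|^-N w/o diff.} by combining iterated applications of the Bochner-type inequality of Lemma \ref{Lem: Bochner type estimate (Pointwise)} with a Leibniz expansion and the spectral theorem. Set $\eta := e^{-\frac{t}{k}\Delta_k^{(r)}}\omega$. Because $\Delta_k^{(r)}$ is a differential operator, $(\Delta_k^{(r)})^m(\chi_{x,k}\eta)$ remains supported in $A_x^k$, so Lemma \ref{Lem: Bochner type estimate (Pointwise)} applies to it. Combining Lemma \ref{Lem: Bochner type estimate (Pointwise)} with Cauchy--Schwarz yields the bound $\|\sigma\|\leq C\,k^{-1}|x|^{-2}\|\Delta_k^{(r)}\sigma\|$ for every $\sigma\in\Omega^r(M)$ with $\supp\sigma\subset A_x^k$. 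Iterating this inequality $p$ times with $\sigma=(\Delta_k^{(r)})^m(\chi_{x,k}\eta)$ produces
\begin{equation*}
\left\|(\Delta_k^{(r)})^m(\chi_{x,k}\eta)\right\| \leq C^p\, k^{-p}|x|^{-2p}\left\|(\Delta_k^{(r)})^{m+p}(\chi_{x,k}\eta)\right\|.
\end{equation*}

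Taking $p=\lceil N/2\rceil$, it then suffices to prove $\|(\Delta_k^{(r)})^{m+p}(\chi_{x,k}\eta)\|\leq C(\chi^{[1]},\chi^{[2]},t,m,p)\,k^{m+p}\|\omega\|$. For this I would Leibniz-expand the left-hand side in the flat coordinates on $U_p$. By \eqref{Eq: Local expression of Witten Laplacian under flat metric}, on each component $\Delta_k^{(r)}$ is the scalar operator $-\sum_i\partial_i^2 + k^2|x|^2 + c_I k$, and the $k$-dependent potential part is multiplication, hence commutes with $\chi_{x,k}$; the commutator $[\Delta_k^{(r)},\chi_{x,k}]$ therefore reduces to $[\Delta^{(r)},\chi_{x,k}]$, a first-order operator with coefficients in $D\chi_{x,k}$ and $\Delta\chi_{x,k}$ carrying no explicit $k$-dependence. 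Iterating the identity $\Delta_k^{(r)}(\chi_{x,k}\,\cdot)=\chi_{x,k}\Delta_k^{(r)}+[\Delta_k^{(r)},\chi_{x,k}]$ shows that $(\Delta_k^{(r)})^{m+p}(\chi_{x,k}\eta)$ is a finite sum of terms $(D^\alpha\chi_{x,k})\cdot Q_\alpha\eta$ in which $Q_\alpha$ is a differential operator of total order $2(m+p)-|\alpha|$ built from $\partial_i$, $d_k$, $d_k^*$ and $\Delta_k^{(r)}$.

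Each term is then controlled by the two ingredients already used in the proof of Theorem \ref{Thm: L^2 estimate bdd by |x|^-N w/o diff.}. The bound \eqref{Ineq: Supnorm of chi_x,k} gives $\sup|D^\alpha\chi_{x,k}|\leq C(\chi^{[1]},\chi^{[2]})\,|x|^{-|\alpha|}k^{|\alpha|/2}$. By Theorem \ref{Spectral Theorem} applied to $\eta=e^{-\frac{t}{k}\Delta_k^{(r)}}\omega$, together with the elementary bound $s^{q}e^{-2ts/k}\leq (q/(2et))^{q}\,k^{q}$ on $s\geq 0$, any $Q_\alpha$ of order $q=2(m+p)-|\alpha|$ satisfies $\|Q_\alpha\eta\|\leq C(t,m,p)\,k^{q/2}\|\omega\|$; strings in $d_k,d_k^*$ are reduced to powers of $\Delta_k^{(r)}$ via $\|Q_\alpha\eta\|^2=(Q_\alpha^*Q_\alpha\eta|\eta)$. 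Multiplying and summing the finitely many terms, and using $|x|>1$ so $|x|^{-|\alpha|}\leq 1$, yields $\|(\Delta_k^{(r)})^{m+p}(\chi_{x,k}\eta)\|\leq C(\chi^{[1]},\chi^{[2]},t,m,p)\,k^{m+p}\|\omega\|$, which together with the iterated Bochner estimate gives \eqref{Ineq: Adapted L^2 estimate in between}. The principal technical obstacle is the Leibniz bookkeeping of the second step, that is, verifying by induction on $m+p$ that every term produced by the expansion has the form $(D^\alpha\chi_{x,k})Q_\alpha\eta$ with the claimed total order; this mirrors exactly the even/odd $N$ analysis at the end of the proof of Theorem \ref{Thm: L^2 estimate bdd by |x|^-N w/o diff.}.
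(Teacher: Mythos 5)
Your overall architecture matches the paper's: iterate the Bochner estimate to manufacture the factor $\left|x\right|^{-N}$, then Leibniz-expand the remaining high-power term and close with \eqref{Ineq: Supnorm of chi_x,k} and the spectral theorem. Two minor variations are worth flagging. First, the iteration step: you combine Lemma \ref{Lem: Bochner type estimate (Pointwise)} with Cauchy--Schwarz to get $\left\|\sigma\right\|\leq Ck^{-1}\left|x\right|^{-2}\left\|\Delta_{k}^{\left(r\right)}\sigma\right\|$ and iterate to land on $\left(\Delta_{k}^{\left(r\right)}\right)^{m+p}\left(\chi_{x,k}\eta\right)$; the paper instead iterates $\left(\Delta_{k}^{\left(r\right)}\sigma|\sigma\right)=\left\|d_{k}\sigma\right\|^{2}+\left\|d_{k}^{*}\sigma\right\|^{2}$ and exploits $d_{k}^{2}=0=\left(d_{k}^{*}\right)^{2}$ to split into $\left(d_{k}d_{k}^{*}\right)^{N/2+m}$ and $\left(d_{k}^{*}d_{k}\right)^{N/2+m}$. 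Since $\left(\Delta_{k}^{\left(r\right)}\right)^{q}=\left(d_{k}d_{k}^{*}\right)^{q}+\left(d_{k}^{*}d_{k}\right)^{q}$, these two bookkeepings are interchangeable, so this is presentational.

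Second, and more substantively, in the Leibniz step you write $\left[\Delta_{k}^{\left(r\right)},\chi_{x,k}\right]=\left[\Delta^{\left(r\right)},\chi_{x,k}\right]$ and describe this as a first-order operator in $D\chi_{x,k}$ and $\Delta\chi_{x,k}$; phrased via $\nabla$ (i.e.\ $-2\langle\nabla\chi_{x,k},\nabla\rangle-\Delta\chi_{x,k}$), the operators $Q_{\alpha}$ you generate involve $\partial_{i}$'s, and the spectral theorem for $\Delta_{k}^{\left(r\right)}$ does not directly control $\left\|\partial_{i}^{j}\left(\Delta_{k}^{\left(r\right)}\right)^{l}\eta\right\|$ --- it controls only functions of $\Delta_{k}^{\left(r\right)}$ applied to $\eta$. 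To close this gap one either needs a Gårding/elliptic estimate, or (better, and this is what the paper implicitly does) one should compute the commutator entirely inside the $d_{k},d_{k}^{*}$ algebra using the Leibniz rules \eqref{Eq: d_k chi_k eta}, \eqref{Eq: d_k^* chi_k eta}, whose commutators with $\chi_{x,k}$ are \emph{zeroth}-order wedge/contraction factors in $d\chi_{x,k}$: concretely $\left[\Delta_{k}^{\left(r\right)},\chi_{x,k}\right]=d_{k}\left[d_{k}^{*},\chi_{x,k}\right]+\left[d_{k},\chi_{x,k}\right]d_{k}^{*}+d_{k}^{*}\left[d_{k},\chi_{x,k}\right]+\left[d_{k}^{*},\chi_{x,k}\right]d_{k}$, with $\left[d_{k},\chi_{x,k}\right]=d\chi_{x,k}\wedge$ and $\left[d_{k}^{*},\chi_{x,k}\right]=-\iota_{\nabla\chi_{x,k}}$. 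Then every $Q_{\alpha}$ is a string of $d_{k}$'s and $d_{k}^{*}$'s and your reduction $\left\|Q_{\alpha}\eta\right\|^{2}=\left(Q_{\alpha}^{*}Q_{\alpha}\eta|\eta\right)\leq\left(\left(\Delta_{k}^{\left(r\right)}\right)^{q}\eta|\eta\right)$ works without further ado. With that amendment your proposal carries through and is essentially the same argument as the paper's.
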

\begin{proof}
Corollary \ref{Cor: L^2 estimate bdd by |x|^-N} can be obtained similarly to Theorem \ref{Thm: L^2 estimate bdd by |x|^-N w/o diff.}. 

Since $d_{k}^{2}=0=\left(d_{k}^{*}\right)^{2}$, we see that 
\begin{equation*}
\begin{aligned}
\left\| \left(\Delta_{k}^{\left(r\right)}\right)^{m} \left( \chi_{x,k}e^{-\frac{t}{k}\Delta_{k}^{\left(r\right)}}\omega\right) \right\|^{2} 
\leq 4\left(  \left\| \left( d_{k}d_{k}^{*}\right)^{m}  \left( \chi_{x,k}e^{-\frac{t}{k}\Delta_{k}^{\left(r\right)}}\omega\right)\right\|^{2}+  \left\| \left( d_{k}^{*}d_{k}\right)^{m}  \left( \chi_{x,k}e^{-\frac{t}{k}\Delta_{k}^{\left(r\right)}}\omega\right)\right\|^{2}\right).
\end{aligned}
\end{equation*}

As in the proof of Theorem \ref{Thm: L^2 estimate bdd by |x|^-N w/o diff.}, repeated use of Lemma \ref{Lem: Bochner type estimate (Pointwise)} gives 
\begin{equation*}
\begin{aligned}
&\left\| \left(\Delta_{k}^{\left(r\right)}\right)^{m} \left(\chi_{x,k} e^{-\frac{t}{k}\Delta_{k}^{\left(r\right)}}\omega\right) \right\|^{2}\\
&\leq C_{1}k^{-N}\left|x\right|^{-2N}
\left(  \left\| \left(d_{k}d_{k}^{*}\right)^{\frac{N}{2}+m} \left(\chi_{x,k} e^{-\frac{t}{k}\Delta_{k}^{\left(r\right)}}\omega\right) \right\|^{2} + \left\| \left(d_{k}^{*}d_{k}\right)^{ \frac{N}{2}+m} \left(\chi_{x,k} e^{-\frac{t}{k}\Delta_{k}^{\left(r\right)}}\omega\right) \right\|^{2} \right);
\end{aligned}
\end{equation*}
for each even positive integer $N$, and
\begin{equation*}
\begin{aligned}
&\left\| \left(\Delta_{k}^{\left(r\right)}\right)^{m} \left(\chi_{x,k} e^{-\frac{t}{k}\Delta_{k}^{\left(r\right)}}\omega\right) \right\|^{2}\\
&\leq C_{1}k^{-N}\left|x\right|^{-2N}
\left(  \left\| \left(d_{k}d_{k}^{*}\right)^{\frac{N-1}{2}+m}d_{k} \left(\chi_{x,k} e^{-\frac{t}{k}\Delta_{k}^{\left(r\right)}}\omega\right) \right\|^{2} + \left\| \left(d_{k}^{*}d_{k}\right)^{ \frac{N-1}{2}+m}d_{k}^{*} \left(\chi_{x,k} e^{-\frac{t}{k}\Delta_{k}^{\left(r\right)}}\omega\right) \right\|^{2} \right);
\end{aligned}
\end{equation*}
for each odd positive integer $N$. Hence, the Leibniz rules \eqref{Eq: d_k chi_k eta}, \eqref{Eq: d_k^* chi_k eta}, the local nature of $\chi_{x,k}$ (see \eqref{Ineq: Supnorm of chi_x,k}) and Theorem \ref{Spectral Theorem} allow us to establish Corollary \ref{Cor: L^2 estimate bdd by |x|^-N}. Note that the constant $C\left(\chi^{\left[1\right]},\chi^{\left[2\right]}, t,m,N\right)$ in \eqref{Ineq: Adapted L^2 estimate in between} depends on $m$ due to not only the Leibniz rules \eqref{Eq: d_k chi_k eta} and \eqref{Eq: d_k^* chi_k eta} but also Theorem \ref{Spectral Theorem}, and on $N$ due to again \eqref{Eq: d_k chi_k eta} and \eqref{Eq: d_k^* chi_k eta}.
\end{proof}

Finally, we can show the following mapping property:
\begin{theo} \label{Thm: Mapping property outside criti pts 2}
Let $D>1$ be large enough such that Lemma \ref{Lem: Bochner type estimate (Pointwise)} holds, and for large $k>0$ and for each $x\in B_{k^{\varepsilon}}\left(0\right)\setminus B_{2D}\left(0\right)$, let $\chi_{x,k}$ be the cut-off function given by the cut-off function $\chi^{\left[1\right]},\chi^{\left[2\right]}$ as in \eqref{Eq: Defi of chi_x,k}. Then for each $m\in \mathbb{N}\cup\left\{0\right\}$ and for each $N\in \mathbb{N}$, there exists $C\left(\chi^{\left[1\right]},\chi^{\left[2\right]}, t,m,N\right)>0$ such that
\begin{equation}\label{Ineq: improved version of mapping property}
\begin{aligned}
\left\| \left(\chi_{x,k}\right)_{\left[\frac{1}{k}\right]} A_{\left(k\right),p}^{r}\left(t\right)\left(\chi_{x,k}\right)_{\left[\frac{1}{k}\right]} \omega \right\|_{2m}\leq C\left(\chi^{\left[1\right]},\chi^{\left[2\right]}, t,m,N\right) \left|x\right|^{-N} \left\|\omega\right\|_{-2m},
\end{aligned}
\end{equation}
for each $\omega\in \Omega^{r}_{c}\left(B_{k^{\varepsilon}}\left(0\right)\right)$,
where $C\left(\chi^{\left[1\right]},\chi^{\left[2\right]}, t,m,N\right)$ depends on $\chi^{\left[1\right]},\chi^{\left[2\right]},m,N$ and smoothly on $t$ and is independent of $D$, $k$,$x$.
\end{theo}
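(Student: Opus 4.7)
The strategy is to replicate the architecture of Theorem \ref{Critical Thm: mapping property of s.h.o.}, replacing the fixed cut-off $\chi$ there by the annular moving cut-off $(\chi_{x,k})_{[1/k]}$ here, and injecting Corollary \ref{Cor: L^2 estimate bdd by |x|^-N} at the bottom-level $L^2$ estimate in order to harvest the decay $|x|^{-N}$. First I apply Gårding's inequality with auxiliary cut-offs $\tilde{\chi}_{x,k}, \chi_{1,x,k}, \chi_{2,x,k}$ supported in slightly enlarged annuli around $\supp(\chi_{x,k})_{[1/k]}$, each built from the same fixed pair $\chi^{[1]},\chi^{[2]}$ by the same scaling recipe as in \eqref{Eq: Defi of chi_x,k}, so that every derivative bound scales uniformly in the factor $|x|^{-1}\sqrt{k}$. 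This reduces the Sobolev-$2m$ norm on the left side of \eqref{Ineq: improved version of mapping property} to two $L^2$-norms, one of which carries $(\Delta_{f,p}^{(r)})^m$.

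Next I dualize the first $L^2$-norm as a supremum over test forms $\eta\in\Omega^r_c$: using self-adjointness of $A^{r}_{(k),p}(t)$, of $\Delta_{f,p}^{(r)}$, and of the real multiplication operators, I transfer $\chi_{1,x,k}(\Delta_{f,p}^{(r)})^m(\chi_{x,k})_{[1/k]}$ onto the $\eta$-slot. Invoking the definition of $\|\omega\|_{-2m}$ produces the factor $\|\omega\|_{-2m}$ times a quantity of the form $\|(\chi_{x,k})_{[1/k]}A^{r}_{(k),p}(t)(\chi_{x,k})_{[1/k]}(\Delta_{f,p}^{(r)})^m\chi_{1,x,k}\eta\|_{2m}/\|\eta\|_0$. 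A second application of Gårding (with the same $(x,k)$-matched family of cut-offs) reduces this in turn to $L^2$-norms of expressions such as $\|\chi_{3,x,k}(\Delta_{f,p}^{(r)})^{m'}(\chi_{x,k})_{[1/k]}A^{r}_{(k),p}(t)(\chi_{x,k})_{[1/k]}(\Delta_{f,p}^{(r)})^{m}\chi_{1,x,k}\eta\|_0$ for $m'\in\{0,m\}$, exactly mirroring the chain of estimates in Theorem \ref{Critical Thm: mapping property of s.h.o.}.

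Now I translate each such $L^2$-estimate to the unscaled side via the scaling identities \eqref{Scaling formula 1} and \eqref{Eq: Scaling formula for heat kernel}: each $(\chi_{x,k})_{[1/k]}$ becomes $\chi_{x,k}$, each $\Delta_{f,p}^{(r)}$ contributes a factor $k^{-1}$, and each $A^{r}_{(k),p}(t)$ becomes $e^{-(t/k)\Delta_k^{(r)}}$, with $L^2$-norms rescaled by an overall factor of $k^{n/4}$. The resulting quantity has exactly the form bounded by Corollary \ref{Cor: L^2 estimate bdd by |x|^-N}, namely
\begin{equation*}
\left\|(\Delta_k^{(r)})^{j}\bigl(\chi_{x,k}\, e^{-(t/k)\Delta_k^{(r)}}\alpha\bigr)\right\|_{L^2(M)}\leq C(\chi^{[1]},\chi^{[2]},t,j,N)\,k^{j}\,|x|^{-N}\,\|\alpha\|_{L^2(M)},
\end{equation*}
with $\alpha$ built from the test form $\eta$ by applying further powers of $\Delta_k^{(r)}$ and bounded cut-offs. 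The spectral theorem (Theorem \ref{Spectral Theorem}) applied to those remaining $\Delta_k^{(r)}$ factors bounds $\|\alpha\|_{L^2(M)}$ by a constant depending smoothly on $t$ times $\|\eta\|_0$, precisely as in the $L^2$ reductions \eqref{Ineq: L^2 estimate 1 for dual 1}--\eqref{Ineq: L^2 estimate 1 for dual 2}. Gathering the powers of $k$ from scaling and from Corollary \ref{Cor: L^2 estimate bdd by |x|^-N}, they cancel out, and one is left with the clean bound $C|x|^{-N}\|\omega\|_{-2m}$.

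\textbf{Main obstacle.} The principal difficulty is keeping every Gårding constant and every cut-off-dependent constant independent of $(x,k)$. Since $(\chi_{x,k})_{[1/k]}$ itself depends on $(x,k)$, all auxiliary cut-offs used in the two Gårding applications must likewise depend on $(x,k)$; uniformity is salvaged only because each of them is obtained from the fixed pair $\chi^{[1]},\chi^{[2]}$ by the same scaling construction, so that their $\mathcal{C}^\ell$-norms satisfy bounds of the type \eqref{Ineq: Supnorm of chi_x,k}, and the scale-dependent factors cancel against the $k$-scalings in \eqref{Scaling formula 1} and \eqref{Eq: Scaling formula for heat kernel}. Bookkeeping these cancellations—in particular tracking how the powers of $k$ from the iterated identity $(\Delta_{f,p}^{(r)})^m\beta = k^{-m}((\Delta_k^{(r)})^m\beta_{[k]})_{[1/k]}$ conspire with the factor $k^{j}$ supplied by Corollary \ref{Cor: L^2 estimate bdd by |x|^-N} and the $L^2$-scaling factor $k^{n/4}$—is the main calculational hurdle, and is what makes this theorem genuinely stronger than Theorem \ref{Critical Thm: mapping property of s.h.o.}.
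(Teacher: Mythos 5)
Your proposal follows essentially the same route the paper takes: two applications of Gårding's inequality sandwiching a duality step, with the scaling identities \eqref{Scaling formula 1} and \eqref{Eq: Scaling formula for heat kernel} used to translate the $L^2$ estimates back to the unscaled Witten Laplacian so that Corollary \ref{Cor: L^2 estimate bdd by |x|^-N} and Theorem \ref{Spectral Theorem} can be applied, while tracking powers of $k$ through the chain so that they cancel. You correctly identify the delicate point, namely that all auxiliary cut-offs must come from the fixed pair $\chi^{[1]},\chi^{[2]}$ by the same dilation recipe so that the Gårding and commutator constants stay uniform in $(x,k)$, which is precisely what the paper arranges with its $\tilde{\chi}_1,\tilde{\chi}_2$ built from $\chi^{[1]},\chi^{[2]}$ and supported in $\sqrt{k}A_x^k$.
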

\begin{proof}

Thanks to Gårding's inequality and \eqref{Scaling formula 1}, \eqref{Eq: Scaling formula for heat kernel}, we obtain
\begin{equation*}
\begin{aligned}
&\left\| \left(\chi_{x,k}\right)_{\left[\frac{1}{k}\right]} A_{\left(k\right),p}^{r}\left(t\right)\left(\chi_{x,k}\right)_{\left[\frac{1}{k}\right]} \omega \right\|_{2m}\\
&\leq C_{1}\left\| \left(\Delta_{f,p}^{\left(r\right)}\right)^{m}  \left(\chi_{x,k}\right)_{\left[\frac{1}{k}\right]} A_{\left(k\right),p}^{r}\left(t\right)\left(\chi_{x,k}\right)_{\left[\frac{1}{k}\right]} \omega \right\|_{0} 
+C_{2} \left\| \left(\chi_{x,k}\right)_{\left[\frac{1}{k}\right]} A_{\left(k\right),p}^{r}\left(t\right)\left(\chi_{x,k}\right)_{\left[\frac{1}{k}\right]} \omega \right\|_{0} \\
&\leq C_{3}\left(\chi^{\left[1\right]},\chi^{\left[2\right]}\right) \left\| \tilde{\chi}_{1} \left(\Delta_{f,p}^{\left(r\right)}\right)^{m}  A_{\left(k\right),p}^{r}\left(t\right)\left(\chi_{x,k}\right)_{\left[\frac{1}{k}\right]} \omega \right\|_{0} + C_{4}\left(\chi^{\left[1\right]},\chi^{\left[2\right]}\right) \left\|\tilde{\chi}_{2} A_{\left(k\right),p}^{r}\left(t\right)\left(\chi_{x,k}\right)_{\left[\frac{1}{k}\right]} \omega \right\|_{0},
\end{aligned}
\end{equation*}
where $C_{3}\left(\chi^{\left[1\right]},\chi^{\left[2\right]}\right), C_{4}\left(\chi^{\left[1\right]},\chi^{\left[2\right]}\right)$ depends on $\chi^{\left[1\right]}$ and $\chi^{\left[2\right]}$, but is independent of $x,k$ since $\left(\chi_{x,k}\right)_{\left[\frac{1}{k}\right]}\left(\cdot\right)=\chi^{\left[1\right]}\left(\left|x\right|^{-1}\cdot\right)-\chi^{\left[2\right]}\left(\left|x\right|^{-1}\cdot\right)$ with respect to $\left(U_{p}^{k},\varphi_{p}\right)$ and $\left|x\right|>D>1$, and $\tilde{\chi}_{1},\tilde{\chi}_{2}\in \mathcal{C}^{\infty}_{c}\left(\sqrt{k} A_{x}^{k}\right)$ are cut-off functions such that $\tilde{\chi}_{1}=1=\tilde{\chi}_{2}$ in $\sqrt{k}\supp \chi_{x,k}$.

Next, for each $\eta\in \Omega^{r}_{c}\left(B_{k^{\varepsilon}}\left(0\right)\right)$ with $\eta\neq 0$, note that
\begin{equation*}
\begin{aligned}
\frac{\left(\tilde{\chi}_{1} \left(\Delta_{f,p}^{\left(r\right)}\right)^{m}  A_{\left(k\right),p}^{r}\left(t\right)\left(\chi_{x,k}\right)_{\left[\frac{1}{k}\right]} \omega\bigg| \eta\right)}{\left\|\eta\right\|_{0}}
\leq \frac{\left\| \omega\right\|_{-2m}\left\| \left(\chi_{x,k}\right)_{\left[\frac{1}{k}\right]}A_{\left(k\right),p}^{r}\left(t\right)\left(\Delta_{f,p}^{\left(r\right)}\right)^{m} \tilde{\chi}_{1}\eta\right\|_{2m}}{\left\|\eta\right\|_{0}}.
\end{aligned}
\end{equation*}
By Gårding's inequality, Corollary \ref{Cor: L^2 estimate bdd by |x|^-N} and Theorem \ref{Spectral Theorem}, 
\begin{equation*}
\begin{aligned}
&\left\| \left(\chi_{x,k}\right)_{\left[\frac{1}{k}\right]} A_{\left(k\right)}^{r}\left(t\right)\left( \Delta_{f,p}^{\left(r\right)} \right)^{m} \tilde{\chi}_{1}\eta \right\|_{2m}\\
&\leq C_{5}\left\| \left( \Delta_{f,p}^{\left(r\right)}\right)^{m} \left(\chi_{x,k}\right)_{\left[\frac{1}{k}\right]} A_{\left(k\right)}^{r}\left(t\right)\left( \Delta_{f,p}^{\left(r\right)} \right)^{m} \tilde{\chi}_{1}\eta \right\|_{0} 
+ C_{6} \left\| \left(\chi_{x,k}\right)_{\left[\frac{1}{k}\right]} A_{\left(k\right)}^{r}\left(t\right)\left( \Delta_{f,p}^{\left(r\right)} \right)^{m} \tilde{\chi}_{1}\eta \right\|_{0}\\
&= C_{5}k^{\frac{n}{2}-2m} \left\| \left(\Delta_{k}^{\left(r\right)}\right)^{m} \chi_{x,k} e^{-\frac{t}{k}\Delta_{k}^{\left(r\right)}}\left( \Delta_{k}^{\left(r\right)}\right)^{m} \left(\tilde{\chi}_{1}\eta\right)_{\left[k\right]} \right\|_{L^{2}\left(M\right)}\\
&\ +C_{6}k^{\frac{n}{2}-m} \left\| \chi_{x,k} e^{-\frac{t}{k}\Delta_{k}^{\left(r\right)}}\left( \Delta_{k}^{\left(r\right)}\right)^{m} \left(\tilde{\chi}_{1}\eta\right)_{\left[k\right]} \right\|_{L^{2}\left(M\right)}\\
&\leq C_{7}\left(\chi^{\left[1\right]},\chi^{\left[2\right]},\frac{t}{2},m,N\right)\left|x\right|^{-N} k^{\frac{n}{2}-m} \left\| e^{\frac{t}{2k}\Delta_{k}^{\left(r\right)}}\left(\Delta_{k}^{\left(r\right)}\right)^{m}\left(\tilde{\chi}_{1}\eta\right)_{\left[k\right]} \right\|_{L^{2}\left(M\right)}\\
&\leq  C_{8}\left(\chi^{\left[1\right]},\chi^{\left[2\right]},\frac{t}{2},m,N\right)\left|x\right|^{-N} \left\| \eta\right\|_{0},
\end{aligned}
\end{equation*}
where $C_{8}\left(\chi^{\left[1\right]},\chi^{\left[2\right]},\frac{t}{2}\right)$ depends on $\chi^{\left[1\right]},\chi^{\left[2\right]},m,N$ and smoothly on $t$ but  is independent of $D,x,k$. Thus, we deduce
\begin{equation*}
\begin{aligned}
\left\| \tilde{\chi}_{1} \left(\Delta_{f,p}^{\left(r\right)}\right)^{m}  A_{\left(k\right),p}^{r}\left(t\right)\left(\chi_{x,k}\right)_{\left[\frac{1}{k}\right]} \omega \right\|_{0} \leq C_{8}\left(\chi^{\left[1\right]},\chi^{\left[2\right]},\frac{t}{2},m,N\right)  \left|x\right|^{-N} \left\|\omega\right\|_{-2m}.
\end{aligned}
\end{equation*}
Similarly, we can obtain
\begin{equation*}
\begin{aligned}
 \left\|\tilde{\chi}_{2} A_{\left(k\right),p}^{r}\left(t\right)\left(\chi_{x,k}\right)_{\left[\frac{1}{k}\right]} \omega \right\|_{0}
 \leq  C_{9}\left(\chi^{\left[1\right]},\chi^{\left[2\right]},\frac{t}{2},m,N\right)  \left|x\right|^{-N} \left\|\omega\right\|_{-2m},
\end{aligned}
\end{equation*}
where $C_{9}\left(\chi^{\left[1\right]},\chi^{\left[2\right]},\frac{t}{2},m,N\right)$ depends on $\chi^{\left[1\right]},\chi^{\left[2\right]},m,N$ and smoothly on $t$ but is independent of $D,x,k$.

Finally, we conclude
\begin{equation*}
\begin{aligned}
\left\| \left(\chi_{x,k}\right)_{\left[\frac{1}{k}\right]} A_{\left(k\right),p}^{r}\left(t\right)\left(\chi_{x,k}\right)_{\left[\frac{1}{k}\right]} \omega \right\|_{2m}
\leq C_{10} \left(\chi^{\left[1\right]},\chi^{\left[2\right]},t,m,N\right)  \left|x\right|^{-N} \left\|\omega\right\|_{-2m},
\end{aligned}
\end{equation*}
where $C_{10}\left(\chi^{\left[1\right]},\chi^{\left[2\right]},t,m,N\right)$ depends on $\chi^{\left[1\right]},\chi^{\left[2\right]},m,N$ and smoothly on $t$ but is independent of $D,x,k$.

\end{proof}

As a consequence of Theorem \ref{Thm: Mapping property outside criti pts 2}, we can prove Theorem \ref{Main Thm: Asymptotic Behavior of Scaled Heat Kernel in between}:
\begin{proof}[Proof of Theorem \ref{Main Thm: Asymptotic Behavior of Scaled Heat Kernel in between}]
The main key of proving this theorem is similar to Theorem \ref{Main Thm 1: locally uniform boundedness of s.h.}.

Write
\begin{equation*}
\begin{aligned}
A_{\left(k\right),p}^{r}\left(t,x,x\right) 
&=\sideset{}{'}\sum_{I,J} A_{\left(k\right),p}^{r}\phantom{}_{I,J}\left(t,x,x\right) dx^{I} \otimes \left( dx^{J}\right)^{*}\\
&\in \mathcal{C}^{\infty}\left(\mathbb{R}^{+}\times B_{k^{\varepsilon}}\left(0\right)\times B_{k^{\varepsilon}}\left(0\right), \bigwedge^{r}T^{*}\mathbb{R}^{n}\boxtimes \left(\bigwedge^{r}T^{*}\mathbb{R}^{n}\right)^{*}\right).
\end{aligned}
\end{equation*}
Recall that for each $S\in \bigwedge^{r}T^{*}_{x}\mathbb{R}^{n}\otimes \left( \bigwedge^{r}T^{*}_{x}\mathbb{R}^{n}\right)^{*}$, the norm of $S$ is defined to be $\left|S\right|_{x} := \sup_{\omega_{x}\in \bigwedge^{r}T^{*}_{x}\mathbb{R}^{n},\omega_{x}\neq 0}\frac{\left|S\omega_{x}\right|}{\left|\omega_{x}\right|}$, and note that
\begin{equation*}
\begin{aligned}
\left| A_{\left(k\right),p}^{r}\left(t,x,x\right) \right|_{x} \leq \left( \sum_{I,J} \left|  A_{\left(k\right),p}^{r}\phantom{}_{I,J}\left(t,x,x\right) \right|^{2}\right)^{\frac{1}{2}},
\end{aligned}
\end{equation*}
so it suffices to show for any two $I,J$ and for each $N\in \mathbb{N}$,
\begin{equation*}
\begin{aligned}
\left|  A_{\left(k\right),p}^{r}\phantom{}_{I,J}\left(t,x,x\right)  \right|\leq C\left(t,N\right) \left|x\right|^{-N},
\end{aligned}
\end{equation*}
where $C\left(t,N\right)$ depends on $N$ and smoothly on $t$ and is independent of $D,x,k$.  

Let $D>1$ be large enough such that Lemma \ref{Lem: Bochner type estimate (Pointwise)} holds.  Let $x\in B_{k^{\varepsilon}}\left(0\right)\setminus B_{2D}\left(0\right)$, and put $\omega_{x,I,\delta} = \chi_{x,I,\delta} dx^{I}$ and $\omega_{x,J,\delta'} = \chi_{x,J,\delta'} dx^{J}$ as in \eqref{Eq: Approximated identity for form}. By integration by part, we see that
\begin{equation*}
\begin{aligned}
A_{\left(k\right),p}^{r}\phantom{}_{I,J}\left(t,x,x\right) =\lim_{\delta\to 0}\lim_{\delta'\to 0} \left( A_{\left(k\right),p}^{r}\left(t\right) \omega_{x,J,\delta'}\bigg| \omega_{x,I,\delta}\right).
\end{aligned}
\end{equation*}

Now, let $\chi_{x,k}$ be the cut-off function as in \eqref{Eq: Defi of chi_x,k} given by $\chi^{\left[1\right]},\chi^{\left[2\right]}$. Using Theorem \ref{Thm: Mapping property outside criti pts 2} and Lemma \ref{Lem: subseqential convergence of seq. of norms of a.i.} with fixed large $m$, we obtain , if $\delta,\delta'$ small enough, 
\begin{equation*}
\begin{aligned}
\left| \left( A_{\left(k\right),p}^{r}\left(t\right) \omega_{x,J,\delta'}\bigg| \omega_{x,I,\delta}\right) \right| 
&\leq \left\| \left(\chi_{x,k}\right)_{\left[\frac{1}{k}\right]} A_{\left(k\right),p}^{r}\left(t\right)  \left(\chi_{x,k}\right)_{\left[\frac{1}{k}\right]} \omega_{x,J,\delta'} \right\|_{2m} \left\| \omega_{x,I,\delta}\right\|_{-2m} \\
&\leq C\left(\chi^{\left[1\right]},\chi^{\left[2\right]},t,m,N\right) \left|x\right|^{-N},
\end{aligned}
\end{equation*}
where $C\left(\chi^{\left[1\right]},\chi^{\left[2\right]},t,m,N\right)$ depends on $\chi^{\left[1\right]},\chi^{\left[2\right]},m,N$ and smoothly on $t$ but independent of $D,x,k,\delta,\delta'$, for large $k$.

For each $x\in B_{k^{\varepsilon}}\left(0\right)\setminus B_{2D}\left(0\right)$, observe that the cut-off function $\chi_{x,k}$ is constructed from the same cut-off functions $\chi^{\left[1\right]},\chi^{\left[2\right]}$, leading to $A_{\left(k\right),p}^{r}\phantom{}_{I,J}\left(t,x,x\right)$ enjoying the same upper bound. Hence, we conclude
\begin{equation*}
\begin{aligned}
\left|  A_{\left(k\right),p}^{r}\phantom{}_{I,J}\left(t,x,x\right)  \right|\leq C\left(t,N\right) \left|x\right|^{-N},
\end{aligned}
\end{equation*}
where $C\left(t,N\right)$ depends on $N$ and smoothly on $t$ and independent of $D,x,k$ and $N$ is an arbitrary positive integer. (The contribution of $m$ to the constant is insignificant and thus is not marked in the parenthesis). 
\end{proof}

\subsection{Proof of Theorem 1.3} \label{pf. outside 1}

The main idea of this proof is similar to Theorem \ref{Main Thm: Asymptotic Behavior of Scaled Heat Kernel in between}. Recall that for each $p\in \crit \left(f\right)$, we additionally identify the coordinate neighborhood $U_{p}$ of $p$ with the Euclidean ball $B_{\frac{3}{2}}\left(0\right)$ and $p$ with $0$, and for each $k>0$, put
\begin{equation*}
\begin{aligned}
\mathcal{U}^{k} = \bigcup_{p\in \crit\left(f\right)} U_{p}^{k},
\end{aligned}
\end{equation*}
where $U_{p}^{k}$ is identified with the Euclidean ball $B_{k^{-\frac{1}{2}+\varepsilon}}\left(0\right)$, $\varepsilon\in \left(0,\frac{1}{2}\right)$.

First, we need the following estimate of Bochner type. 

\begin{lem} \label{Lem: Bochner-type Estimate}
If $k$ is sufficiently large,
\begin{equation}\label{Ineq: Bochner-type Formula}
\begin{aligned}
\left( \Delta_{k}^{\left(r\right)}\omega | \omega \right) 
\geq Ck^{1+2\varepsilon} \left\| \omega \right\|^{2}
\end{aligned}
\end{equation}
for each $\omega\in \Omega^{r}\left(M\right)$ with $\supp \omega \subset M\setminus \mathcal{U}^{k}$, where $C$ is independent of $k$.
\end{lem}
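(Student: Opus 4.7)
The proof will rest on the global decomposition
\begin{equation*}
\Delta_{k}^{(r)} = \Delta^{(r)} + k^{2}|df|^{2} + k\bigl(\mathcal{L}_{\nabla f} + \mathcal{L}_{\nabla f}^{*}\bigr)
\end{equation*}
from \eqref{Eq: Global expression of Witten Laplacian}. My plan is to establish a uniform pointwise lower bound on the potential $|df|^{2}$ on $M \setminus \mathcal{U}^{k}$ and then use it to dominate the remaining terms for $k$ large.

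The core step is to show that $|df|^{2} \geq k^{-1+2\varepsilon}$ pointwise on $M \setminus \mathcal{U}^{k}$ once $k$ is sufficiently large. I would decompose
\begin{equation*}
M \setminus \mathcal{U}^{k} = \Bigl( \bigsqcup_{p \in \crit(f)} \bigl(U_{p} \setminus U_{p}^{k}\bigr) \Bigr) \cup \Bigl( M \setminus \bigcup_{p \in \crit(f)} U_{p} \Bigr).
\end{equation*}
On each annular piece $U_{p} \setminus U_{p}^{k}$, the locally flat metric of Theorem \ref{Thm: Locally Flat Metric} together with the Morse normal form yields $|df|^{2} = |x|^{2}$, so the condition $|x| \geq k^{-1/2+\varepsilon}$ forces $|df|^{2} \geq k^{-1+2\varepsilon}$. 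On the compact complement $M \setminus \bigcup_{p} U_{p}$, which contains no critical point, continuity yields $|df|^{2} \geq c_{0} > 0$; since $\varepsilon < 1/2$, the quantity $k^{-1+2\varepsilon}$ tends to $0$ and is dominated by $c_{0}$ for $k$ large. Combining both regions, $k^{2}|df|^{2} \geq k^{1+2\varepsilon}$ on $M \setminus \mathcal{U}^{k}$, whence $(k^{2}|df|^{2}\omega\,|\,\omega) \geq k^{1+2\varepsilon}\|\omega\|^{2}$.

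The remaining terms are handled as in Lemma \ref{Lem: Bochner type estimate (Pointwise)}. The Hodge Laplacian is non-negative, so $(\Delta^{(r)}\omega\,|\,\omega) \geq 0$. The local expression \eqref{Eq: Local expression of Witten Laplacian}, together with a partition of unity and compactness of $M$, shows that $A := \mathcal{L}_{\nabla f} + \mathcal{L}_{\nabla f}^{*}$ is a zeroth-order operator whose pointwise norm is controlled by $|\hess_{f}|$, so $|(A\omega\,|\,\omega)| \leq C_{1}\|\omega\|^{2}$ for some $C_{1}$ independent of $k$ and $\omega$. Combining,
\begin{equation*}
(\Delta_{k}^{(r)}\omega\,|\,\omega) \geq k^{1+2\varepsilon}\|\omega\|^{2} - C_{1}k\|\omega\|^{2} = k\bigl(k^{2\varepsilon} - C_{1}\bigr)\|\omega\|^{2},
\end{equation*}
and for $k$ large enough that $k^{2\varepsilon} \geq 2C_{1}$, the right-hand side is at least $\tfrac{1}{2}k^{1+2\varepsilon}\|\omega\|^{2}$, giving \eqref{Ineq: Bochner-type Formula} with $C = 1/2$.

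There is no real obstacle: the estimate is a direct consequence of the Morse normal form and the global splitting of $\Delta_{k}^{(r)}$, and is simpler than the pointwise refinement of Lemma \ref{Lem: Bochner type estimate (Pointwise)}, which required an $x$-dependent scaling to track $|df|^{2}$ at different distances from a critical point. Here the single uniform radius $k^{-1/2+\varepsilon}$ of each $U_{p}^{k}$ produces a single scale, and the positive power $k^{2\varepsilon}$ is exactly what is needed to absorb the linear-in-$k$ perturbation $kA$.
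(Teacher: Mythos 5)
Your proof is correct and follows essentially the same route as the paper: a uniform lower bound $|df|^{2}\geq k^{-1+2\varepsilon}$ on $M\setminus\mathcal{U}^{k}$ (obtained from the flat Morse charts on the annuli $U_{p}\setminus U_{p}^{k}$ and compactness of the critical-point-free complement), together with absorption of the zeroth-order term $kA$ by the dominant $k^{1+2\varepsilon}$ potential term. The paper phrases the $|df|^{2}$ estimate tersely (``since $M$ is compact and $df=0$ at $p\in\crit(f)$'') and works with the one-sided bound $(A\omega|\omega)\geq m\|\omega\|^{2}$ rather than your $|(A\omega|\omega)|\leq C_{1}\|\omega\|^{2}$, but these are cosmetic differences and your more explicit decomposition is precisely what the paper's compactness argument uses.
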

\begin{proof}

The key point of this proof  is virtually the same as in Lemma \ref{Lem: Bochner type estimate (Pointwise)}.

Since $M$ is compact and $df=0$ at $p\in \crit\left(f\right)$, we can see
\begin{equation*}
\begin{aligned}
\left|df\right|^{2} \geq k^{-1+2\varepsilon}
\end{aligned}
\end{equation*}
in $M\setminus \mathcal{U}^{k}$ if $k$ is sufficiently large; furthermore, we can let $k$ large enough such that
\begin{equation*}
\begin{aligned}
\left| \frac{m}{k^{2\varepsilon}}\right| <\frac{1}{2},
\end{aligned}
\end{equation*}
where $m\in \mathbb{R}$ is given from the fact that
$\left(A\omega | \omega\right) \geq m \left\| \omega\right\|^{2}$ for each $\omega\in \Omega^{r}\left(M\right)$ in which $A=\mathcal{L}_{\nabla f}+\mathcal{L}_{\nabla f}^{*}$. Hence, for each $\omega\in \Omega^{r}\left(M\right)$ with $\supp \omega \subset M\setminus \mathcal{U}^{k}$, we deduce
\begin{equation*}
\begin{aligned}
\left( \Delta_{k}^{\left(r\right)}\omega | \omega \right) 
\geq k^{-1+2\varepsilon} \left(1 +mk^{-2\varepsilon}\right) \left\| \omega\right\|^{2}
\geq \frac{1}{2}k^{-1+2\varepsilon} \left\|\omega\right\|^{2}.
\end{aligned}
\end{equation*}
\end{proof}

Put $\mathcal{U} = \bigcup_{p\in \crit\left(f\right)} U_{p}$ (with $U_{p}$ identified as $B_{\frac{3}{2}}\left(0\right)$ under the coordinate chart $\varphi_{p}$ for each $p\in \crit\left(f\right)$), and let $\tau\in \mathcal{C}^{\infty}_{c}\left(\mathcal{U}\right)$ be a cut-off function such that $\tau=1$ in $\bigcup_{p\in \crit \left(f\right)}\varphi_{p}^{-1}\left(B_{\frac{1}{2}}\left(0\right)\right)$ and $\tau=0$ in $M\setminus \bigcup_{p
\in \crit \left(f\right)}\varphi_{p}^{-1}\left(B_{1}\left(0\right)\right)$. For (large) $k$, put $\tau_{k}\in \mathcal{C}^{\infty}_{c}\left(\mathcal{U}^{k}\right)$ such that
\begin{equation*}
\begin{aligned}
\tau_{k}\circ \varphi_{p}^{-1} \left(q\right) = \tau\left(k^{\frac{1}{2}-\varepsilon}q\right)
\end{aligned}
\end{equation*}
for each $q\in \varphi_{p}\left(U_{p}\right)\subset \mathbb{R}^{n}$ and for each $p\in \crit\left(f\right)$. Finally, we set
\begin{equation}\label{Defi: chi_k}
\begin{aligned}
\chi_{k} =1 - \tau_{k} \in \mathcal{C}^{\infty}_{c}\left(M\setminus  \bigcup_{p\in \crit\left(f\right)}\varphi_{p}^{-1}\left(B_{\frac{1}{2}k^{-\frac{1}{2}+\varepsilon}}\left(0\right)\right)\right)
\end{aligned}
\end{equation}
Note that $\chi_{k}=1$ in $M\setminus \mathcal{U}^{k}$. Moreover, we can see $D^{\alpha}\chi_{k}\in \mathcal{C}_{c}^{\infty}\left(\mathcal{U}^{k}\right)$ and
\begin{equation*}
\begin{aligned}
\sup_{\mathbb{R}^{n}} \left| D^{\alpha} \chi_{k}\right|
 \leq C\left(\tau\right)k^{-\varepsilon\left|\alpha\right|}k^{\frac{\left|\alpha\right|}{2}}
\end{aligned}
\end{equation*}
with respect to each of the coordinate charts $\left(U_{p},\varphi_{p}\right)$, for each multi-index $\alpha\neq 0$. 

Again, we have the following $L^{2}$-estimate via Lemma \ref{Lem: Bochner-type Estimate}.

\begin{theo}\label{Thm: L^2 esitmate on Delta^m chi e^t/k = O(k^-N) outside crit. pts}
Let $\chi_{k}$ be the cut-off function given by the cut-off function $\tau$ as in \eqref{Defi: chi_k}. If $k$ large, then for each $N\in \mathbb{N}$ and for each fixed $m\in \mathbb{N}$,
\begin{equation}\label{L^2 esitmate on Delta^m chi e^t/k = O(k^-N) outside crit. pts}
\begin{aligned}
\left\| \left( \Delta_{k}^{\left(r\right)}\right)^{m} \left( \chi_{k} e^{-\frac{t}{k}\Delta_{k}^{\left(r\right)}}\omega\right) \right\| \leq C\left(\tau, t,m,N\right) k^{m-N} \left\|\omega\right\|
\end{aligned}
\end{equation}
for each $\omega\in \Omega^{r}\left(M\right)$, where $C\left(\tau, t,m,N\right)$ depends on $\tau,m, N$ and smoothly on $t$ but is independent of $k$
\end{theo}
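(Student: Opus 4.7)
The approach is to follow the scheme used for Corollary \ref{Cor: L^2 estimate bdd by |x|^-N}, with Lemma \ref{Lem: Bochner-type Estimate} playing the role of Lemma \ref{Lem: Bochner type estimate (Pointwise)}. The first step is to note that the support of $\chi_{k}\eta$ lies in $M\setminus\bigcup_{p}\varphi_{p}^{-1}(B_{\frac{1}{2}k^{-1/2+\varepsilon}}(0))$ for every $\eta$, and that under the locally flat metric of Theorem \ref{Thm: Locally Flat Metric} one has $|df|^{2}=|x|^{2}\geq \tfrac{1}{4}k^{-1+2\varepsilon}$ on this slightly enlarged region. Consequently the proof of Lemma \ref{Lem: Bochner-type Estimate} goes through verbatim, with a marginally worse constant, and yields a Bochner-type lower bound $(\Delta_{k}^{(r)}\eta|\eta)\geq Ck^{1+2\varepsilon}\|\eta\|^{2}$ for every $\eta\in\Omega^{r}(M)$ with $\supp\eta\subset\supp\chi_{k}$.

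With this in hand, setting $\eta_{0}=e^{-\frac{t}{k}\Delta_{k}^{(r)}}\omega$, I mimic the two-case (parity of $N$) iterative argument of Corollary \ref{Cor: L^2 estimate bdd by |x|^-N}. Using $d_{k}^{2}=0=(d_{k}^{*})^{2}$ to split powers of $\Delta_{k}^{(r)}$ into $(d_{k}d_{k}^{*})^{s}$ and $(d_{k}^{*}d_{k})^{s}$ pieces, iterating the Bochner inequality $N$ times gains a factor of $k^{-N(1+2\varepsilon)}$ and reduces the estimate to controlling $L^{2}$ norms of $(d_{k}d_{k}^{*})^{N/2+m}(\chi_{k}\eta_{0})$ and $(d_{k}^{*}d_{k})^{N/2+m}(\chi_{k}\eta_{0})$. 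I then expand via the Leibniz rules \eqref{Eq: d_k chi_k eta} and \eqref{Eq: d_k^* chi_k eta}; each resulting term is a product of derivatives $D^{\alpha}\chi_{k}$, bounded in sup-norm by $C(\tau)k^{(1/2-\varepsilon)|\alpha|}$, with a power $(\Delta_{k}^{(r)})^{s}\eta_{0}$, bounded in $L^{2}$ by $C(t,s)k^{s}\|\omega\|$ via the spectral theorem applied to $\lambda\mapsto\lambda^{s}e^{-t\lambda/k}$. A direct bookkeeping gives a worst-case combined power of $k^{N+2m}$, and combining with the Bochner gain one arrives at
\begin{equation*}
\left\|(\Delta_{k}^{(r)})^{m}(\chi_{k}e^{-\frac{t}{k}\Delta_{k}^{(r)}}\omega)\right\|\leq C(\tau,t,m,N)\,k^{m-N\varepsilon}\|\omega\|.
\end{equation*}
Since $\varepsilon\in(0,1/2)$ is fixed and $N$ is arbitrary, relabeling $N\mapsto\lceil N/\varepsilon\rceil$ delivers the desired estimate with exponent $k^{m-N}$.

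The main obstacle will be the bookkeeping: each application of the Leibniz rule spawns many terms, and the derivatives of $\chi_{k}$ themselves carry \emph{positive} powers of $k$, so one must confirm that in every such term the Bochner gain dominates. The crucial mechanism is that each full Bochner iteration contributes $k^{-(1+2\varepsilon)}$ while the worst Leibniz-induced growth per step is at most $k$ (coming from a power of $\Delta_{k}^{(r)}$ landing on $\eta_{0}$ combined with the $k^{(1/2-\varepsilon)|\alpha|}$ from the $\chi_{k}$-derivatives), leaving a net gain of $k^{-2\varepsilon}$ per iteration. Because this net exponent is strictly negative, finitely many iterations yield any prescribed polynomial decay, which closes the argument.
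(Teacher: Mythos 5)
Your proposal is correct and follows essentially the same route as the paper's proof, which simply defers to the arguments of Theorem \ref{Thm: L^2 estimate bdd by |x|^-N w/o diff.} and Corollary \ref{Cor: L^2 estimate bdd by |x|^-N} with $\left|x\right|$ replaced by $k^{\varepsilon}$ and $\chi^{\left[1\right]},\chi^{\left[2\right]}$ replaced by $\tau$. You are in fact marginally more careful than the paper in observing that the Bochner estimate of Lemma \ref{Lem: Bochner-type Estimate} must be extended from $M\setminus\mathcal{U}^{k}$ to the slightly larger support of $\chi_{k}$, at the cost of a harmless constant.
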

\begin{proof}
Theorem \ref{Thm: L^2 esitmate on Delta^m chi e^t/k = O(k^-N) outside crit. pts} can be established by the arguments in Theorem \ref{Thm: L^2 estimate bdd by |x|^-N w/o diff.} and Corollary \ref{Cor: L^2 estimate bdd by |x|^-N} by replacing $\left|x\right|$ with $k^{\varepsilon}$, and $\chi^{\left[1\right]},\chi^{\left[2\right]}$ with $\tau$.
\end{proof}

\begin{theo}\label{Thm: Mapping propety outside criti pts 1}
For each $p\in \crit \left(f\right)$, let $\chi_{k}$ be the cut-off function given by the cut-off function $\tau$ as in \eqref{Defi: chi_k}.  If $k$ is sufficiently large, then for each $N\in \mathbb{N}$, for each $m\in \mathbb{N}\cup\left\{0\right\}$,
\begin{equation}\label{Ineq: Mapping propety outside criti pts 1}
\begin{aligned}
\left\| \chi_{k} e^{-\frac{t}{k}\Delta_{k}^{\left(r\right)}} \chi_{k} \omega\right\|_{2m} 
\leq C\left(\tau ,t,m, N\right) k^{-N}\left\| \omega \right\|_{-2m}
\end{aligned}
\end{equation}
for each $\omega\in \Omega^{r}_{c}\left(V\right)$, where $C\left(\tau ,t,m,N\right)>0$ depends on $\tau,m,N$ and smoothly on $t$ but is independent of $k$.
\end{theo}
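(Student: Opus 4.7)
The proof mirrors almost verbatim that of Theorem \ref{Thm: Mapping property outside criti pts 2}, with the ``good region'' cutoff $\chi_{k}$ (supported away from all critical points) replacing the annular cutoff $\chi_{x,k}$, and with Theorem \ref{Thm: L^2 esitmate on Delta^m chi e^t/k = O(k^-N) outside crit. pts} furnishing the $k^{-N}$-decay in place of the $|x|^{-N}$-decay of Corollary \ref{Cor: L^2 estimate bdd by |x|^-N}. The overall plan is: Gårding's inequality reduces the $2m$-Sobolev norm to $L^{2}$-norms of powers of $\Delta_{k}^{(r)}$ applied to the expression; duality recasts each such $L^{2}$-norm as a supremum of inner products; the self-adjointness of $\Delta_{k}^{(r)}$ and $e^{-\frac{t}{k}\Delta_{k}^{(r)}}$ transfers the operators onto a test form $\eta$; and a second application of Gårding together with Theorem \ref{Thm: L^2 esitmate on Delta^m chi e^t/k = O(k^-N) outside crit. pts} and the spectral theorem (Theorem \ref{Spectral Theorem}) then closes the loop.

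More concretely, we would first invoke Gårding's inequality to bound
\begin{equation*}
\|\chi_{k}e^{-\frac{t}{k}\Delta_{k}^{(r)}}\chi_{k}\omega\|_{2m} \leq C_{1}\|\widetilde{\chi}_{1}(\Delta_{k}^{(r)})^{m}(\chi_{k}e^{-\frac{t}{k}\Delta_{k}^{(r)}}\chi_{k}\omega)\|_{0} + C_{2}\|\widetilde{\chi}_{2}(\chi_{k}e^{-\frac{t}{k}\Delta_{k}^{(r)}}\chi_{k}\omega)\|_{0},
\end{equation*}
where $\widetilde{\chi}_{1},\widetilde{\chi}_{2}$ equal $1$ on $\supp \chi_{k}$. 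Expanding $(\Delta_{k}^{(r)})^{m}$ via the Leibniz rules \eqref{Eq: d_k chi_k eta} and \eqref{Eq: d_k^* chi_k eta}, each derivative that falls on $\chi_{k}$ costs at most $O(k^{\frac{1}{2}-\varepsilon})$ in view of the estimate on $|D^{\alpha}\chi_{k}|$, so the result is a polynomial-in-$k$ linear combination of terms of the form $\chi_{k}'(\Delta_{k}^{(r)})^{j'}e^{-\frac{t}{k}\Delta_{k}^{(r)}}\chi_{k}\omega$ for cutoffs $\chi_{k}'$ whose supports are contained in that of $\widetilde{\chi}_{1}$ and which again satisfy estimates of the same form as $\chi_{k}$.

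Next, we write each resulting $L^{2}$-norm as a supremum of inner products with $\eta$ satisfying $\|\eta\|_{0}\leq 1$, and use the self-adjointness of $\Delta_{k}^{(r)}$ and $e^{-\frac{t}{k}\Delta_{k}^{(r)}}$ to obtain
\begin{equation*}
\big|(\chi_{k}'(\Delta_{k}^{(r)})^{j'}e^{-\frac{t}{k}\Delta_{k}^{(r)}}\chi_{k}\omega\,|\,\eta)\big| = \big|(\omega\,|\,\chi_{k}e^{-\frac{t}{k}\Delta_{k}^{(r)}}(\Delta_{k}^{(r)})^{j'}\chi_{k}'\eta)\big| \leq \|\omega\|_{-2m}\|\chi_{k}e^{-\frac{t}{k}\Delta_{k}^{(r)}}(\Delta_{k}^{(r)})^{j'}\chi_{k}'\eta\|_{2m},
\end{equation*}
modulo commutators of $\chi_{k}$ with $\Delta_{k}^{(r)}$ that produce only the same type of polynomial-in-$k$ loss. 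A second Gårding estimate on the final factor, combined with Theorem \ref{Thm: L^2 esitmate on Delta^m chi e^t/k = O(k^-N) outside crit. pts} applied to the input $\chi_{k}'\eta$ and Theorem \ref{Spectral Theorem} used to absorb the remaining powers of $\Delta_{k}^{(r)}$ inside the semigroup, then yields an arbitrarily large negative power of $k$ times $\|\eta\|_{0}$.

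The main technical nuisance is bookkeeping: several sources contribute polynomial-in-$k$ losses, namely the $k^{2}|df|^{2}$ and $kA$ pieces of $\Delta_{k}^{(r)}$, the derivatives of $\chi_{k}$, and the $k^{j'}$ factors from the spectral bound $\|(\Delta_{k}^{(r)})^{j'}e^{-\frac{t}{k}\Delta_{k}^{(r)}}\|\leq C(t)k^{j'}$. These accumulate into a fixed power $k^{M(m,N)}$ that must be absorbed by Theorem \ref{Thm: L^2 esitmate on Delta^m chi e^t/k = O(k^-N) outside crit. pts}, which is possible precisely because its decay exponent is arbitrary. Choosing that exponent large enough yields the desired bound $\|\chi_{k}e^{-\frac{t}{k}\Delta_{k}^{(r)}}\chi_{k}\omega\|_{2m}\leq C(\tau,t,m,N)k^{-N}\|\omega\|_{-2m}$ for every prescribed $N\in\mathbb{N}$.
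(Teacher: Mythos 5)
Your proposal follows essentially the same strategy as the paper's proof: Gårding's inequality to reduce the $2m$-Sobolev norm to $L^2$-norms of $(\Delta_k^{(r)})^m$ applied to the expression, duality combined with self-adjointness to move operators onto a test form $\eta$, a second Gårding step, Theorem \ref{Thm: L^2 esitmate on Delta^m chi e^t/k = O(k^-N) outside crit. pts} for the $k^{-N}$ decay, and Theorem \ref{Spectral Theorem} for absorbing powers of $\Delta_k^{(r)}$ into the semigroup. The one point worth making explicit, which the paper does and you leave implicit, is that the Sobolev norm on $M$ is defined via the partition of unity $\mathcal{P}$, so the Gårding step is carried out chart-by-chart on $\|\psi\chi_k e^{-\frac{t}{k}\Delta_k^{(r)}}\chi_k\omega\|_{2m}$ for each $\psi\in\mathcal{P}$ before summing; you should also note that the Leibniz expansion you describe is already the content of Theorem \ref{Thm: L^2 esitmate on Delta^m chi e^t/k = O(k^-N) outside crit. pts}, so one can invoke it directly rather than redoing it.
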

\begin{proof}
The main idea of proving Theorem \ref{Thm: Mapping propety outside criti pts 1} is very similar to Theorem \ref{Critical Thm: mapping property of s.h.o.}. Choose a pair $\left(\mathcal{V},\mathcal{P},\mathcal{E}\right)$ so that the Sobolev norms are defined on $M$.

By definition,
\begin{equation}\label{Eq: Split for 2m-Sobo. norm of chi_k e^-t/k Delta}
\begin{aligned}
\left\| \chi_{k} e^{-\frac{t}{k}\Delta_{k}^{\left(r\right)}}\chi_{k} \omega\right\|_{2m}^{2} = \sum_{\psi \in \mathcal{P}} \left\| \psi  \chi_{k} e^{-\frac{t}{k}\Delta_{k}^{\left(r\right)}}\chi_{k} \omega\right\|^{2}_{2m}.
\end{aligned}
\end{equation}
By Gårding's inequality, we see that
\begin{equation*}
\begin{aligned}
&\left\| \psi \chi_{k} e^{-\frac{t}{k}\Delta_{k}^{\left(r\right)}} \chi_{k}\omega \right\|_{2m}\\
&\leq C_{1}\left(\psi\right) k^{m}  \left\| \tilde{\psi}_{1} \left(\Delta_{k}^{\left(r\right)}\right)^{m}\left( \chi_{k} e^{-\frac{t}{k}\Delta_{k}^{\left(r\right)}}\chi_{k} \omega\right) \right\|_{0} + C_{2}\left(\psi\right)k^{m} \left\|\tilde{\psi}_{2} \chi_{k} e^{-\frac{t}{k}\Delta_{k}^{\left(r\right)}} \chi_{k} \omega\right\|_{0},
\end{aligned}
\end{equation*}
where $\tilde{\psi}_{1},\tilde{\psi}_{2}$ are cut-off functions with compact support in the coordinate domain in which $\supp \psi$ lies and $\tilde{\psi}_{1}=1=\tilde{\psi}_{2}$ in $\supp \psi$. Moreover, by Theorem \ref{Thm: Mapping propety outside criti pts 1}, we obtain for each $N_{1}\in \mathbb{N}$,
\begin{equation}\label{Ineq: One piece of 2m-Sobo. norm}
\begin{aligned}
&\left\| \psi \chi_{k} e^{-\frac{t}{k}\Delta_{k}^{\left(r\right)}} \chi_{k}\omega \right\|_{2m}\\
&\leq \left( C_{3}\left(\psi, \tau, \frac{t}{2},m,N\right)k^{2m-N_{1}}+ C_{4}\left(\psi, \tau, \frac{t}{2},m,N\right)k^{m-N_{1}} \right) \left\| e^{-\frac{t}{2k}\Delta_{k}^{\left(r\right)}}\chi_{k}\omega\right\|_{L^{2}\left(M\right)}\\
&\leq C_{5}\left(\psi,\tau, t,m,N\right)k^{2m-N_{1}} \left\| e^{-\frac{t}{2k}\Delta_{k}^{\left(r\right)}}\chi_{k}\omega\right\|_{L^{2}\left(M\right)},
\end{aligned}
\end{equation}
where $C_{5}\left(\psi,\tau, t,m,N\right)$ depends on $\psi,\tau,m,N$ and smoothly on $t$ but is independent of $k$.

Now, since $\sum_{\psi\in \mathcal{P}} \psi^{2} =1$, for each $\eta\in \Omega^{r}\left(M\right)$, $\eta\neq 0$, we deduce
\begin{equation*}
\begin{aligned}
\frac{\left|\left( e^{-\frac{t}{k}\Delta_{k}^{\left(r\right)}}\chi_{k}\omega \bigg| \eta\right) \right|}{\left\|\eta\right\|_{L^{2}\left(M\right)}}
&=\frac{\left|\left( \omega \bigg| \chi_{k} e^{-\frac{t}{k}\Delta_{k}^{\left(r\right)}}\eta \right) \right|}{\left\|\eta\right\|_{L^{2}\left(M\right)}}
\leq \sum_{\psi\in \mathcal{P}} \frac{\left|\left( \psi\omega \bigg| \psi \chi_{k} e^{-\frac{t}{k}\Delta_{k}^{\left(r\right)}}\eta \right) \right|}{\left\|\eta\right\|_{L^{2}\left(M\right)}}\\
&\leq  \sum_{\psi\in \mathcal{P}} \frac{ \left\|\psi \omega\right\|_{-2m} \left\| \psi \chi_{k} e^{-\frac{t}{k}\Delta_{k}^{\left(r\right)}} \eta\right\|_{2m} }{\left\|\eta\right\|_{L^{2}\left(M\right)}}.
\end{aligned}
\end{equation*} 
Using Gårding's inequality again, we obtain
\begin{equation*}
\begin{aligned}
&\left\| \psi \chi_{k} e^{-\frac{t}{k}\Delta_{k}^{\left(r\right)}}\eta\right\|_{2m}\\
&\leq C_{6}\left(\psi\right) k^{m} \left\| \tilde{\psi}_{3}\left(\Delta_{k}^{\left(r\right)}\right)^{m} \chi_{k}  e^{-\frac{t}{k}\Delta_{k}^{\left(r\right)}} \eta\right\|_{0} 
+ C_{7}\left(\psi\right) k^{m} \left\| \tilde{\psi}_{4} \chi_{k} e^{-\frac{t}{k}\Delta_{k}^{\left(r\right)}} \eta\right\|_{0},
\end{aligned}
\end{equation*}
where $\tilde{\psi}_{3},\tilde{\psi}_{4}$ are cut-off functions with compact support in the coordinate domain in which $\supp \psi$ lies and $\tilde{\psi}_{3}=1=\tilde{\psi}_{4}$ in $\supp \psi$. By Theorem \ref{Thm: Mapping propety outside criti pts 1}, we obtain for each $N_{2}\in \mathbb{N}$, 
\begin{equation*}
\begin{aligned}
\left\| \psi \chi_{k} e^{-\frac{t}{k}\Delta_{k}^{\left(r\right)}}\eta\right\|_{2m}
\leq C_{8}\left(\psi,\tau,t,m,N\right)k^{2m-N_{2}} \left\|\eta\right\|_{L^{2}\left(M\right)},
\end{aligned}
\end{equation*}
$C_{8}\left(\psi,\tau, t,m,N\right)$ depends on $\psi,\tau,m,N$ and smoothly on $t$ but is independent of $k$. Hence, we conclude
\begin{equation}\label{Ineq: L^2 norm estimate for e^-t/k Delta chi_k}
\begin{aligned}
\left\| e^{-\frac{t}{k}\Delta_{k}^{\left(r\right)}} \chi_{k}\omega\right\|_{L^{2}\left(M\right)} 
&\leq C_{9}\left(\tau,t,m,N\right)k^{2m-N_{2}} \sum_{\psi \in \mathcal{P}} \left\|\psi\omega\right\|_{-2m}\\
&\leq C_{10}\left(\tau,t,m,N\right)k^{2m-N_{2}}  \left( \sum_{\psi\in \mathcal{P}} \left\|\psi \omega\right\|_{-2m}^{2}\right)^{\frac{1}{2}}\\
&= C_{10}\left(\tau,t,m,N\right)k^{2m-N_{2}}   \left\| \omega\right\|_{-2m}.
\end{aligned}
\end{equation}
where $C_{10}\left(\tau, t,m,N\right)$ depends on $\tau,m,N$ and smoothly on $t$ but is independent of $\psi, k$. 

Hence, by \eqref{Eq: Split for 2m-Sobo. norm of chi_k e^-t/k Delta}, \eqref{Ineq: One piece of 2m-Sobo. norm}, and \eqref{Ineq: L^2 norm estimate for e^-t/k Delta chi_k}, we deduce for each $N_{1},N_{2}\in \mathbb{N}$,
\begin{equation*}
\begin{aligned}
\left\| \chi_{k} e^{-\frac{t}{k}\Delta_{k}^{\left(r\right)}}\chi_{k} \omega\right\|_{2m}
= C_{11}\left(\tau,t,m,N\right) k^{4m-N_{1}-N_{2}} \left\| \omega\right\|_{-2m},
\end{aligned}
\end{equation*}
where $C_{11}\left(\tau,t,m,N\right)$ depends on $\tau,m,N$ and smoothly on $t$ but is independent of $\psi, k$. Finally, take $N_{1},N_{2}$ large enough, we can conclude for each $N\in \mathbb{N}$,
\begin{equation*}
\begin{aligned}
\left\| \chi_{k} e^{-\frac{t}{k}\Delta_{k}^{\left(r\right)}}\chi_{k} \omega\right\|_{2m}
= C_{12}\left(\tau,t,m,N\right) k^{-N} \left\| \omega\right\|_{-2m}
\end{aligned}
\end{equation*}
as desired.
\end{proof}

Now, we can start to prove Theorem \ref{Main Thm: Asymptotic Behavior of Heat Kernel outside Critical Points}.

\begin{proof}[Proof of Theorem \ref{Main Thm: Asymptotic Behavior of Heat Kernel outside Critical Points}]

The argument of Theorem \ref{Main Thm: Asymptotic Behavior of Heat Kernel outside Critical Points} is very similar to Theorem \ref{Main Thm: Asymptotic Behavior of Scaled Heat Kernel in between}.

Choose a pair $\left(\mathcal{V},\mathcal{P}, \mathcal{E}\right)$ such that the sup-norm $\left\|\cdot\right\|_{\mathcal{C}^{0}}$ and the Sobolev norms are defined. 

For each large $k>0$, let $x\in M\setminus \mathcal{U}^{k}$. Put $\omega_{x,I,\delta} = \chi_{x,I,\delta} E^{I}$ and $\omega_{x,J,\delta'}=\chi_{x,J,\delta'} E^{J}$ with $\chi_{x,I,\delta},\chi_{y,J,\delta'}$ as in \eqref{Defi: approximate identity} and $\left\{E^{I}\right\}_{I}\subset \mathcal{E}$ is a local orthonormal frame.  By integration by part, we see that
\begin{equation*}
\begin{aligned}
\left| e^{-\frac{t}{k}\Delta_{k}^{\left(r\right)}}\phantom{}_{I,J}\left(t,x,x\right)\right|
=\lim_{\delta\to 0}\lim_{\delta'\to 0} \left| \left( e^{-\frac{t}{k}\Delta_{k}^{\left(r\right)}}\omega_{x,J,\delta'} \bigg| \omega_{x,I,\delta} \right) \right|.
\end{aligned}
\end{equation*}
Let $\chi_{k}$ be the cut-off function as in \eqref{Defi: chi_k}. Fix a large $m$, and we obtain
\begin{equation*}
\begin{aligned}
 \left| \left( e^{-\frac{t}{k}\Delta_{k}^{\left(r\right)}}\omega_{x,J,\delta'} \bigg| \omega_{x,I,\delta} \right) \right|
& \leq \sum_{\psi\in \mathcal{P}} \left| \left( \psi \chi_{k} e^{-\frac{t}{k}\Delta_{k}^{\left(r\right)}}\chi_{k} \omega_{x,J,\delta'} \bigg| \psi \omega_{x,I,\delta} \right) \right|\\
&\leq \sum_{\psi \in \mathcal{P}}  \left\| \psi \chi_{k} e^{-\frac{t}{k}\Delta_{k}^{\left(r\right)}}\chi_{k} \omega_{x,J,\delta'} \right\|_{2m} \left\|\psi \omega_{x,I,\delta} \right\|_{-2m}.
\end{aligned}
\end{equation*}
Then in view of Lemma \ref{Lem: subseqential convergence of seq. of norms of a.i.}, we can deduce $\left\| \psi \omega_{x,I,\delta}\right\|_{-2m}\leq C_{1}\left(\psi\right)$; moreover, by definition, $ \left\| \psi \chi_{k} e^{-\frac{t}{k}\Delta_{k}^{\left(r\right)}}\chi_{k} \omega_{x,J,\delta'} \right\|_{2m}\leq \left\| \chi_{k}e^{-\frac{t}{k}\Delta_{k}^{\left(r\right)}}\chi_{k}\omega_{x,J,\delta'}\right\|_{2m}$. Hence, by Theorem \ref{Thm: Mapping propety outside criti pts 1}, we conclude for each $N\in \mathbb{N}$,
\begin{equation*}
\begin{aligned}
 \left| \left( e^{-\frac{t}{k}\Delta_{k}^{\left(r\right)}}\omega_{x,J,\delta'} \bigg| \omega_{x,I,\delta} \right) \right|
 \leq C_{2} \left\| \chi_{k}e^{-\frac{t}{k}\Delta_{k}^{\left(r\right)}}\chi_{k}\omega_{x,J,\delta'}\right\|_{2m}
\leq C_{3}\left(\tau,t,N\right) k^{-N},
\end{aligned}
\end{equation*}
where $C_{3}\left(\tau,t,N\right)$ depends on $\tau,N$ and smoothly on $t$ but is independent of $x, k,\delta,\delta'$ (Dependence of $m$ is again redundant). In addition, note that the cut-off function $\chi_{k}$ is constructed so that $\chi_{k}=1$ in $M\setminus \mathcal{U}^{k}$, which indicates $e^{-\frac{t}{k}\Delta_{k}^{\left(r\right)}}\phantom{}_{I,J}\left(t,x,x\right)$ shares the same upper bound for each $x\in M\setminus \mathcal{U}^{k}$. Therefore, we obtain for each $t\in T$, for each $x\in M\setminus \mathcal{U}^{k}$, and for each $N\in \mathbb{N}$, 
\begin{equation*}
\begin{aligned}
\left| e^{-\frac{t}{k}\Delta_{k}^{\left(r\right)}}\phantom{}_{I,J}\left(t,x,x\right)\right|\leq C_{4}\left(t,N\right)k^{-N},
\end{aligned}
\end{equation*}
where $C_{4}\left(t,N\right)$ depends on $N$ and smoothly on $t$ and is independent of $x,k$.

Finally, for each $t>0$ and for each $N\in \mathbb{N}$, we deduce
\begin{equation*}
\begin{aligned}
\left\| e^{-\frac{t}{k}\Delta_{k}^{\left(r\right)}}\left(t,x,x\right)\right\|_{\mathcal{C}^{0}\left(M\setminus \mathcal{U}^{k}\right)}^{2}
&=\sum_{\psi\in \mathcal{P}} \left\| \psi e^{-\frac{t}{k}\Delta_{k}^{\left(r\right)}}\left(t,x,x\right) \right\|_{\mathcal{C}^{0}\left(M\setminus \mathcal{U}^{k}\right)}^{2}\\
&=\sum_{\psi\in \mathcal{P}}  \left(\sup_{\supp \psi \cap M\setminus \mathcal{U}^{k}}  \left( \sum_{I,J} \left| \psi e^{-\frac{t}{k}\Delta_{k}^{\left(r\right)}}\phantom{}_{I,J}\left(t,x,x\right)  \right|^{2} \right)^{\frac{1}{2}} \right)^{2} \\
&\leq C_{5}\left(t,N\right)k^{-N},
\end{aligned}
\end{equation*}
where $C_{5}\left(t,N\right)$ depends on $N$ and smoothly on $t$. This furnishes Theorem \ref{Main Thm: Asymptotic Behavior of Heat Kernel outside Critical Points}.
\end{proof}

\section{Morse Inequalities}\label{M.I.}

In this section, we give a new analytic proof of the Morse inequalities as an application of our heat kernel results: Theorems \ref{Main Thm: Semi-classical Heat Kernel Asymptotics}, \ref{Main Thm: Asymptotic Behavior of Scaled Heat Kernel in between}, and \ref{Main Thm: Asymptotic Behavior of Heat Kernel outside Critical Points}.

First, we review the Morse inequalities:
\begin{theo}[The Morse Inequalities] \label{Main Theorem: Morse Inequalities} Let $M$ be a compact orientable smooth manifold of dimension $n$ and let $f$ be a Morse function. Then
\begin{enumerate}
\item[(a)] for each $0\leq r\leq n$,
\begin{equation}\label{Weak Morse Ineq}
\begin{aligned}
\dim H_{dR}^{r}\left(M\right) \leq m_{r};
\end{aligned}
\end{equation}
\item[(b)] for each $0\leq r\leq n$,
\begin{equation}\label{Strong Morse Ineq 1}
\begin{aligned}
\sum_{j=0}^{r} \left(-1\right)^{r-j} \dim H_{dR}^{j}\left(M\right)
\leq \sum_{j=0}^{r} \left(-1\right)^{r-j} m_{j}
\end{aligned}
\end{equation}
and the equality holds if $r=n$; namely,
\begin{equation}\label{Strong Morse Ineq 2}
\begin{aligned}
\sum_{j=0}^{n} \left(-1\right)^{n-j} \dim H_{dR}^{j}\left(M\right)
= \sum_{j=0}^{n} \left(-1\right)^{n-j} m_{j}.
\end{aligned}
\end{equation}
\end{enumerate}
\end{theo}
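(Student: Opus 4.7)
The plan is to derive both parts of Theorem \ref{Main Theorem: Morse Inequalities} from the trace-integral identity for the Witten heat kernel together with the limit formula
\begin{equation*}
\lim_{t\to \infty} \lim_{k\to \infty} \int_{M} \tr e^{-\frac{t}{k}\Delta_{k}^{\left(r\right)}}\left(x,x\right) dV = m_{r},
\end{equation*}
which is the substantive analytic content. By the spectral decomposition \eqref{Eq: Heat Kernel in terms of eigenforms},
\begin{equation*}
\int_{M} \tr e^{-\frac{t}{k}\Delta_{k}^{\left(r\right)}}\left(x,x\right) dV = \sum_{\lambda \in \spe \Delta_{k}^{\left(r\right)}} e^{-t\lambda/k} \dim E_{\lambda,k}^{\left(r\right)}\left(M\right).
\end{equation*}
The $\lambda=0$ contribution equals $\dim \ker \Delta_{k}^{\left(r\right)}$, which by the Hodge-type decomposition for the deformed complex and Proposition \ref{Prop: Isomorphism between two coho. gps} coincides with $\dim H_{dR}^{r}\left(M\right)$.

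Since every term in the spectral sum is non-negative, discarding the positive eigenvalues yields $\int_{M} \tr e^{-\frac{t}{k}\Delta_{k}^{\left(r\right)}}\left(x,x\right) dV \geq \dim H_{dR}^{r}\left(M\right)$ for all $k$ and $t>0$; passing to the double limit gives the weak Morse inequalities \eqref{Weak Morse Ineq}. For the strong inequalities I would combine the spectral sum with the alternating-sum positivity of Proposition \ref{Prop: Alternative sum related to eigenspaces}: for every $\mu\in \spe \Delta_k^{(r)}\setminus \{0\}$, $\sum_{j=0}^{r}(-1)^{r-j}\dim E_{\mu,k}^{(j)}(M)\geq 0$, with equality when $r=n$. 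Weighting this by $e^{-t\mu/k}$ and summing, together with the $\lambda=0$ contribution identified above, yields
\begin{equation*}
\sum_{j=0}^{r}\left(-1\right)^{r-j}\int_{M}\tr e^{-\frac{t}{k}\Delta_{k}^{\left(j\right)}}\left(x,x\right) dV \geq \sum_{j=0}^{r} \left(-1\right)^{r-j}\dim H_{dR}^{j}\left(M\right),
\end{equation*}
with equality at $r=n$. Taking the double limit then delivers \eqref{Strong Morse Ineq 1} and \eqref{Strong Morse Ineq 2}.

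The main obstacle is the limit formula for $m_r$, and this I would attack by a two-step reduction in which Theorems \ref{Main Thm: Semi-classical Heat Kernel Asymptotics}, \ref{Main Thm: Asymptotic Behavior of Scaled Heat Kernel in between} and \ref{Main Thm: Asymptotic Behavior of Heat Kernel outside Critical Points} each play a role. For fixed $t>0$, one decomposes $M = \mathcal{U}^{k} \sqcup (M\setminus \mathcal{U}^{k})$. On the exterior, Theorem \ref{Main Thm: Asymptotic Behavior of Heat Kernel outside Critical Points} furnishes an $O(k^{-N})$ bound, whose integral is negligible. On each chart $U_{p}^{k}$, the change of variables $x=y/\sqrt{k}$ recasts the piece as $\int_{B_{k^{\varepsilon}}\left(0\right)} \tr A_{\left(k\right),p}^{r}\left(t,y,y\right) dy$. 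I would split this region again at radius $2D$: on the core $B_{2D}(0)$, Theorem \ref{Main Thm: Semi-classical Heat Kernel Asymptotics} gives $C^{\infty}$-convergence of $A_{(k),p}^{r}(t,y,y)$ to $e^{-t\Delta_{f,p}^{(r)}}(y,y)$, while on the annulus $B_{k^{\varepsilon}}(0)\setminus B_{2D}(0)$, Theorem \ref{Main Thm: Asymptotic Behavior of Scaled Heat Kernel in between} provides an integrable $|y|^{-N}$ majorant. Dominated convergence then yields
\begin{equation*}
\lim_{k\to \infty} \int_{M} \tr e^{-\frac{t}{k}\Delta_{k}^{\left(r\right)}}\left(x,x\right) dV = \sum_{p\in \crit\left(f\right)} \int_{\mathbb{R}^{n}} \tr e^{-t\Delta_{f,p}^{\left(r\right)}}\left(y,y\right) dy.
\end{equation*}

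The remaining step is to evaluate the $t\to \infty$ limit of the model trace, which is exactly the role anticipated for Mehler's formula after Theorem \ref{Main Thm: Semi-classical Heat Kernel Asymptotics}. From the explicit form \eqref{Eq: Model Laplacian}, $\Delta_{f,p}^{(r)}$ acts diagonally on components $\omega\, dx^{I}$ as the harmonic oscillator $-\sum\partial_{i}^{2}+(x^{i})^{2}$ shifted by the constant $\sum_{i}\varepsilon_{i}\varepsilon_{i}^{I}$; the ground-state energy $n$ of the oscillator is cancelled precisely when $I = \{1,\dots,\ind_{f} p\}$, and in that case the unique (up to scalar) null form on $L^{2}$ is $e^{-|x|^{2}/2}\, dx^{1}\wedge\cdots\wedge dx^{\ind_{f} p}$. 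Consequently the $t\to \infty$ limit of the model trace equals $1$ when $\ind_{f} p = r$ and $0$ otherwise, so the sum over critical points is exactly $m_{r}$. The main delicate point of this last step is to justify uniform integrability on $\mathbb{R}^{n}$ as $t\to \infty$, for which Mehler's explicit kernel plus the Gaussian decay of its diagonal provides ample control.
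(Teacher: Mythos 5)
Your proposal is correct and follows essentially the same route as the paper: the spectral-decomposition argument you give for the trace-integral bounds is exactly the paper's McKean--Singer type Lemma \ref{Thm: local index theory for witten laplacian}, the three-region decomposition (core ball $B_{2D}$ handled by Theorem \ref{Main Thm: Semi-classical Heat Kernel Asymptotics}, annulus up to radius $k^{\varepsilon}$ handled by the $|y|^{-N}$ majorant from Theorem \ref{Main Thm: Asymptotic Behavior of Scaled Heat Kernel in between}, exterior handled by Theorem \ref{Main Thm: Asymptotic Behavior of Heat Kernel outside Critical Points}) is precisely the paper's proof of Theorem \ref{Main Thm: Critical point info captured by the asymptotic behaviors}, and the Mehler-formula evaluation of the model trace is the content of Theorem \ref{Thm: Trace integral for perturbed harmonic oscillator}. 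The only cosmetic difference is that you identify the $t\to\infty$ model limit via the ground-state argument (the shift $\sum_i\varepsilon_i\varepsilon_i^{I}=-n$ cancelling the oscillator energy exactly when $I=\{1,\dots,\ind_f p\}$), whereas the paper computes the one-dimensional factors $\lim_{t\to\infty}\int_{\mathbb{R}}e^{-tL_i^{\pm}}(x,x)\,dx$ directly; the two are equivalent.
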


To prove Theorem \ref{Main Theorem: Morse Inequalities}, let us recall the local index theory.  Let $X$ be an inner product space and let $\left\{E_{I}\right\}_{I}$ be an orthonormal basis for $X$. Recall that a trace of a linear transformation $A:X\to X$ is given by
\begin{equation*}
\begin{aligned}
\tr A = \sum_{I} \langle AE_{I} | E_{I} \rangle. 
\end{aligned}
\end{equation*}
Then we can derive the following McKean-Singer type trace integral formula:
\begin{lem}[McKean-Singer Type Trace Integral Formula, cf. \cite{MS67}] \label{Thm: local index theory for witten laplacian} Let $M$ be a compact orientable Riemannian manifold of dimension $n$. Then for each $t>0$ and for each $k>0$, we have
\begin{enumerate}
\item[(a)] for each $r$,
\begin{equation*}\label{Weak local index theory (Witten)}
\begin{aligned}
\dim H_{dR}^{r}\left(M\right)\leq \int_{M} \tr e^{-\frac{t}{k}\Delta_{k}^{\left(r\right)}}\left(x,x\right) \ d V;
\end{aligned}
\end{equation*}
\item[(b)] for each $r$,
\begin{equation*}\label{Strong local index theory 1 (Witten)}
\begin{aligned}
\sum_{j=0}^{r} \left(-1\right)^{r-j}\dim H_{dR}^{j}\left(M\right)\leq \sum_{j=0}^{r}\left(-1\right)^{r-j} \int_{M} \tr  e^{-\frac{t}{k}\Delta_{k}^{\left(j\right)}}\left(x,x\right)  \ d V,
\end{aligned}
\end{equation*}
and the equality holds if $r=n$; namely,
\begin{equation*}\label{Strong local index theory 2 (Witten)}
\begin{aligned}
\sum_{j=0}^{n} \left(-1\right)^{n-j}\dim H_{dR}^{j}\left(M\right)= \sum_{j=0}^{n}\left(-1\right)^{n-j} \int_{M} \tr  e^{-\frac{t}{k}\Delta_{k}^{\left(j\right)}}\left(x,x\right) \ d V.
\end{aligned}
\end{equation*}
\end{enumerate}
\end{lem}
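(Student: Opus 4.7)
The plan is to combine the spectral expansion of the heat kernel with the Hodge-type identification $\ker \Delta_{k}^{(r)} \cong H_{dR}^{r}(M)$ and Proposition \ref{Prop: Alternative sum related to eigenspaces} on the alternating sum of eigenspace dimensions. The starting point is the eigenform expansion \eqref{Eq: Heat Kernel in terms of eigenforms}: taking fiberwise trace, choosing a local orthonormal frame, and integrating over $M$ against $dV$, orthonormality of $\{\varphi_{i}^{\lambda}\}$ collapses the double series to
\begin{equation*}
\int_{M} \tr e^{-\frac{t}{k}\Delta_{k}^{(r)}}(x,x) \, dV = \sum_{\lambda \in \spe \Delta_{k}^{(r)}} \dim E_{\lambda,k}^{(r)}(M) \, e^{-\frac{t}{k}\lambda}.
\end{equation*}
The termwise interchange is legitimate because the series converges uniformly on compact subsets of $\mathbb{R}^{+}\times M \times M$ (noted after \eqref{Eq: Heat Kernel in terms of eigenforms}), and on a compact manifold each eigenspace is finite-dimensional.

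Next I would identify the zero-eigenspace with the $r$-th de Rham cohomology. The standard Hodge argument for the deformed complex $(\Omega^{\bullet}(M), d_{k})$ shows that $\Delta_{k}^{(r)}\omega = 0$ if and only if $d_{k}\omega = 0$ and $d_{k}^{*}\omega = 0$; the usual orthogonal decomposition gives a unique harmonic representative in each $d_{k}$-cohomology class, so $E_{0,k}^{(r)}(M) \cong H_{k}^{r}(M)$. Combining with Proposition \ref{Prop: Isomorphism between two coho. gps} yields $\dim E_{0,k}^{(r)}(M) = \dim H_{dR}^{r}(M)$ for every $k>0$.

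For part (a), I isolate the $\lambda = 0$ term in the trace identity and drop the (non-negative) $\lambda > 0$ contributions, giving $\int_{M} \tr e^{-\frac{t}{k}\Delta_{k}^{(r)}}(x,x)\, dV \geq \dim H_{dR}^{r}(M)$. For part (b) I form the alternating sum and switch the orders of summation:
\begin{equation*}
\sum_{j=0}^{r}(-1)^{r-j} \int_{M} \tr e^{-\frac{t}{k}\Delta_{k}^{(j)}}(x,x)\, dV = \sum_{j=0}^{r}(-1)^{r-j}\dim H_{dR}^{j}(M) + \sum_{\lambda > 0} e^{-\frac{t}{k}\lambda} \sum_{j=0}^{r}(-1)^{r-j}\dim E_{\lambda,k}^{(j)}(M).
\end{equation*}
Proposition \ref{Prop: Alternative sum related to eigenspaces} says the inner sum on the right is non-negative for every $\lambda > 0$, which furnishes the inequality; when $r = n$ the same proposition guarantees that this inner sum vanishes identically, so only the cohomological term survives and the equality follows.

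The only real point that requires care is the Hodge decomposition in the deformed setting: I would record it explicitly by observing that $d_{k}$ and $d_{k}^{*}$ generate the same elliptic Laplacian $\Delta_{k}^{(r)}$, so the classical orthogonal splitting $\Omega^{r}(M) = \ker \Delta_{k}^{(r)} \oplus \im d_{k} \oplus \im d_{k}^{*}$ and the bijection between $\ker \Delta_{k}^{(r)}$ and $H_{k}^{r}(M)$ go through verbatim. Everything else is a bookkeeping assembly of Proposition \ref{Prop: Alternative sum related to eigenspaces}, Proposition \ref{Prop: Isomorphism between two coho. gps}, and the eigenform expansion.
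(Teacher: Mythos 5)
Your proposal follows the paper's argument step for step: expand the heat kernel via \eqref{Eq: Heat Kernel in terms of eigenforms}, integrate to collapse to $\dim H_{k}^{r}(M)+\sum_{\lambda>0}e^{-\frac{t}{k}\lambda}\dim E_{\lambda,k}^{(r)}(M)$, invoke Proposition \ref{Prop: Isomorphism between two coho. gps} for the cohomology identification, and apply Proposition \ref{Prop: Alternative sum related to eigenspaces} for the alternating sums. The only addition is that you spell out the Hodge-theoretic identification $E_{0,k}^{(r)}(M)\cong H_{k}^{r}(M)$, which the paper uses implicitly; this is a correct and welcome clarification but not a different route.
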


\begin{proof}

We begin by noting that, from \eqref{Eq: Heat Kernel in terms of eigenforms},
\begin{equation*}
\begin{aligned}
\tr  e^{-\frac{t}{k}\Delta_{k}^{\left(r\right)}}\left(x,x\right)
=\sum_{\lambda\in \spe \Delta_{k}^{\left(r\right)}} \sum_{i=1}^{d_{\lambda}}
e^{-\frac{t}{k}\lambda} \tr \varphi_{i}^{\lambda}\left(x\right)\otimes \left(\varphi_{i}^{\lambda}\right)^{*}\left(x\right)
=\sum_{\lambda\in \spe \Delta_{k}^{\left(r\right)}} \sum_{i=1}^{d_{\lambda}} e^{-\frac{t}{k}\lambda} \left| \varphi_{i}^{\lambda}\left(x\right)\right|^{2},
\end{aligned}
\end{equation*}
where $d_{\lambda}=\dim E_{\lambda,k}^{\left(r\right)}\left(M\right)$.

For each $r$, observe that
\begin{equation*}
\begin{aligned}
Z^{r} 
=\int_{M} \tr  e^{-\frac{t}{k}\Delta_{k}^{\left(r\right)}}\left(x,x\right) \ d V 
=\dim H_{k}^{r}\left(M\right)+\sum_{\lambda \in \spe \Delta_{k}^{\left(r\right)}\setminus \left\{0\right\}} e^{-\frac{t}{k}\lambda}\dim E_{\lambda,k}^{\left(r\right)}\left(M\right),
\end{aligned}
\end{equation*}
which follows from the fact that the order of integral and infinite sum can interchange since the series representation \eqref{Eq: Heat Kernel in terms of eigenforms} converges uniformly on compact subsets. Hence, we conclude
\begin{equation*}
\begin{aligned}
\dim H_{k}^{r}\left(M\right)\leq Z^{r}.
\end{aligned}
\end{equation*}
This proves (a).

To see (b), notice that
\begin{equation*}
\begin{aligned}
\sum_{j=0}^{r} \left(-1\right)^{r-j} Z^{j} 
&= \sum_{j=0}^{r} \left(-1\right)^{r-j} \dim H_{k}^{j}\left(M\right)+\sum_{j=0}^{r} \left(-1\right)^{r-j} \sum_{\lambda^{\left(j\right)}\in \spe \Delta_{k}^{\left(j\right)}\setminus \left\{0\right\}}e^{-\frac{t}{k}\lambda^{\left(j\right)}}\dim E_{\lambda^{\left(j\right)},k}^{\left(j\right)}\left(M\right)\\
&=\sum_{j=0}^{r} \left(-1\right)^{r-j} \dim H_{k}^{j}\left(M\right)+ \sum_{\lambda\in \mathbb{R}^{+}} e^{-\frac{t}{k}\lambda} \sum_{j=0}^{r} \left(-1\right)^{r-j} \dim E_{\lambda,k}^{\left(j\right)}\left(M\right),
\end{aligned}
\end{equation*}
where we interpret $E_{\lambda,k}^{\left(j\right)}\left(M\right)=\left\{0\right\}$ if $\lambda$ is not an eigenvalue of $\Delta_{k}^{\left(j\right)}$. Finally, by Proposition \ref{Prop: Alternative sum related to eigenspaces} and Proposition \ref{Prop: Isomorphism between two coho. gps}, we have established (b).

\end{proof}

Thanks to Lemma \ref{Thm: local index theory for witten laplacian}, proving Theorem \ref{Main Theorem: Morse Inequalities} boils down to investigating the trace integral of the heat kernel $e^{-\frac{t}{k}\Delta_{k}^{\left(r\right)}}\left(x,y\right)$, from which our main results (Theorems \ref{Main Thm: Semi-classical Heat Kernel Asymptotics}, \ref{Main Thm: Asymptotic Behavior of Scaled Heat Kernel in between}, \ref{Main Thm: Asymptotic Behavior of Heat Kernel outside Critical Points}) come in handy.

\subsection{Model Kernels}\label{m.k.}

To deal with the trace integral in question, it is important to know of the heat kernel $e^{-t\Delta_{f,p}^{\left(r\right)}}\left(x,y\right)$ with respect to $\Delta_{f,p}^{\left(r\right)}$ that we call the model kernel in this paper. In this subsection, we give the explicit expression for the trace of this model kernel by the Mehler's formula (see Theorem \ref{Thm: Mahler's formula}). With that in mind, we will be able to see that the trace integral of $e^{-t\Delta_{f,p}^{\left(r\right)}}\left(x,y\right)$ can be considered as an indicator of critical points of index $r$ (see Theorem \ref{Thm: Trace integral for perturbed harmonic oscillator}).

First, we review some facts about the usual harmonic operators. Let $L$ be the harmonic oscillator given by
\begin{equation*}
\begin{aligned}
L = -\frac{d^{2}}{dx^{2}} + x^{2}
\end{aligned}
\end{equation*}
on $\dom L:= \left\{ f\in L^{2}\left(\mathbb{R}\right): Lf\in L^{2}\left(\mathbb{R}\right) \right\}$. It is well-known that the eigenfunctions of $L$ are given by  for each $N\in \mathbb{N}\cup \left\{0\right\}$,
\begin{equation*}
\begin{aligned}
\Phi_{N}\left(x\right) = \frac{H_{N}\left(x\right)e^{-\frac{x^{2}}{2}}}{\pi^{\frac{1}{4}}\sqrt{2^{N}N!}},
\end{aligned}
\end{equation*}
where
\begin{equation*}
\begin{aligned}
H_{N}\left(x\right) = \left(-1\right)^{N}e^{x^{2}}\frac{d^{N}}{d x^{N}}e^{-x^{2}},
\end{aligned}
\end{equation*}
and with respect to which the eigenvalue is $2N+1$. 

Also, we have the following well-known Mehler's formula:
\begin{theo}[Mehler's formula]\label{Thm: Mahler's formula}
 For each $\rho\in \left[0,1\right)$ and for $x,y \in \mathbb{R}$, we have
\begin{equation}\label{Eq: Mehler's formula}
\begin{aligned}
\sum_{n\geq 0} \frac{\rho^{n}}{2^{n} n!}H_{n}\left(x\right)H_{n}\left(y\right)e^{-\frac{x^{2}+y^{2}}{2}} 
= \frac{1}{\sqrt{1-\rho^{2}}}\exp\left(\frac{4xy\rho - \left(1+\rho^{2}\right)\left(x^{2}+y^{2}\right)}{2\left(1-\rho^{2}\right)}\right).
\end{aligned}
\end{equation}
\end{theo}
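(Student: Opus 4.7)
The plan is to establish Mehler's formula by reducing the bilinear series in Hermite polynomials to a single one-dimensional Gaussian integral via the generating function, which can then be evaluated explicitly by completing the square.

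First, I would recall the generating function
\[
\sum_{n\geq 0} \frac{H_n(x)\, t^n}{n!} = e^{2xt - t^2},
\]
which follows from Taylor-expanding Rodrigues' formula $H_n(x) = (-1)^n e^{x^2} \partial_x^n e^{-x^2}$, together with the Fourier-type integral representation
\[
H_n(y) = \frac{2^n}{\sqrt{\pi}} \int_{-\infty}^{\infty} (y + is)^n e^{-s^2}\, ds,
\]
which is obtained by substituting $e^{-y^2} = \pi^{-1/2} \int e^{-s^2+2iys}\, ds$ into Rodrigues' formula and differentiating under the integral.

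Substituting this integral representation of $H_n(y)$ into the left-hand side of \eqref{Eq: Mehler's formula} (momentarily suppressing the prefactor $e^{-(x^2+y^2)/2}$) and interchanging sum and integral (legitimate since $|\rho| < 1$ provides geometric decay that dominates the polynomial growth of $(y+is)^n H_n(x)$ on compact sets), I would collapse the series by the generating function with $t = \rho(y+is)$:
\[
\sum_{n\geq 0} \frac{\rho^n}{2^n\, n!}\, H_n(x)\, H_n(y) = \frac{1}{\sqrt{\pi}} \int_{-\infty}^{\infty} \exp\bigl(-s^2 + 2x\rho(y+is) - \rho^2(y+is)^2\bigr)\, ds.
\]
The exponent is quadratic in $s$ with $s^2$-coefficient $-(1-\rho^2)$ and linear term $2i\rho(x-\rho y)\, s$. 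Completing the square in $s$ and evaluating the resulting Gaussian integral by a contour shift produces a factor $(1-\rho^2)^{-1/2}$, while the constant-in-$s$ contribution combines with $2x\rho y - \rho^2 y^2$ to yield, after cancellation of the $\rho^3$ and $\rho^4$ cross-terms, the exponent $\bigl(2\rho xy - \rho^2(x^2+y^2)\bigr)/(1-\rho^2)$.

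Finally, restoring the prefactor $e^{-(x^2+y^2)/2}$ and putting the two pieces of the exponent over the common denominator $2(1-\rho^2)$ yields the symmetric form $\bigl(4\rho xy - (1+\rho^2)(x^2+y^2)\bigr)/\bigl(2(1-\rho^2)\bigr)$ on the right-hand side of \eqref{Eq: Mehler's formula}. The argument is classical in spirit, so the main obstacle is not conceptual but algebraic bookkeeping: the completion of the square must be carried out carefully so that the spurious $\rho^3 xy$ and $\rho^4 y^2$ terms actually cancel, and the prefactor must be merged with the Gaussian exponent without an algebraic slip. The analytic justifications — exchange of sum and integral, and the contour shift in the Gaussian integral — are routine once $|\rho| < 1$.
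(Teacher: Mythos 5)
The paper states Mehler's formula as a classical fact and does not supply a proof, so there is no in-paper argument to compare your proposal against. What you have written is a correct and standard derivation, and the key computations check out: the integral representation $H_{n}\left(y\right)=\frac{2^{n}}{\sqrt{\pi}}\int_{-\infty}^{\infty}\left(y+is\right)^{n}e^{-s^{2}}\,ds$ follows from Rodrigues' formula together with the Fourier--Gaussian identity exactly as you indicate; the exponent after invoking the generating function is $-\left(1-\rho^{2}\right)s^{2}+2i\rho\left(x-\rho y\right)s+2\rho xy-\rho^{2}y^{2}$; completing the square and evaluating the shifted Gaussian gives the factor $\left(1-\rho^{2}\right)^{-1/2}$; and the constant part of the exponent simplifies, after the $\rho^{3}$ and $\rho^{4}$ cross-terms cancel, to $\left(2\rho xy-\rho^{2}\left(x^{2}+y^{2}\right)\right)/\left(1-\rho^{2}\right)$, which combines with the prefactor $e^{-\left(x^{2}+y^{2}\right)/2}$ to yield $\left(4\rho xy-\left(1+\rho^{2}\right)\left(x^{2}+y^{2}\right)\right)/\left(2\left(1-\rho^{2}\right)\right)$. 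The interchange of sum and integral is legitimate since for $\left|\rho\right|<1$ the terms $\frac{\left|\rho\right|^{n}}{n!}\left|H_{n}\left(x\right)\right|\left|y+is\right|^{n}e^{-s^{2}}$ are dominated by $e^{\left|\rho\right|\left|y+is\right|\cdot C\left(x\right)}e^{-s^{2}}$ for an appropriate $C\left(x\right)$, which is integrable in $s$ and summable in $n$, so Fubini--Tonelli applies. The contour shift in the Gaussian integral is justified by analyticity and the rapid decay of $e^{-\left(1-\rho^{2}\right)z^{2}}$ in any horizontal strip. Your proof would be a reasonable addition if the paper wished to be self-contained on this point.
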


Similarly, put
\begin{equation*}
\begin{aligned}
L^{\pm}= -\frac{d^{2}}{dx^{2}} + x^{2} \pm 1
\end{aligned}
\end{equation*}
on $\dom L^{\pm} := \left\{ f\in L^{2}\left(\mathbb{R}\right): L^{\pm}f\in L^{2}\left(\mathbb{R}\right)\right\}$. Note that the eigenfuctions of both operators $L^{\pm}$ are again given by $\left\{\Phi_{N}\right\}_{N\in\mathbb{N}\cup\left\{0\right\}}$, but the eigenvalue corresponding to $\Phi_{N}$ is $2N+2$ for $L^{+}$ while is $2N$ for $L^{-}$. Thus, by the Mehler's  formula \eqref{Eq: Mehler's formula} with $\rho=e^{-2t}$, we obtain
\begin{equation}\label{Eq: Heat Kernel for perturbed Harmonic Oscillator (Function)}
\begin{aligned}
e^{-tL^{\pm}}\left(x,y\right)  
&= e^{\left(-1\mp 1\right)t}\sum_{N\in \mathbb{N}\cup\left\{0\right\}} e^{-2Nt}\Phi_{N}\left(x\right)\Phi_{N}\left(y\right) \\
&= e^{\left(-1\mp 1\right)t}\frac{1}{\pi^{\frac{1}{2}}} \sum_{N\in \mathbb{N}\cup\left\{0\right\}} e^{-2Nt} \frac{1}{2^{N}N!} H_{N}\left(x\right)H_{N}\left(y\right)e^{-\frac{x^{2}+y^{2}}{2}}\\
&= e^{\left(-1\mp 1\right)t} \frac{1}{\pi^{\frac{1}{2}}} \frac{1}{\sqrt{1-e^{-2t}}}\exp\left(\frac{4xy e^{-2t} - \left(1+e^{-4t}\right)\left(x^{2}+y^{2}\right)}{2\left(1-e^{-4t}\right)}\right).
\end{aligned}
\end{equation}

To write down the heat kernel explicitly, recall that for each $p\in \crit \left(f\right)$ and for each multi-index $I$, 
\begin{equation*}
\begin{aligned}
\Delta_{f,p}^{\left(r\right)}\left(g dx^{I}\right) 
= \left[ -\sum_{i=1}^{n} \frac{\partial^{2}}{\partial\left(x^{i}\right)^{2}} +\left(x^{i}\right)^{2} + \sum_{i=1}^{n}\varepsilon_{i}\varepsilon_{i}^{I}\right]  g dx^{I},
\end{aligned}
\end{equation*}
where $\varepsilon_{i},\varepsilon_{I}$ as indicated in \eqref{Eq: Local expression of Witten Laplacian under flat metric}. For each strictly increasing multi-index $I$ with $\left|I\right|=r$, define $\Delta_{f,p}^{I}:\dom \Delta_{f,p}^{I} \to \mathcal{C}^{\infty}\left(\mathbb{R}^{n}\right)$ by
\begin{equation*}
\begin{aligned}
\Delta_{f,p}^{I} g =  -\sum_{i=1}^{n} \frac{\partial^{2}g}{\partial\left(x^{i}\right)^{2}} +\left(x^{i}\right)^{2}g + \sum_{i=1}^{n}\varepsilon_{i}\varepsilon_{i}^{I}g,
\end{aligned}
\end{equation*}
where $\dom \Delta_{f,p}^{I}= \left\{ g\in L^{2}\left(\mathbb{R}^{n}\right): \Delta_{f,p}^{I}g\in L^{2}\left(\mathbb{R}^{n}\right)\right\}$. As we can see, $\Delta_{f,p}^{I}g$ is given by taking the $I$-coefficient from $\Delta_{f,p}^{\left(r\right)} \left(g dx^{I}\right)$ (Note that $\left\{dx^{I}\right\}_{I}$ is a global orthonormal frame for $\bigwedge^{r}T^{*}\mathbb{R}^{n}$); namely, 
\begin{equation*}
\begin{aligned}
\Delta_{f,p}^{I} g = \left(dx^{I}\right)^{*} \left( \Delta_{f,p}^{\left(r\right)}\left(g dx^{I}\right)\right)
\end{aligned}
\end{equation*}
for each $g\in \dom \Delta_{f,p}^{I}$. Moreover, $\Delta_{f,p}^{I}$ is self-adjoint and non-negative, so we can speak of its heat kernel $e^{-t\Delta_{f,p}^{I}}\left(x,y\right)\in \mathcal{C}^{\infty}\left(\mathbb{R}^{+}\times \mathbb{R}^{n}\times \mathbb{R}^{n}\right)$.

Set
\begin{equation*}
\begin{aligned}
L_{i}^{\pm}  -\frac{\partial^{2}}{\partial \left(x^{i}\right)^{2}}+\left(x^{i}\right)^{2} \pm 1.
\end{aligned}
\end{equation*}
and, we can write
\begin{equation}\label{Eq: Componentwise Flat Witten Laplacian}
\begin{aligned}
\Delta_{f,p}^{I} 
= \sum_{ \left\{i\in I:i \leq \ind_{f} p\right\} } L^{-}_{i}
+ \sum_{ \left\{i\in I:i > \ind_{f} p\right\} } L^{+}_{i} 
+ \sum_{ \left\{i\notin I:i \leq \ind_{f} p\right\} } L^{-}_{i}
+ \sum_{ \left\{i\notin I:i > \ind_{f} p\right\} } L^{+}_{i}.
\end{aligned}
\end{equation}
From the previous presented facts, we can see
\begin{equation*}
\begin{aligned}
\Phi_{N_{1},\cdots,N_{n}}\left(x^{1},\cdots,x^{n}\right)= \Phi_{N_{1}}\left(x^{1}\right)\cdots \Phi_{N_{n}}\left(x^{n}\right)
\end{aligned}
\end{equation*}
constitute the set of orthonormal eigenfunctions for $\Delta_{f,p}^{I}$, with respect to which the eigenvalue is 
\begin{equation*}
\begin{aligned}
&\lambda_{N_{1},\cdots,N_{n}}\\
&=\sum_{ \left\{i\in I:i \leq \ind_{f} p\right\}} 2N_{i} +\sum_{ \left\{i\in I:i > \ind_{f} p\right\} } \left( 2N_{i}+2\right) 
+ \sum_{ \left\{i\notin I:i \leq \ind_{f} p\right\} } 2N_{i} + \sum_{ \left\{i\notin I:i > \ind_{f} p\right\} }  \left( 2N_{i}+2\right).
\end{aligned}
\end{equation*}
Hence, put $x=\left(x^{1},\cdots,x^{n}\right), y=\left(y^{1},\cdots,y^{n}\right)$  and the heat kernel $e^{-t\Delta_{f,p}^{I}}$ can be given by
\begin{equation}\label{Eq: I,I component of model kernel}
\begin{aligned}
e^{-t\Delta_{f,p}^{I}}\left(x,y\right) 
&= \sum_{N_{1},\cdots,N_{n}\in \mathbb{N}} e^{-t\lambda_{N_{1},\cdots,N_{n}}} \Phi_{N_{1},\cdots N_{n}}\left(x\right) \Phi_{N_{1},\cdots,N_{n}}\left(y\right)\\
&= \prod_{ \left\{i\in I:i \leq \ind_{f} p\right\} } e^{-tL_{i}^{-}}\left(x^{i},y^{i}\right)
 \prod_{ \left\{i\in I:i > \ind_{f} p\right\} }  e^{-tL_{i}^{+}}\left(x^{i},y^{i}\right) \\
&\times  \prod_{ \left\{i\notin I:i \leq \ind_{f} p\right\} } e^{-tL_{i}^{+}}\left(x^{i},y^{i}\right)
 \prod_{ \left\{i\notin I:i > \ind_{f} p\right\} } e^{-tL_{i}^{-}}\left(x^{i},y^{i}\right),
\end{aligned}
\end{equation}
and each of the heat kernels $e^{-tL_{i}^{\pm}}\left(x^{i},y^{i}\right)$ can be written explicitly via \eqref{Eq: Heat Kernel for perturbed Harmonic Oscillator (Function)}.

\begin{prop}\label{Prop: Componentwise term}
For any two $x,y \in \mathbb{R}^{n}$, write
\begin{equation*}
\begin{aligned}
e^{-t\Delta_{f,p}^{\left(r\right)}}\left(x,y\right)
= \sideset{}{'}\sum_{I,J} e^{-t\Delta_{f,p}^{\left(r\right)}}\phantom{}_{I,J}\left(x,y\right) \left(dx^{I}\right)\left(x\right) \otimes \left(dx^{J}\right)^{*}\left(y\right).
\end{aligned}
\end{equation*}
Then
\begin{equation}\label{Eq: Componentwise term I=J}
\begin{aligned}
 e^{-t\Delta_{f,p}^{\left(r\right)}}\phantom{}_{I,I}\left(x,y\right) =e^{-t\Delta_{f,p}^{I}}\left(x,y\right).
\end{aligned}
\end{equation}
\end{prop}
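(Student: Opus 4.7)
The plan rests on a single structural observation: in the flat coordinates around $p$, the model Laplacian $\Delta_{f,p}^{(r)}$ decouples completely in the global orthonormal frame $\{dx^{I}\}$. Indeed, by the defining formula \eqref{Eq: Model Laplacian}, one has
\begin{equation*}
\Delta_{f,p}^{(r)}\bigl(g\,dx^{I}\bigr) = \bigl(\Delta_{f,p}^{I} g\bigr)\,dx^{I}
\end{equation*}
with no cross terms between different multi-indices. Consequently the heat semigroup $e^{-t\Delta_{f,p}^{(r)}}$ also acts diagonally in this frame, and $e^{-t\Delta_{f,p}^{(r)}}\bigl(g\,dx^{I}\bigr) = \bigl(e^{-t\Delta_{f,p}^{I}}g\bigr)\,dx^{I}$.

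Accordingly, I would test the kernel identity against forms of the type $g\,dx^{I}$ with $g\in\mathcal{C}_{c}^{\infty}(\mathbb{R}^{n})$. Writing out
\begin{equation*}
e^{-t\Delta_{f,p}^{(r)}}\bigl(g\,dx^{I}\bigr)(x) = \sideset{}{'}\sum_{J}\left(\int_{\mathbb{R}^{n}} e^{-t\Delta_{f,p}^{(r)}}\phantom{}_{J,I}(x,y)\,g(y)\,dy\right) dx^{J}(x),
\end{equation*}
while on the other hand the decoupling above together with the heat equation shows that the same object equals $F(t,x)\,dx^{I}$, where $F$ solves $\partial_{t}F + \Delta_{f,p}^{I}F = 0$ with $F(0,\cdot)=g$. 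By uniqueness for the harmonic-oscillator heat equation (guaranteed by the spectral theorem applied to the self-adjoint, non-negative operator $\Delta_{f,p}^{I}$ of tensor-product form \eqref{Eq: Componentwise Flat Witten Laplacian}), we must have $F(t,x) = \int_{\mathbb{R}^{n}} e^{-t\Delta_{f,p}^{I}}(x,y)\,g(y)\,dy$.

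Comparing the $dx^{I}$-coefficients and invoking the density of $\mathcal{C}_{c}^{\infty}(\mathbb{R}^{n})$ in $L^{2}(\mathbb{R}^{n})$ together with smoothness of the kernels yields the claim
\begin{equation*}
e^{-t\Delta_{f,p}^{(r)}}\phantom{}_{I,I}(x,y) = e^{-t\Delta_{f,p}^{I}}(x,y),
\end{equation*}
(and, as a by-product, the vanishing of the off-diagonal components, which is not needed here). There is no real obstacle: the only point meriting care is the uniqueness step, but since $\Delta_{f,p}^{I}$ is an essentially self-adjoint sum of one-dimensional perturbed harmonic oscillators whose heat kernel is explicitly given by products of \eqref{Eq: Heat Kernel for perturbed Harmonic Oscillator (Function)}, both sides are determined by the same spectral data and the identity is immediate.
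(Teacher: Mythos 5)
Your proposal is correct and takes essentially the same approach as the paper: both proofs rest on the observation that $\Delta_{f,p}^{(r)}$ acts diagonally on the global orthonormal frame $\{dx^{I}\}$, so that the $(I,I)$-component of the model kernel must be the heat kernel of the scalar operator $\Delta_{f,p}^{I}$, with the conclusion secured by a heat-equation uniqueness argument. The only difference is expository: the paper constructs the scalar integral operator $A_{I,I}(t)$ from the $(I,I)$-component of the kernel directly and verifies the heat equation and initial condition for it (never needing the off-diagonal vanishing), whereas you first invoke the spectral-theorem-backed decoupling of the semigroup so that $e^{-t\Delta_{f,p}^{(r)}}(g\,dx^{I})$ stays a multiple of $dx^{I}$ — a slightly stronger intermediate claim than strictly necessary, but perfectly valid.
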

\begin{proof}
Put $A_{I,I}\left(t\right):\mathcal{C}^{\infty}_{c}\left(\mathbb{R}^{n}\right)\to\mathcal{C}^{\infty}\left(\mathbb{R}^{n}\right)$
by
\begin{equation*}
\begin{aligned}
\left( A_{I,I}\left(t\right) g\right) \left(x\right)= \int_{\mathbb{R}^{n}}  e^{-t\Delta_{f,p}^{\left(r\right)}}\phantom{}_{I,I}\left(x,y\right) g\left(y\right) \ d y.
\end{aligned}
\end{equation*}

To prove \eqref{Eq: Componentwise term I=J}, it suffices to show $ A_{I,I}\left(t\right)g\in \dom \Delta_{f,p}^{I}$ and 
\begin{equation*}
\begin{aligned}
\left\{\begin{array}{l}
\frac{\partial}{\partial t} A_{I,I}\left(t\right)g +\Delta_{f,p}^{I} A_{I,I}\left(t\right) g=0\\
\lim_{t\to 0^{+}}\left\| A_{I,I}\left(t\right)g -g \right\|_{L^{2}\left(\mathbb{R}^{n}\right)}=0
\end{array}\right. 
\end{aligned}
\end{equation*}
for each $g\in \mathcal{C}^{\infty}_{c}\left(\mathbb{R}^{n}\right)$.

Note that
\begin{equation*}
\begin{aligned}
\left\|\Delta_{f,p}^{I} A_{I,I}\left(t\right) g \right\|_{L^{2}\left(\mathbb{R}^{n}\right)} 
&= \left\| \left(dx^{I}\right)^{*} \left( \Delta_{f,p}^{\left(r\right)} \left( A_{I,I}\left(t\right) g\right) dx^{I} \right) \right\|_{L^{2}\left(\mathbb{R}^{n}\right)} \\
&\leq \left\| \Delta_{f,p}^{\left(r\right)}e^{-t\Delta_{f,p}^{\left(r\right)}}\left(g dx^{I}\right)  \right\|_{L^{2}\left(\mathbb{R}^{n}\right)} < \infty
\end{aligned}
\end{equation*}
and 
\begin{equation*}
\begin{aligned}
\Delta_{f,p}^{I} A_{I,I}\left(t\right) g
&= \left(dx^{I}\right)^{*} \left[ \Delta_{f,p}^{\left(r\right)}  \left( \int_{\mathbb{R}^{n}} e^{-t\Delta_{f,p}^{\left(r\right)}}\left(x,y\right) \left( g\left(y\right) \ dx^{I}\right) dy\right)\right] \\ 
&= \left(dx^{I}\right)^{*} \left[ -\frac{\partial}{\partial t} \left( \int_{\mathbb{R}^{n}} e^{-t\Delta_{f,p}^{\left(r\right)}}\left(x,y\right) \left( g\left(y\right) \ dx^{I}\right) dy\right)\right] \\
&=-\frac{\partial}{\partial t} A_{I,I}\left(t\right) g
\end{aligned}
\end{equation*}
for each $g\in \mathcal{C}^{\infty}_{c}\left(\mathbb{R}^{n}\right)$. Since $\left\{dx^{I}\right\}_{I}$ is an orthonormal frame, we see that for each $g\in \mathcal{C}^{\infty}_{c}\left(\mathbb{R}^{n}\right)$,
\begin{equation*}
\begin{aligned}
& \left\| e^{-t\Delta_{f,p}^{\left(r\right)}}\left(g dx^{I}\right) - g dx^{I}\right\|_{L^{2}\left(\mathbb{R}^{n}\right)}^{2}\\
&\sideset{}{'}\sum_{K\neq I} \int_{\mathbb{R}^{n}}\left| \int_{\mathbb{R}^{n}} e^{-t\Delta_{f,p}^{r}}\phantom{}_{K,I}\left(x,y\right) g\left(y\right) dy\right|^{2} dx+ \int_{\mathbb{R}^{n}}\left|  \int_{\mathbb{R}^{n}} e^{-t\Delta_{f,p}^{r}}\phantom{}_{I,I}\left(x,y\right) g\left(y\right) dy - g\left(x\right) \right|^{2} dx
\to 0
\end{aligned}
\end{equation*}
as $t\to 0^{+}$, implying
\begin{equation*}
\begin{aligned}
\left\| A_{I,I}\left(t\right) g-g\right\|_{L^{2}\left(\mathbb{R}^{n}\right)}^{2} = \int_{\mathbb{R}^{n}}\left|  \int_{\mathbb{R}^{n}} e^{-t\Delta_{f,p}^{r}}\phantom{}_{I,I}\left(x,y\right) g\left(y\right) dy - g\left(x\right) \right|^{2} dx\to 0
\end{aligned}
\end{equation*}
as $t\to 0^{+}$. Finally, by using the fundamental theorem of Calculus, together with $\Delta_{f,p}^{I} A_{I,I}\left(t\right) g\in L^{2}\left(\mathbb{R}^{n}\right)$ as in the last part of proof of Theorem \ref{Main Thm: Semi-classical Heat Kernel Asymptotics}, we obtain $A_{I,I}\left(t\right)=e^{-t\Delta_{f,p}^{I}}$ in $\mathcal{C}^{\infty}_{c}\left(\mathbb{R}^{n}\right)$. Hence, we have furnished \eqref{Eq: Componentwise term I=J}.
\end{proof}

Proposition \ref{Prop: Componentwise term} shows that, under the global orthonormal frame $\left\{d x^{I}\right\}_{I}$, the diagonal entries of the model kernel $e^{-t\Delta_{f,p}^{\left(r\right)}}\left(x,y\right)$ as an linear transformation from $\bigwedge^{r} T_{x}^{*}\mathbb{R}^{n}$ to $\bigwedge^{r} T_{y}^{*}\mathbb{R}^{n}$ are nothing but $e^{-t\Delta_{f,p}^{I}}\left(x,y\right)$. That being said, we see that the trace of $e^{-t\Delta_{f,p}^{\left(r\right)}}\left(x,y\right)$ is given by
\begin{equation}\label{Eq: Trace of model kernel}
\begin{aligned}
\tr e^{-t\Delta_{f,p}^{\left(r\right)}}\left(x,y\right) 
= \sideset{}{'}\sum_{I} e^{-t\Delta_{f,p}^{\left(r\right)}}\phantom{}_{I,I}\left(x,y\right)
=  \sideset{}{'}\sum_{I} e^{-t\Delta_{f,p}^{I}}\left(x,y\right)
\end{aligned}
\end{equation}
and $e^{-t\Delta_{f,p}^{I}}\left(x,y\right)$ can be written as in \eqref{Eq: I,I component of model kernel}.

\begin{theo}\label{Thm: Trace integral for perturbed harmonic oscillator}
\begin{equation}\label{Eq: Trace integral for perturbed harmonic oscillator}
\begin{aligned}
\lim_{t\to \infty} \int_{\mathbb{R}^{n}} \tr e^{-t\Delta_{f,p}^{\left(r\right)}}\left(x,x\right) \ d V_{x} =\left\{ \begin{array}{ll}
1 &\text{, if $r=\ind_{f} p$}\\
0 &\text{, otherwise}
\end{array} \right. .
\end{aligned}
\end{equation}
\end{theo}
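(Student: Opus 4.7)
The plan is to reduce the trace integral to a finite sum of products of one-dimensional integrals using the decomposition of $\Delta_{f,p}^{(r)}$ already established in the paper, and then analyze the exponential behavior of each factor as $t\to\infty$ via the explicit Mehler formula.

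First, by \eqref{Eq: Trace of model kernel} and \eqref{Eq: I,I component of model kernel}, I would write
\begin{equation*}
\int_{\mathbb{R}^{n}} \tr e^{-t\Delta_{f,p}^{(r)}}(x,x)\,dV_{x}
= \sideset{}{'}\sum_{|I|=r} \int_{\mathbb{R}^{n}} e^{-t\Delta_{f,p}^{I}}(x,x)\,dV_{x},
\end{equation*}
and by Fubini applied to the product factorization into the one-dimensional kernels $e^{-tL_{i}^{\pm}}(x^{i},x^{i})$, each summand becomes a product of $n$ one-variable integrals. Using \eqref{Eq: Heat Kernel for perturbed Harmonic Oscillator (Function)} together with the identity $\tfrac{(1-e^{-2t})^{2}}{1-e^{-4t}} = \tanh t$, a direct Gaussian computation yields
\begin{equation*}
\int_{\mathbb{R}} e^{-tL^{-}}(x,x)\,dx = \frac{1}{1-e^{-2t}},
\qquad
\int_{\mathbb{R}} e^{-tL^{+}}(x,x)\,dx = \frac{e^{-2t}}{1-e^{-2t}}.
\end{equation*}
Alternatively, these follow immediately from the spectral expansion $\sum_{N\ge 0}e^{-t\lambda_N}$ with $\lambda_N = 2N$ and $\lambda_N = 2N+2$.

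Next, setting $l := \ind_{f} p$ and decoding the product \eqref{Eq: I,I component of model kernel}, I would let
\begin{equation*}
a = |I\cap\{1,\dots,l\}|,\quad b=|I\cap\{l+1,\dots,n\}|,\quad c=l-a,\quad d=(n-l)-b,
\end{equation*}
so $I$ contributes $L^{-}$ for the $a$-many and $d$-many indices, and $L^{+}$ for the $b$-many and $c$-many indices. Multiplying, I obtain
\begin{equation*}
\int_{\mathbb{R}^{n}} e^{-t\Delta_{f,p}^{I}}(x,x)\,dV_{x}
= \frac{e^{-2t(b+c)}}{(1-e^{-2t})^{n}}.
\end{equation*}
As $t\to\infty$, the denominator tends to $1$, so the integral tends to $0$ unless $b+c=0$. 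The condition $b=0$ forces $I\subset\{1,\dots,l\}$, and $c=0$ forces $a=l$; hence $b+c=0$ if and only if $I=\{1,\dots,l\}$, which requires $r=l$. In that singular case the integral tends to $1$.

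Finally, summing over the (finite) collection of strictly increasing multi-indices $I$ with $|I|=r$ and interchanging the limit with this finite sum, I conclude that the total limit equals $1$ if $r=\ind_{f}p$ and $0$ otherwise, establishing \eqref{Eq: Trace integral for perturbed harmonic oscillator}. The main bookkeeping obstacle is the combinatorial identification of the unique surviving index $I$, but no analytic subtlety arises since everything is explicit and the sum is finite.
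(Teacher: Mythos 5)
Your proof is correct, and it follows the same overall strategy as the paper's: factor the $I$-th diagonal component of the model kernel into one-dimensional factors via \eqref{Eq: I,I component of model kernel}, evaluate (or limit) the one-dimensional trace integrals, and then identify which $I$ survive the $t\to\infty$ limit. The one genuine difference is in the middle step: the paper computes only $\lim_{t\to\infty}\int_{\mathbb{R}}e^{-tL_i^\pm}(x,x)\,dx$ (getting $0$ and $1$ respectively) by noting a pointwise limit of the kernel and invoking the Lebesgue dominated convergence theorem, whereas you derive the exact closed forms $\int e^{-tL^-}(x,x)\,dx = (1-e^{-2t})^{-1}$ and $\int e^{-tL^+}(x,x)\,dx = e^{-2t}(1-e^{-2t})^{-1}$ (either by the Gaussian computation via $\tanh t$, or immediately from the spectral sum $\sum_N e^{-t\lambda_N}$). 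This eliminates the DCT step entirely and gives the clean global identity $\int e^{-t\Delta_{f,p}^I}(x,x)\,dx = e^{-2t(b+c)}/(1-e^{-2t})^n$, so the limit is transparent. Your combinatorial bookkeeping with $a,b,c,d$ is also tighter than the paper's casework (which separately treats $r=\ind_f p$ and $r\neq\ind_f p$, splitting the latter into two subcases): you observe in one stroke that $b+c=0$ iff $I=\{1,\dots,l\}$ iff $r=\ind_f p$. Both approaches are correct; yours is the more explicit one. One small caveat worth flagging: the paper's displayed formula \eqref{Eq: Componentwise Flat Witten Laplacian} has the $L^\pm$ assignments for $i\notin I$ transposed relative to \eqref{Eq: I,I component of model kernel} (a typo in the paper); your assignment agrees with \eqref{Eq: I,I component of model kernel} and with the definition of $\varepsilon_i\varepsilon_i^I$, which is the correct one.
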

\begin{proof}

By \eqref{Eq: Trace of model kernel}, \eqref{Eq: I,I component of model kernel}, and the Fubini theorem, the trace integral in question is determined by the (trace) integrals of $e^{-tL_{i}^{\pm}}\left(x,y\right)$: 
\begin{equation*}
\begin{aligned}
\lim_{t\to \infty}\int_{\mathbb{R}}  e^{-tL_{i}^{\pm}}\left(x^{i},x^{i}\right) \ dx^{i}.
\end{aligned}
\end{equation*}

By \eqref{Eq: Heat Kernel for perturbed Harmonic Oscillator (Function)}, we can write
\begin{equation*}
\begin{aligned}
e^{-tL_{i}^{\pm}}\left(x^{i},x^{i}\right)  
= e^{\left(-1\mp 1\right)t} \frac{1}{\pi^{\frac{1}{2}}} \frac{1}{\sqrt{1-e^{-2t}}}\exp\left(\frac{4\left(x^{i}\right)^{2}e^{-2t} - 2\left(1+e^{-4t}\right)\left(x^{i}\right)^{2}}{2\left(1-e^{-4t}\right)}\right).
\end{aligned}
\end{equation*}
Since $e^{-t}\to 0$ as $t\to \infty$, $e^{-tL_{i}^{\pm}}\left(x^{i},x^{i}\right)$ are both bounded by an integrable function. Also, we can see
\begin{equation*}
\begin{aligned}
\lim_{t\to \infty} e^{-tL_{i}^{+}}\left(x^{i},x^{i}\right) = 0
\end{aligned}
\end{equation*}
and
\begin{equation*}
\begin{aligned}
\lim_{t\to \infty}  e^{-tL_{i}^{-}}\left(x^{i},x^{i}\right) = \frac{1}{\pi^{\frac{1}{2}}} e^{-\left(x^{i}\right)^{2}}.
\end{aligned}
\end{equation*}
Thus, by the Lebesgue dominated convergence theorem, we obtain
\begin{equation*}
\begin{aligned}
\lim_{t\to \infty}\int_{\mathbb{R}} e^{-tL_{i}^{+}}\left(x^{i},x^{i}\right) \ dx^{i} = 0
\end{aligned}
\end{equation*}
and
\begin{equation*}
\begin{aligned}
\lim_{t\to \infty}\int_{\mathbb{R}} e^{-tL_{i}^{-}}\left(x^{i},x^{i}\right) \ dx^{i} = 1.
\end{aligned}
\end{equation*}

Now, to see \eqref{Eq: Trace integral for perturbed harmonic oscillator}, if $r=\ind_{f} p$, choose $I_{0}=\left(1,\cdots,r\right)$ and we obtain
\begin{equation*}
\begin{aligned}
e^{-t\Delta_{f,p}^{I_{0}}}\left(x,x\right) = \prod_{i=1}^{n} e^{-tL_{i}^{-}}\left(x^{i},x^{i}\right),
\end{aligned}
\end{equation*} 
in which case, we deduce
\begin{equation*}
\begin{aligned}
\lim_{t\to \infty}\int_{\mathbb{R}^{n}} e^{-t\Delta_{f,p}^{I_{0}}}\left(x,x\right)  dx = 1.
\end{aligned}
\end{equation*}
For each of the other strictly increasing indices $I\neq I_{0}$, there must exists $s\in I$ such that $s>r=\ind_f p$, implying, from \eqref{Eq: I,I component of model kernel}, $e^{-t\Delta_{f,p}^{I}}\left(x,x\right)$ contains the kernel $e^{-tL_{s}^{+}}\left(x^{s},x^{s}\right)$. This leads to
\begin{equation*}
\begin{aligned}
\lim_{t\to \infty}\int_{\mathbb{R}^{n}} e^{-t\Delta_{f,p}^{I}}\left(x,x\right) dx =0.
\end{aligned}
\end{equation*}
Hence, we conclude $\lim_{t\to \infty} \int_{\mathbb{R}^{n}} \tr e^{-t\Delta_{f,p}^{\left(r\right)}}\left(x,x\right) dx = 1$ if $r=\ind_{f} p$.

If $r\neq \ind_{f} p$, we discuss in the two cases for strictly increasing indices $I$: $I$ whose elements are lower or equal to $\ind_{f} p$ and the others. For each strictly increasing index $I$ whose elements are lower or equal to $\ind_{f} p$, there must exist $s\notin I$ such that $s\leq \ind_{f} p$.  This implies  $e^{-t\Delta_{f,p}^{I}}\left(x,x\right)$ contains $e^{-tL_{s}^{+}}\left(x^{s},x^{s}\right)$, resulting in $\lim_{t\to \infty}\int_{\mathbb{R}^{n}} e^{-t\Delta_{f,p}^{I}}\left(x,x\right) dx =0$. For each of the other strictly increasing indices $I$, $I$ must contains at least one element $s$ such that $s>\ind_{f} p$. This implies again  $e^{-t\Delta_{f,p}^{I}}\left(x,x\right)$ contains $e^{-tL_{s}^{+}}\left(x^{s},x^{s}\right)$ and so $\lim_{t\to \infty}\int_{\mathbb{R}^{n}} e^{-t\Delta_{f,p}^{I}}\left(x,x\right) dx =0$. Therefore, we conclude $\lim_{t\to \infty} \int_{\mathbb{R}^{n}} \tr e^{-t\Delta_{f,p}^{\left(r\right)}}\left(x,x\right) dx = 0$ if $r\neq \ind_{f} p$. Hence, we have furnished Theorem \ref{Thm: Trace integral for perturbed harmonic oscillator}.
\end{proof}

We will see in a moment that Theorem \ref{Thm: Trace integral for perturbed harmonic oscillator} plays a central role in tackling the trace integral of  $e^{-\frac{t}{k}\Delta_{k}^{\left(r\right)}}\left(x,y\right)$.

\subsection{Heat Kernel Proof} In this subsection, we give our heat kernel proof of the Morse inequalities. The essence of our proof is that our main results (Theorems \ref{Main Thm: Semi-classical Heat Kernel Asymptotics}, \ref{Main Thm: Asymptotic Behavior of Scaled Heat Kernel in between}, and \ref{Main Thm: Asymptotic Behavior of Heat Kernel outside Critical Points}) allow us to see as $k\to \infty$, the trace integral of $e^{-\frac{t}{k}\Delta_{k}^{\left(r\right)}}\left(x,y\right)$ is approximately close to the sum of the trace integral of the model kernels, which further converges to the Morse number $m_{r}$ by Theorem \ref{Thm: Trace integral for perturbed harmonic oscillator} as $t\to \infty$. In other words, we can deduce the following result regarding the trace integral in question based upon our main results:

\begin{theo}\label{Main Thm: Critical point info captured by the asymptotic behaviors}
\begin{equation}\label{Eq: Critical point info captured by the asymptotic behaviors}
\begin{aligned}
\lim_{t\to \infty} \lim_{k\to \infty} \int_{M} \tr e^{-\frac{t}{k}\Delta_{k}^{\left(r\right)}}\left(x,x\right) \ d V = m_{r}
\end{aligned}
\end{equation}
for each $r$. 
\end{theo}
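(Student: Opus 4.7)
The strategy is to decompose $M$ into a small neighborhood $\mathcal{U}^{k}$ of $\crit(f)$ and its complement, then on each critical chart to rescale by $y=\sqrt{k}\,x$ and further split into a fixed ball plus a large annulus, controlling the three resulting pieces via Theorems \ref{Main Thm: Asymptotic Behavior of Heat Kernel outside Critical Points}, \ref{Main Thm: Semi-classical Heat Kernel Asymptotics}, and \ref{Main Thm: Asymptotic Behavior of Scaled Heat Kernel in between}, respectively. A final appeal to Theorem \ref{Thm: Trace integral for perturbed harmonic oscillator} converts the $t\to\infty$ limit into the Morse count $m_{r}$.

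Concretely, I would first write
\begin{equation*}
\int_{M}\tr e^{-\frac{t}{k}\Delta_{k}^{(r)}}(x,x)\, dV
=\sum_{p\in\crit(f)}\int_{U_{p}^{k}}\tr e^{-\frac{t}{k}\Delta_{k}^{(r)}}(x,x)\, dV
+\int_{M\setminus\mathcal{U}^{k}}\tr e^{-\frac{t}{k}\Delta_{k}^{(r)}}(x,x)\, dV.
\end{equation*}
Since $M$ has finite volume and the trace is bounded above by a dimensional multiple of the operator norm, Theorem \ref{Main Thm: Asymptotic Behavior of Heat Kernel outside Critical Points} shows the second term is $O(k^{-N})$ for every $N$, hence vanishes as $k\to\infty$. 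For each $p\in\crit(f)$, the flatness of the metric on $U_{p}$ (Theorem \ref{Thm: Locally Flat Metric}) legitimizes the change of variables $x=y/\sqrt{k}$, so that
\begin{equation*}
\int_{U_{p}^{k}}\tr e^{-\frac{t}{k}\Delta_{k}^{(r)}}(x,x)\, dV
=\int_{B_{k^{\varepsilon}}(0)}\tr A_{(k),p}^{r}(t,y,y)\, dy.
\end{equation*}

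Now I fix $D>1$ large enough for Theorem \ref{Main Thm: Asymptotic Behavior of Scaled Heat Kernel in between} to apply and split this integral over $B_{2D}(0)$ and over the annulus $B_{k^{\varepsilon}}(0)\setminus B_{2D}(0)$. On the bounded piece, Theorem \ref{Main Thm: Semi-classical Heat Kernel Asymptotics} gives uniform $\mathcal{C}^{0}$ convergence of $\tr A_{(k),p}^{r}(t,y,y)$ to $\tr e^{-t\Delta_{f,p}^{(r)}}(y,y)$, whence the integral passes to the limit. On the annulus, Theorem \ref{Main Thm: Asymptotic Behavior of Scaled Heat Kernel in between} provides the uniform bound $|A_{(k),p}^{r}(t,y,y)|_{y}\le C(t,N)|y|^{-N}$; choosing $N>n$ supplies an integrable dominant on $\{|y|\ge 2D\}$, so by dominated convergence
\begin{equation*}
\lim_{k\to\infty}\int_{B_{k^{\varepsilon}}(0)}\tr A_{(k),p}^{r}(t,y,y)\, dy
=\int_{\mathbb{R}^{n}}\tr e^{-t\Delta_{f,p}^{(r)}}(y,y)\, dy.
\end{equation*}
Summing over $p$ and invoking Theorem \ref{Thm: Trace integral for perturbed harmonic oscillator} as $t\to\infty$ leaves precisely the number of critical points with $\ind_{f} p=r$, which is $m_{r}$.

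The principal obstacle is the passage to the limit over the expanding domain $B_{k^{\varepsilon}}(0)$: the pointwise convergence from Theorem \ref{Main Thm: Semi-classical Heat Kernel Asymptotics} is only local, so without the tail bound of Theorem \ref{Main Thm: Asymptotic Behavior of Scaled Heat Kernel in between} mass could in principle escape to infinity under the scaling. A minor bookkeeping point is that the trace of the endomorphism $A_{(k),p}^{r}(t,y,y)$ is controlled by a dimensional constant times $|A_{(k),p}^{r}(t,y,y)|_{y}$, which lets Theorem \ref{Main Thm: Asymptotic Behavior of Scaled Heat Kernel in between} be applied directly to the trace.
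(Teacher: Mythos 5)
Your proposal is correct and takes essentially the same approach as the paper: both decompose the trace integral into a scaled near-diagonal piece over $B_{2D}(0)$, a scaled annular piece over $B_{k^{\varepsilon}}(0)\setminus B_{2D}(0)$, and the complement $M\setminus\mathcal{U}^{k}$, handling them with Theorems \ref{Main Thm: Semi-classical Heat Kernel Asymptotics}, \ref{Main Thm: Asymptotic Behavior of Scaled Heat Kernel in between}, and \ref{Main Thm: Asymptotic Behavior of Heat Kernel outside Critical Points} respectively (dominated convergence for the latter two), then conclude via Theorem \ref{Thm: Trace integral for perturbed harmonic oscillator}. The only cosmetic difference is that you change variables before splitting the neighborhood of each critical point, whereas the paper defines $\mathcal{D}^{k}=\bigcup_{p}\varphi_{p}^{-1}(B_{2Dk^{-1/2}}(0))$ and splits first.
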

\begin{proof}
Let $D$ be the positive number given in Theorem \ref{Main Thm: Asymptotic Behavior of Scaled Heat Kernel in between}. For each $p\in \crit\left(f\right)$, put $D_{p}^{k}=\varphi_{p}^{-1}\left(B_{2Dk^{-\frac{1}{2}}}\left(0\right)\right)$ and $\mathcal{D}^{k}=\bigcup_{p\in \crit \left(f\right)} D_{p}^{k}$. Moreover, let $\mathcal{U}^{k}$ be as given in Theorem \ref{Main Thm: Asymptotic Behavior of Heat Kernel outside Critical Points}. Note that
\begin{equation}\label{Eq: Split of trace integral}
\begin{aligned}
&\int_{M} \tr e^{-\frac{t}{k}\Delta_{k}^{\left(r\right)}}\left(x,x\right) dV\\
&=\underbrace{\int_{\mathcal{D}^{k}} \tr e^{-\frac{t}{k}\Delta_{k}^{\left(r\right)}}\left(x,x\right) dV}_{(A)}
+ \underbrace{\int_{\mathcal{U}^{k}\setminus \mathcal{D}^{k}} \tr e^{-\frac{t}{k}\Delta_{k}^{\left(r\right)}}\left(x,x\right) dV}_{(B)}
+\underbrace{\int_{M\setminus \mathcal{U}^{k}} \tr e^{-\frac{t}{k}\Delta_{k}^{\left(r\right)}}\left(x,x\right) dV}_{(C)}.
\end{aligned}
\end{equation} 

By a change of variable, we see that
\begin{equation*}
\begin{aligned}
(A)
= \sum_{p\in \crit\left(f\right)}\int_{B_{2D}\left(0\right)} \tr A_{\left(k\right),p}^{r}\left(t,x,x\right) dx.
\end{aligned}
\end{equation*}
By Theorem \ref{Main Thm: Semi-classical Heat Kernel Asymptotics}, we obtain
\begin{equation}\label{Eq: Trace integral A}
\begin{aligned}
\lim_{k\to \infty} (A) = \sum_{p\in \crit\left(f\right)}\int_{B_{2D}\left(0\right)} \tr e^{-t\Delta_{f,p}^{\left(r\right)}}\left(x,x\right) dx.
\end{aligned}
\end{equation}

Similarly, we obtain
\begin{equation*}
\begin{aligned}
(B) = \sum_{p\in \crit\left(f\right)}\int_{\mathbb{R}^{n}} \mathbf{1}_{B_{k^{\varepsilon}}\left(0\right)\setminus B_{2D}\left(0\right)}\tr A_{\left(k\right),p}^{r}\left(t,x,x\right) dx,
\end{aligned}
\end{equation*}
where $\mathbf{1}_{B_{k^{\varepsilon}}\left(0\right)\setminus B_{2D}\left(0\right)}$ is the characteristic function of $B_{k^{\varepsilon}}\left(0\right)\setminus B_{2D}\left(0\right)$. Now, Theorem \ref{Main Thm: Asymptotic Behavior of Scaled Heat Kernel in between} indicates that the integrand is bounded above by an integrable function $\left|x\right|^{-N}$ with large $N$; namely, 
\begin{equation*}
\begin{aligned}
\left| \mathbf{1}_{B_{k^{\varepsilon}}\left(0\right)\setminus B_{2D}\left(0\right)}\tr A_{\left(k\right),p}^{r}\left(t,x,x\right) \right| \leq \frac{C_{1}}{\left|x\right|^{N}},
\end{aligned}
\end{equation*}
where $C_{1}$ is independent of $D, x,k$, for each $x\in \mathbb{R}^{n}$ and for large $k$. Hence, by Theorem \ref{Main Thm: Semi-classical Heat Kernel Asymptotics} and Lebesgue dominated convergence theorem, we retrieve
\begin{equation}\label{Eq: Trace integral B}
\begin{aligned}
\lim_{k\to \infty} (B) = \sum_{p\in \crit\left(f\right)}\int_{\mathbb{R}^{n}\setminus B_{2D}\left(0\right)} e^{-t\Delta_{f,p}^{\left(r\right)}}\left(x,x\right) dx.
\end{aligned}
\end{equation}

To deal with (C), choose a pair $\left(\mathcal{V},\mathcal{P},\mathcal{E}\right)$ so that the $\mathcal{C}^{0}$-norm is defined on the compact manifold $M$. Using the partition of unity, we can rewrite (C) as the finite sum:
\begin{equation*}
\begin{aligned}
(C) 
=\sum_{\psi \in \mathcal{P}} \int_{M} \mathbf{1}_{\supp \psi \cap M\setminus \mathcal{U}^{k}}\cdot \psi \tr e^{-\frac{t}{k}\Delta_{k}^{\left(r\right)}}\left(x,x\right) dV.
\end{aligned}
\end{equation*}
By Theorem \ref{Main Thm: Asymptotic Behavior of Heat Kernel outside Critical Points}, choose a positive integer $N\in \mathbb{N}$ and we see that
\begin{equation*}
\begin{aligned}
\left| \mathbf{1}_{\supp \psi \cap M\setminus \mathcal{U}^{k}}\cdot \psi \tr e^{-\frac{t}{k}\Delta_{k}^{\left(r\right)}}\left(x,x\right) \right| \leq \frac{C_{2}\left(t\right)}{k^{N}} \to 0, \text{as $k\to \infty$}
\end{aligned}
\end{equation*}
where $C_{2}\left(t\right)$ depends on $t$ but is independent of $k$ and $\psi\in \mathcal{P}$, for each $x\in M$. Hence, by Lebesgue dominated convergence theorem, we conclude
\begin{equation}\label{Eq: Trace integral C}
\begin{aligned}
\lim_{k\to \infty} (C) = 0. 
\end{aligned}
\end{equation}

Therefore, by \eqref{Eq: Split of trace integral}, \eqref{Eq: Trace integral A}, \eqref{Eq: Trace integral B}, and \eqref{Eq: Trace integral C}, we obtain
\begin{equation*}
\begin{aligned}
\lim_{k\to \infty} \int_{M} \tr e^{-\frac{t}{k}\Delta_{k}^{\left(r\right)}}\left(x,x\right) dV = \int_{\mathbb{R}^{n}} \tr e^{-t\Delta_{f,p}^{\left(r\right)}}\left(x,x\right) dx.
\end{aligned}
\end{equation*}
Finally, Theorem \ref{Thm: Trace integral for perturbed harmonic oscillator} gives
\begin{equation*}
\begin{aligned}
\lim_{t\to \infty}\lim_{k\to \infty} \int_{M} \tr e^{-\frac{t}{k}\Delta_{k}^{\left(r\right)}}\left(x,x\right) dV = m_{r}.
\end{aligned}
\end{equation*}
Hence, we have established Theorem \ref{Main Thm: Critical point info captured by the asymptotic behaviors}. 

\end{proof}

Morse inequalities now follows from by Lemma \ref{Thm: local index theory for witten laplacian} and Theorem \ref{Main Thm: Critical point info captured by the asymptotic behaviors}. 

\section*{Acknowledgement}
This paper is adapted from the author's master thesis. The author would like to thank deeply to Professor Chin-Yu Hsiao for his kind, dedicated supervision.

\bibliography{sampartb}

\end{document}